\renewcommand{\email}[2][]{%
  \ifx\emails\@empty\relax\else{\g@addto@macro\emails{,\space}}\fi%
  \@ifnotempty{#1}{\g@addto@macro\emails{\textrm{(#1)}\space}}%
  \g@addto@macro\emails{#2}%
}
\newcommand{\keep}[1]{{\binoppenalty=10000\relpenalty=10000 #1}}
\newcommand{\tops}[2]{\texorpdfstring{#1}{#2}}
\newcommand{\<}{\begin{equation}}
\renewcommand{\>}{\end{equation}}
\renewcommand{\epsilon}{\varepsilon}
\def\Xint#1{\mathchoice
   {\XXint\displaystyle\textstyle{#1}}%
   {\XXint\textstyle\scriptstyle{#1}}%
   {\XXint\scriptstyle\scriptscriptstyle{#1}}%
   {\XXint\scriptscriptstyle\scriptscriptstyle{#1}}%
   \!\int}
\def\XXint#1#2#3{{\setbox0=\hbox{$#1{#2#3}{\int}$}
     \vcenter{\hbox{$#2#3$}}\kern-.5\wd0}}
\def\dint{\Xint-}
\newcommand{\ldint}[1]{\dint_{#1-i\infty}^{#1+i\infty}}
\renewcommand{\Re}{\operatorname{Re}}
\let\originalleft\left
\let\originalright\right
\renewcommand{\left}{\mathopen{}\mathclose\bgroup\originalleft}
\renewcommand{\right}{\aftergroup\egroup\originalright}
\newcommand{\nfrac}[2]{#1/#2}
\newcommand{\pfrac}[2]{\left(\frac{#1}{#2}\right)}
\newcommand{\pf}[2]{\left(\frac{#1}{#2}\right)}
\newcommand{\pfr}[2]{\left(\frac{#1}{#2}\right)}
\newcommand{\ptfr}[2]{(\tfrac{#1}{#2})}
\newcommand{\fr}[2]{\frac{#1}{#2}}
\newcommand{\nfr}[2]{#1/#2}
\newcommand{\tf}[2]{\tfrac{#1}{#2}}
\newcommand{\tfr}[2]{\tfrac{#1}{#2}}
\let\midold\mid
\let\nmidold\nmid
\renewcommand{\mid}{\hspace{-0.2em}\midold\hspace{-0.2em}}
\renewcommand{\nmid}{\hspace{-0.15em}\nmidold\hspace{-0.15em}}
\newcommand{\ssmid}{|}
\newcommand{\ssnmid}{\hspace{0.1em}\nmidold\hspace{0.1em}}
\let\mod\undefined
\newcommand{\mod}[1]{\,(#1)}
\newcommand{\mmod}[1]{\,\,(\mathrm{mod}\,\,#1)}
\newcommand{\lf}{\left}
\newcommand{\rh}{\right}
\newtheorem{theorem}{Theorem}
\newtheorem{proposition}[theorem]{Proposition}
\newtheorem{lemma}[theorem]{Lemma}
\newtheorem{corollary}[theorem]{Corollary}
\theoremstyle{definition}
\newtheorem{remark}[theorem]{Remark}
\numberwithin{theorem}{section}
\numberwithin{equation}{section}
\numberwithin{figure}{section}
\DeclareMathSymbol{\mdot}{\mathord}{symbols}{"01}
\newcommand{\expp}[1]{\exp\left( #1 \right)}
\newcommand{\expb}[1]{\exp\left[ #1 \right]}
\newcommand{\e}[1]{e\left(#1\right)}
\newcommand{\floor}[1]{\lfloor#1\rfloor}
\newcommand{\pth}[1]{\left(#1\right)}
\newcommand{\ABS}[1]{\left|#1\right|}
\renewcommand{\O}[1]{O\left(#1\right)}
\newcommand{\rt}[1]{\sqrt{#1}}
\newcommand{\logX}{\log X}
\newcommand{\logx}{\log x}
\newcommand{\signs}{\{0,\pm1\}}
\newcommand{\less}{\ll}
\newcommand{\great}{\gg}
\newcommand{\cc}{\mathbb{C}}
\newcommand{\nn}{\mathbb{N}}
\newcommand{\rr}{\mathbb{R}}
\newcommand{\zz}{\mathbb{Z}}
\newcommand{\fa}{\mathfrak{a}}
\newcommand{\fb}{\mathfrak{b}}
\newcommand{\fc}{\mathfrak{c}}
\newcommand{\fd}{\mathfrak{d}}
\newcommand{\fm}{\mathfrak{m}}
\newcommand{\fp}{\mathfrak{p}}
\newcommand{\fA}{\mathscr{A}} 
\newcommand{\fM}{\mathfrak{M}}
\newcommand{\fP}{\mathfrak{P}}
\newcommand{\fS}{\mathfrak{S}}
\newcommand{\fT}{\mathfrak{T}}
\newcommand{\xa}{\alpha}
\newcommand{\xb}{\beta}
\newcommand{\xc}{\chi}
\renewcommand{\xd}{\delta} 
\newcommand{\xe}{\epsilon}
\newcommand{\xf}{\phi}
\newcommand{\xg}{\gamma}
\newcommand{\xk}{\kappa}
\newcommand{\xl}{\lambda}
\newcommand{\xm}{\mu}
\newcommand{\xn}{\nu}
\newcommand{\xo}{\omega}
\newcommand{\xq}{\theta}
\newcommand{\xr}{\rho}
\newcommand{\xs}{\sigma}
\newcommand{\xt}{\tau}
\newcommand{\xvf}{\varphi}
\newcommand{\xy}{\psi}
\newcommand{\xz}{\zeta}
\newcommand{\xG}{\Gamma}
\newcommand{\xD}{\Delta}
\newcommand{\xF}{\Phi}
\newcommand{\xL}{\Lambda}
\newcommand{\xQ}{\Theta}
\newcommand{\xY}{\Psi}
\newcommand{\cQ}{\mathcal{Q}}
\newcommand{\cR}{\mathcal{R}}
\newcommand{\cX}{\mathcal{X}}
\renewcommand{\l}{\ell}
\def\centerarc[#1](#2)(#3:#4:#5)
\def\section{%
    \@startsection{section}{1}%
    \z@{.7\linespacing\@plus\linespacing}{.5\linespacing}%
    {\normalfont\large\bfseries}%
}
\def\@seccntformat#1{%
  \protect\textup{\protect\@secnumfont
    \csname the#1\endcsname
\space\space
  }%
}
   \def\MR#1{}
\begin{document}

\title{Legendre-signed partition numbers} 
\author[T.\ Daniels]{Taylor Daniels}
\address{Dept.\ of Mathematics, Purdue Univ., 150 N University St, W Lafayette, IN 47907}
\email{daniel84@purdue.edu}
\subjclass[2020]{Primary: 11P82, 11P55. \\ \indent \emph{Keywords and phrases}: partitions, Legendre symbol, multiplicative functions.}
\begin{abstract}
    Let $f:\mathbb{N}\to\{0,\pm1\}$, for $n \in \mathbb{N}$ let $\Pi[n]$ be the set of partitions of $n$, and for all partitions $\pi = (a_1,a_2,\ldots,a_k) \in \Pi[n]$ let
    \[
        f(\pi) := f(a_1)f(a_2) \cdots f(a_k).
    \]
    With this we define the $f$-\emph{signed partition numbers}
    \[
        \mathfrak{p}(n,f) = \sum_{\pi\in\Pi[n]} f(\pi).
    \]
    
    In this paper, for odd primes $p$ we derive asymptotic formulae for $\mathfrak{p}(n,\chi_p)$ as $n\to\infty$, where $\chi_p(n)$ is the Legendre symbol $(\frac{n}{p})$ associated to $p$. A similar asymptotic formula for $\mathfrak{p}(n,\chi_2)$ is also established, where $\chi_2(n)$ is the Kronecker symbol $(\frac{n}{2})$. Special attention is paid to the sequence $(\mathfrak{p}(n,\chi_5))_\mathbb{N}$, and a formula for $\mathfrak{p}(n,\chi_5)$ supporting the recent discovery that $\mathfrak{p}(10j+2,\chi_5)=0$ for all $j\geq 0$ is discussed. As a corollary, our main results imply that the periodic vanishing displayed by $(\mathfrak{p}(n,\chi_5))_\mathbb{N}$ does not occur in any sequence $(\mathfrak{p}(n,\chi_p))_\mathbb{N}$ for $p \neq 5$ such that $p\not\equiv 1\,\,(\mathrm{mod}\,8)$.

    In addition, work of Montgomery and Vaughan on exponential sums with multiplicative coefficients is applied to establish a uniform upper bound on certain doubly infinite series involving multiplicative functions $f$ with $|f| \leq 1$.
\end{abstract}

\maketitle

\section{Introduction}
\label{sec:intro}
The \emph{partitions} of a given $n \in \nn$ are the tuples $(a_1,\ldots,a_k)$ of positive integers such that $a_1+\cdots+a_k=n$ and $a_1 \geq \cdots a_k$. For a fixed $f:\nn\to\signs$ and any partition $\pi = (a_1,a_2,\ldots,a_k)$ of any $n \in \nn$, let
    \[
        f(\pi) := f(a_1)f(a_2)\cdots f(a_k).
    \]
With this we define the $f$-\emph{signed partition numbers}
    \<
        \label{eq:fpDef}
        \fp(n,f) = \sum_{\pi\in\Pi[n]} f(\pi),
    \>
where $\Pi[n]$ is the set of (ordinary) partitions of $n$. The integers $\fp(n,f)$ generalize several classical partition number quantities: With the constant function $1$ the quantities $\fp(n,1)$ are the \emph{ordinary partition numbers}, and with $f = \mathbf{1}_A$ for some $A \subset \nn$ the quantities $\fp(n,\mathbf{1}_A)$ are the $A$-\emph{restricted partition numbers}. The sequences $(\fp(n,f))_\nn$ using the M\"obius-$\xm$ and Liouville-$\xl$ functions for $f$ are examined in \cites{daniels2023mobius,daniels2023biasymp}. 

The term \emph{signed} partition number (in the sense of \eqref{eq:fpDef}) was introduced in \cite{daniels2023mobius} to distinguish the quantities $\fp(n,f)$ from \emph{weighted} partition numbers $\fp_f(n)$; in short, signed- and weighted- partition numbers have generating functions of the forms
    \[
        \prod_{n=1}^\infty (1-f(n)z^n)^{-1} = 1 + \sum_{n=1}^\infty \fp(n,f)z^n \quad\text{and}\quad \prod_{n=1}^\infty (1-z^n)^{-f(n)} = 1 + \sum_{n=1}^\infty \fp_{f}(n)z^n,
    \]
respectively, where $f(n)$ is some integer-valued function. In a recent work of Basak, Robles, and Zaharescu \cite{basak2024exponential}, this distinction between weighted- and signed-partitions is reinforced. In particular, in \cite{daniels2023mobius} an application of the Hardy-Littlewood method yields two asymptotic main terms for $\fp(n,\mu)$, while in \cites{basak2024exponential} similar methods demonstrate the nonexistence of main terms for $\fp_\mu(n)$. Additional recent work on weighted partitions (even if not called as such) may be found in, e.g., \cites{stark2022weighted,berndt2024divisor,das2023semiprimes,bridges2023}.

For odd primes $p$ let $\xc=\xc_p$ denote the Legendre symbol $(\fr{n}{p})$ for $p$, i.e., let
    \[
        \xc_p(n) = \begin{cases}
            +1 & \text{$n$ is a quadratic residue (mod $p$),}\\
            -1 & \text{$n$ is a quadratic nonresidue (mod $p$),}\\
            0 & p\mid n.  
        \end{cases}
    \]
In addition let $\xc_2(n)$ be the Kronecker symbol $(\tf{n}{2})$, i.e., let
     \<
        \label{eq:int:Kronecker}
        \xc_2(n) = \begin{cases}
            +1 & n\equiv \pm1 \mmod{8},\\
            -1 & n\equiv \pm3 \mmod{8}, \\
            0 & \text{otherwise}.
        \end{cases}
    \>
We note that in what follows, the letter $p$ is reserved for odd primes, and $\xc=\xc_p$ and $\xc_2$ always indicate the Legendre and Kronecker symbols, respectively.

To serve as a benchmark for our formulae, we recall from \cite{hardy1918asymptotic} Hardy and Ramanujan's formula 
    \[
        \fp(n,1) = (4\rt{3})^{-1}n^{-1}\exp(\xk\rt{n})\lf[1+O(n^{-1/5})\rh],
    \]
where, for the remainder of this section, we let
    \[
        \xk := \pi\rt{2/3}.
    \]
This immediately implies the corollary relation
    \[
        \log \fp(n,1) \sim \xk\rt{n},
    \]
where $a_n \sim b_n$ indicates that $\lim_{n\to\infty} a_n/b_n = 1$. To avoid technicalities due to small values of $n$ for which $\fp(n,\xc_p)=0$, in our statements below we let
    \[
        \log^+ x := \max\{0, \log x\}. 
    \]

\begin{theorem}
    \label{thm:logPnc}
    Suppose that $p\neq 5$ and $p\not\equiv 1\mmod{8}$. As $n\to\infty$ one has
        \<
            \label{eq:logPnc}
            \log^+{\fp(n,\xc_p)} \sim \tf12\xk\rt{(1-\tf1p)n}.
        \> 
\end{theorem}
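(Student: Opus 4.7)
My plan is a saddle-point analysis of the generating function.  Since $\chi_p$ takes values in $\{0,\pm 1\}$ and is totally multiplicative, one has the Euler-type identity
\[
    F_{\chi_p}(q) := \sum_{n\geq 0} \mathfrak{p}(n,\chi_p)\, q^n \;=\; \prod_{k\geq 1} \frac{1}{1 - \chi_p(k)\, q^k},
\]
and hence $\log F_{\chi_p}(q) = \sum_{k,j\geq 1} \chi_p(k)^j\, q^{kj}/j$.  The first task is to pin down $\log F_{\chi_p}(e^{-t})$ as $t \to 0^+$ via a Mellin--Barnes contour integral.  Splitting the inner sum by the parity of $j$, the odd-$j$ piece has Dirichlet series $L(s,\chi_p)$, which is entire because $\chi_p$ is nonprincipal, while the even-$j$ piece has Dirichlet series $(1-p^{-s})\zeta(s)$.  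Shifting the contour past $s=1$ isolates the sole relevant singularity, coming from $\zeta(s)$ in the even-$j$ piece, and yields
\[
    \log F_{\chi_p}(e^{-t}) \;=\; \frac{A_p}{t} + O(1), \qquad A_p := \frac{\pi^{2}(1-1/p)}{24},
\]
where the $\tfrac{1}{4}$ in $A_p$ arises from $\sum_{j\text{ even}} j^{-2} = \zeta(2)/4$ and the factor $1-1/p$ from $(1-p^{-s})\zeta(s)$ at $s=1$.

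Next I would execute the Hardy--Ramanujan circle method.  Starting from
\[
    \mathfrak{p}(n,\chi_p) = \frac{e^{nt}}{2\pi}\int_{-\pi}^{\pi} F_{\chi_p}\bigl(e^{-t + i\theta}\bigr)\, e^{-i n \theta}\, d\theta,
\]
I would take the saddle $t = t_n := \sqrt{A_p/n}$ and evaluate the major-arc contribution ($|\theta|$ small, say $|\theta|\leq t_n^{3/2}$) by a Gaussian expansion, obtaining
\[
    \mathfrak{p}(n,\chi_p) \;=\; C_p\, n^{-\alpha}\exp\bigl(2\sqrt{A_p n}\bigr)\bigl(1 + o(1)\bigr)
\]
for some $C_p > 0$ and $\alpha$ determined by the secondary terms in the Mellin expansion.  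Since $2\sqrt{A_p n} = \tfrac{1}{2}\kappa\sqrt{(1-1/p)\, n}$, taking logarithms recovers \eqref{eq:logPnc}, provided that $\mathfrak{p}(n,\chi_p) > 0$ for all sufficiently large $n$.

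The principal difficulty is the minor-arc bound: I need $|F_{\chi_p}(e^{-t+i\theta})|$ to be substantially smaller than $F_{\chi_p}(e^{-t})$ whenever $\theta$ is bounded away from $0$, so that the contribution outside the major arc is negligible.  This is precisely where the Montgomery--Vaughan estimates on exponential sums with multiplicative coefficients, announced in the abstract, should enter---applied to the inner character sums $\sum_{k} \chi_p(k)^j\, e^{-kjt}\, e(kj\theta)$ appearing inside $\log F_{\chi_p}(e^{-t+i\theta})$---so as to extract the necessary oscillatory cancellation; naive Euler-product majorants would only give $\log |\mathfrak{p}(n,\chi_p)| \leq \kappa\sqrt{(1-1/p)n}$, which is off by a factor of $2$ in the exponent.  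A secondary issue is to secure eventual positivity $\mathfrak{p}(n,\chi_p) > 0$, so that $\log_0 \mathfrak{p}(n,\chi_p) = \log \mathfrak{p}(n,\chi_p)$; this is where the hypotheses $p \neq 5$ and $p \not\equiv 1 \pmod 8$ enter, ruling out the arithmetic coincidences that produce the periodic vanishing exhibited by the $\chi_5$ sequence.
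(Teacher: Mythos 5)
Your overall skeleton (Mellin analysis of $\log F_{\chi_p}(e^{-t})$, circle method, Montgomery--Vaughan on the minor arcs, and the constant $A_p=\tfrac{\pi^2}{24}(1-\tfrac1p)$ coming from the even-$j$ part) matches the paper's strategy, but there is a genuine gap in how you partition the circle. You treat $\theta=0$ as the only major arc and expect Montgomery--Vaughan cancellation to make everything else negligible. That fails at $\theta=\pi$ (i.e.\ $z=-\rho$): since $\chi_p(k)^j$ is the principal character for every even $j$, the even-$j$ part of $\log F$ is invariant under $z\mapsto -z$, so $\log|F_{\chi_p}(-e^{-t})| = A_p/t + O(\log(1/t))$ as well --- in the paper's notation $V_2(1)=V_1(1)=1$. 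No minor-arc estimate can suppress this; the arc at $\alpha=\tfrac12$ is a second principal arc contributing a term of the same exponential order $\exp(2\sqrt{A_p n})$ carrying a factor $(-1)^n$. Consequently your claimed conclusion $\mathfrak{p}(n,\chi_p)=C_p n^{-\alpha}\exp(2\sqrt{A_p n})(1+o(1))$ with a single constant $C_p$ is false in general: the paper's Theorems \ref{thm:P14} and \ref{thm:P34} show the true shape is $\mathfrak{a}_p n^{\lambda-3/4}\exp(2\sqrt{A_p n})\bigl[1+(-1)^n\mathfrak{b}_p n^{\lambda_*-\lambda}+O(n^{-1/5})\bigr]$, so for $p\equiv 5,7\ (\mathrm{mod}\ 8)$ the ``constant'' genuinely oscillates with the parity of $n$, and for $p\equiv 1\ (\mathrm{mod}\ 8)$ the two principal terms cancel to leading order for odd $n$ (which is exactly why that case is excluded from the theorem and only an upper bound is proved there).

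This is not a cosmetic issue, because the whole content of the theorem is the non-cancellation of the two principal contributions: one must evaluate the arc at $\alpha=\tfrac12$ asymptotically (the paper does this via Lemma \ref{lem:prcPsi*}, a separate optimal radius $\rho_*$, and the arc-transference contour of section \ref{sec:transference}) and then show the bracket $1+(-1)^n\mathfrak{b}_p n^{\lambda_*-\lambda}$ stays bounded away from $0$, which uses $L_1(\chi_p)>0$ and the hypothesis $p\not\equiv 1\ (\mathrm{mod}\ 8)$ (so that either $\mathfrak{b}_p<1$ or $\lambda_*<\lambda$). Your closing sentence gestures at the hypotheses ``securing positivity,'' but your argument as structured never produces the $(-1)^n$ term whose possible cancellation those hypotheses are there to preclude, and likewise never sees the extra principal arcs at $\alpha=\pm\tfrac15$ that make $p=5$ exceptional (for $p\neq 5$ one must still verify, as in Lemma \ref{lem:RePsiBB}, that $\Re\Psi$ on all other rational arcs $a/q$ is bounded by $\delta\Psi(\rho)$ with $\delta<1$ --- a major-arc computation of the constants $V_q(a)$, not a minor-arc oscillation bound). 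Two smaller points: for $p\equiv 3\ (\mathrm{mod}\ 4)$ one has $L(0,\chi_p)\neq 0$, so the error in $\log F_{\chi_p}(e^{-t})=A_p/t+O(1)$ is really $O(\log(1/t))$ (harmless for the log-asymptotic, but it is what produces the exponent $\sqrt{p}L_1(\chi)/(4\pi)-3/4$ of $n$); and once the two-principal-arc formula with nonvanishing bracket is in hand, the passage to $\log_0\mathfrak{p}(n,\chi_p)\sim\tfrac12\kappa\sqrt{(1-\tfrac1p)n}$ is exactly the paper's deduction of Theorem \ref{thm:logPnc} from Corollaries \ref{cor:P58asymp} and \ref{cor:P34Asymp}.
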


When $p \equiv 1 \mmod{8}$ and $n$ is even, relation \eqref{eq:logPnc} still holds, but when $n$ is odd there may be large cancellations in the asymptotic main terms of $\fp(n,\xc_p)$. As such, we presently only demonstrate a crude bound for $\fp(2m+1,\xc_p)$ as seen in the following theorem.

\begin{theorem}
    \label{thm:logP18}
    If $p\equiv 1\mmod{8}$, then for even $n$, say $n=2m$, one has
        \[
            \log^{+}{\fp(2m,\xc_p)} \sim \tf12\xk\rt{(1-\tf1p)(2m)},
        \]
    and for odd $n$, say $n=2m+1$, one has
        \[
            \fp(2m+1,\xc_p) = O\Big( m^{-\fr{19}{20}}\exp\lf(\tf12\xk\rt{(1-\tf1p)(2m+1)}\,\rh)\!\Big).
        \]
\end{theorem}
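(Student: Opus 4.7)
The plan is to analyze the generating function
\[
F(q) = \sum_{n \geq 0} \mathfrak{p}(n, \chi_p)\, q^n = \prod_{p \,\nmid\, n} (1 - \chi_p(n) q^n)^{-1}
\]
via the contour integral $\mathfrak{p}(n, \chi_p) = \frac{1}{2\pi i}\oint F(q)\, q^{-n-1}\, dq$, exploiting the fact that when $p \equiv 1 \pmod{8}$ the value $\chi_p(2) = +1$ produces a secondary saddle near $q = -1$ matching the principal saddle near $q = 1$. Writing $\log F(q) = \sum_{n,k \geq 1} \chi_p(n)^k q^{nk}/k$ and noting that $\chi_p(n)^k$ equals $\mathbf{1}_{p \,\nmid\, n}$ for even $k$ and $\chi_p(n)$ for odd $k$, the substitution $q = e^{-\tau}$ gives $\log F(e^{-\tau}) \sim (1-1/p)\pi^2/(24\tau)$ as $\tau \to 0^+$, the odd-$k$ contribution being only $O(\log(1/\tau))$ by Abel summation on the bounded character sums. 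This recovers the exponent of Theorem \ref{thm:logPnc}.

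Substituting instead $q = -e^{-\tau}$, the even-$k$ contribution is unchanged, while the odd-$k$ sum becomes $\sum_n \chi_p(n)(-1)^n (\cdots)$. Splitting by parity of $n$ and using $\chi_p(2m) = \chi_p(2)\chi_p(m) = \chi_p(m)$, the odd-$k$ contribution is again $O(\log(1/\tau))$, so $\log F(-e^{-\tau}) \sim (1-1/p)\pi^2/(24\tau)$ as well. Saddle-point analysis on the contour $|q| = e^{-\tau_n}$ with $\tau_n \asymp n^{-1/2}$ then produces two principal contributions of the form
\[
A\, e^{\frac12 \kappa \sqrt{(1-1/p)n}} \qquad \text{and} \qquad A\, (-1)^n\, e^{\frac12 \kappa \sqrt{(1-1/p)n}}
\]
from the arcs near $q = 1$ and $q = -1$; the matching leading behaviors of $\log F(\pm e^{-\tau})$, together with their quadratic corrections in $\tau$, identify the two amplitudes.

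For even $n = 2m$ the two contributions add coherently, yielding $\log_0 \mathfrak{p}(2m, \chi_p) \sim \tfrac12 \kappa \sqrt{(1-1/p)(2m)}$. For odd $n = 2m+1$ they cancel to leading order, so the bound on $\mathfrak{p}(2m+1, \chi_p)$ reduces to a minor-arc estimate, namely controlling $|F(e^{-\tau_n} e^{i\theta})|$ for $\theta$ bounded away from $\{0, \pi\}$. Here I would apply the Montgomery--Vaughan bound on exponential sums $\sum_n f(n) e(n\alpha)$ with multiplicative $|f| \leq 1$ (referenced in the abstract) and integrate the resulting pointwise bound against the arc length to extract the polynomial saving of $m^{-19/20}$.

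The main obstacle will be producing a polynomial, rather than merely logarithmic, saving on the minor arcs: the Montgomery--Vaughan estimate typically yields only modest power savings on the twisted multiplicative exponential sum itself, so careful optimization of the contour radius $\tau_n$ against the arc length and the pointwise bound is required to reach the exponent $19/20$. A secondary technical point is the rigorous matching of the two saddle amplitudes beyond the leading exponential rate, which is needed to guarantee cancellation for odd $n$ and thereby legitimize the reduction to the minor-arc analysis.
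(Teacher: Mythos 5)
Your overall skeleton (circle method with twin saddles at $\pm1$, cancellation for odd $n$ because $\chi_p(2)=1$) is the same as the paper's, but two of your key steps are misdirected, and one of them is where the stated exponent actually comes from. First, the factor $m^{-19/20}$ is \emph{not} extracted from the minor arcs. In the paper the non-principal contributions are exponentially small relative to the main term: Montgomery--Vaughan (Theorem \ref{thm:Minor}) gives $\Psi(\rho e(\alpha))\ll X/\log X$ on the minor arcs, so $|\Phi(\rho e(\alpha))|\le \exp(O(X/\log X))$, which against the principal size $\exp(\mathbf{c}X+O(\log X))$ is a saving of a constant factor \emph{in the exponent}, far more than polynomial; the worry you flag about ``producing a polynomial rather than logarithmic saving on the minor arcs'' is therefore a misreading of what that bound buys. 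The exponent $19/20$ arises entirely on the principal arcs: the saddle-point evaluation (Proposition \ref{prop:MainInt}) produces a prefactor $n^{-3/4}$ together with a \emph{relative} error $O(n^{-1/5})$, and for $p\equiv1\pmod 8$ the two amplitudes are exactly equal ($\mathfrak{b}_p=1$ in Theorem \ref{thm:P14}), so for odd $n$ the bracket collapses to $O(n^{-1/5})$ and $3/4+1/5=19/20$. Without a quantitative error term of this quality in your saddle-point expansion, no amount of minor-arc work will give the stated bound.

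Second, your dichotomy ``principal arcs near $\pm1$ versus everything else handled by Montgomery--Vaughan'' has a genuine hole: the Montgomery--Vaughan input only applies when $\alpha$ is \emph{not} well-approximated by a rational with denominator $q\le X^{1/3}$, so the arcs around $a/q$ for $3\le q\le X^{1/3}$ must be treated separately. The paper does this via the explicit constants $V_q(a)$ (Proposition \ref{prop:Major}), whose evaluation when $p\mid q$ involves the Gauss sum $G(\chi_p)$ and $L(2,\chi_p)$, and the verification $\Re\bigl(V_q(a)\Delta e^{i\varphi}\bigr)\le\delta<1$ in Lemma \ref{lem:RePsiBB} requires real estimates (and some numerics); the case $p=5$, where $V_5(\pm1)=1$ and the arcs at $\pm\tfrac15$ are \emph{not} negligible, shows this step cannot be waved away. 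Finally, the ``matching of the two saddle amplitudes'' that you defer as a secondary technicality is the crux of the odd-$n$ statement, and the relevant data are not ``quadratic corrections in $\tau$'' but the $\log(1/\tau)$ and constant terms of $\log F(\pm e^{-\tau})$: these are computed by Mellin transform from the (double) pole at $s=0$ of $\zeta(s+1)L(s,\chi_p)\Gamma(s)$ and its twist by $(\chi_p(2)2^{1-s}-1)$, and they involve $L'(0,\chi_p)$, equivalently $\sqrt{p}\,L(1,\chi_p)$. The amplitudes agree exactly precisely because $\chi_p(2)=1$ kills the difference $\Lambda_*-\Lambda=(\chi_p(2)-1)L'(0,\chi_p)$; this computation must be carried out, since for $p\equiv5\pmod8$ the same argument shows the amplitudes do \emph{not} match and the cancellation fails.
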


Theorems \ref{thm:logPnc} and \ref{thm:logP18} are corollaries of our main results which we now discuss. Ultimately, the two most salient properties of the prime $p$ that determine the asymptotic behavior of $\fp(n,\xc_p)$ are: i) the residue of $p$ (mod $4$); and ii) whether or not $2$ is a quadratic residue modulo $p$, i.e., the value of $\xc_p(2)$. Thus, we separate odd primes by their residues modulo $8$.

In stating our formulae it is convenient to let
    \[
        L_1(\xc_p) = L(1,\xc_p) \qquad\text{and}\qquad L_1'(\xc_p) = L'(1,\xc_p),
    \]
where $L(s,\xc_p)$ is Dirichlet $L$-function for $\xc_p$. Many of our formulae involve complicated constants, and hence we wish to convey to the reader that the ``structures'' of the formulae are more important than the exact values of the constants therein.

\begin{theorem}
\label{thm:P14}
    Let $p\neq 5$ and suppose that $p\equiv 1\mmod{4}$. As $n\to\infty$ one has
        \[
            \fp(n,\xc_p) = \fa_p n^{-\fr34}\exp\lf(\tfr12\xk\rt{(1-\tf1p)n}\,\rh) \lf[ 1+(-1)^n\fb_p +O(n^{-\fr15}) \rh],
        \]
    where
        \[
            \xk = \pi\rt{\tf23}, \qquad \fa_p = \ptfr{p-1}{384\,p^2}^{\fr14} \exp(\tf14\rt{p}L_1(\xc_p))
        \]
    and
        \[ 
        \fb_p = 
        \begin{cases} 
            1 & p\equiv 1\mmod{8}, \\
            \exp(-\rt{p}L_1(\xc_p)) & \text{$p\equiv 5\mmod{8}$ and $p\neq 5$.}
        \end{cases}
        \]
\end{theorem}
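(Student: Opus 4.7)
The plan is to apply a Hardy--Ramanujan style saddle-point analysis to the contour integral
\[
    \mathfrak{p}(n,\chi_p) = \frac{1}{2\pi i}\oint_{|q|=r} F_p(q)\,q^{-n-1}\,dq,
    \qquad
    F_p(q) = \prod_{\chi_p(m)=1}\frac{1}{1-q^m}\prod_{\chi_p(m)=-1}\frac{1}{1+q^m}.
\]
The only singularities of $F_p$ on $|q|=1$ that contribute to the leading asymptotic are at $q=1$ and $q=-1$: both produce a $1/t$ pole in $\log F_p(\omega e^{-t})$ with the same coefficient $A_p := (1-1/p)\pi^2/24$, while all other roots of unity (for $p\neq 5$) are shown to have strictly smaller pole coefficients and hence to give exponentially subdominant contributions. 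The final answer arises by summing two Gaussian saddles, with the difference between them supplying the $(-1)^n\mathfrak{b}_p$ term.

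The first main step is to derive $\log F_p(\pm e^{-t}) = A_p/t + E_p^{\pm} + O(t)$ as $t\to 0^+$. Writing
\[
    \log F_p(\pm e^{-t}) = \sum_{k\,\text{odd}}\frac{1}{k}\sum_{m\geq 1} \chi_p(m)(\pm 1)^{mk} e^{-mkt}
    + \sum_{k\,\text{even}}\frac{1}{k}\sum_{\substack{m\geq 1\\ p\nmid m}} e^{-mkt},
\]
I would apply Mellin inversion. The even-$k$ Mellin transform is $\Gamma(s)\zeta(s)(1-p^{-s})\,2^{-s-1}\zeta(s+1)$; the odd-$k$ Mellin transform is $\Gamma(s)L(s,\chi_p)(1-2^{-s-1})\zeta(s+1)$ at $q=1$, with an extra factor $[2\chi_p(2)2^{-s}-1]$ appearing at $q=-1$. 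The pole at $s=1$ produces $A_p/t$. The expected double pole at $s=0$ from $\Gamma(s)\zeta(s+1)$ is cut down to a simple pole in both sums, since $\zeta(s)(1-p^{-s})$ vanishes at $s=0$ and, crucially, $L(0,\chi_p)=0$ because $\chi_p$ is an even character when $p\equiv 1\pmod 4$. The resulting absence of any $\log(1/t)$ term is exactly what produces the prefactor $n^{-3/4}$ in the answer. The simple-pole residue at $s=0$ expresses $E_p^{\pm}$ in terms of $L'(0,\chi_p)$, which the functional equation $\Lambda(s,\chi_p)=\Lambda(1-s,\chi_p)$ (with root number $1$, using $\tau(\chi_p)=\sqrt{p}$) turns into $L'(0,\chi_p)=\tfrac12\sqrt{p}\,L_1(\chi_p)$, producing the $\exp(\tfrac14\sqrt{p}\,L_1(\chi_p))$ factor in $\mathfrak{a}_p$. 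The difference $E_p^- - E_p$ is supplied solely by the $[2\chi_p(2)2^{-s}-1]$ factor: when $\chi_p(2)=1$ (i.e.\ $p\equiv 1\pmod 8$) the $q=-1$ contribution matches the one at $q=1$, giving $\mathfrak{b}_p=1$; when $\chi_p(2)=-1$ (i.e.\ $p\equiv 5\pmod 8$) a short residue computation gives $E_p^- - E_p = -2L'(0,\chi_p) = -\sqrt{p}\,L_1(\chi_p)$, hence $\mathfrak{b}_p=\exp(-\sqrt{p}\,L_1(\chi_p))$.

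With these expansions in hand, the saddle-point step is essentially classical: choose $\alpha_n\sim\sqrt{A_p/n}$ as the critical point of $A_p/t+nt$, and deform the contour to two short vertical arcs centered at $t=\alpha_n$ and $t=\alpha_n+i\pi$ (corresponding to $q\approx 1$ and $q\approx -1$), plus minor arcs elsewhere. On each major arc the integrand is approximately Gaussian; the width produces the $n^{-3/4}$, the saddle value produces $\exp(\tfrac12\kappa\sqrt{(1-1/p)n})$ and the numeric constants in $\mathfrak{a}_p$ (via $e^{E_p}$ together with the Gaussian normalization), and the arc near $q=-1$ acquires an extra factor of $(-1)^n\mathfrak{b}_p$, with the sign $(-1)^n$ arising from $q^{-n-1}$ evaluated near $q=-1$.

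The main obstacle will be bounding the minor-arc contribution by $O(n^{-1/5})$ times the main term. This requires a uniform estimate of the form $\Re\log F_p(e^{-\alpha_n-i\theta}) \leq A_p/\alpha_n - c\sqrt{n}$ for $\theta$ bounded away from $0$ and $\pi$, and the accompanying verification that no other rational $a/c$ produces a competing saddle of the same order (which for $p\neq 5$ reduces to a pole-coefficient bound $A_{p,c}<A_p$ for $c\geq 3$). This is exactly where the Montgomery--Vaughan-type estimate for exponential sums with multiplicative coefficients (advertised in the abstract) is brought in: it forces the cancellation in $\sum_m\chi_p(m)\cos(m\theta)$-type sums needed to control $\Re\log F_p$ away from the two principal saddles. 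Granted this estimate, combining it with the two major-arc Gaussian evaluations delivers the claimed asymptotic with error $O(n^{-1/5})$.
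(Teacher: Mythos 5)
Your proposal follows essentially the same route as the paper: the same even/odd-$k$ Mellin decomposition with integrands $\Gamma(s)\zeta(s+1)2^{-s-1}(1-p^{-s})\zeta(s)$ and $\Gamma(s)\zeta(s+1)(1-2^{-s-1})L(s,\chi_p)$ (with the extra factor $2\chi_p(2)2^{-s}-1$ at $z=-1$), the observation that $L(0,\chi_p)=0$ for $p\equiv1\ (\mathrm{mod}\ 4)$ kills the logarithmic term and yields the $n^{-3/4}$ prefactor, the functional-equation identity $L'(0,\chi_p)=\tfrac12\sqrt{p}\,L_1(\chi_p)$ giving $\mathfrak{a}_p$ and $\mathfrak{b}_p$, a Gaussian saddle at each of $z=\pm1$ with radius $e^{-\alpha_n}$, $\alpha_n\sim\sqrt{A_p/n}$, and the Montgomery--Vaughan bound plus a pole-coefficient comparison $V_q(a)<1$ for $q\geq3$ (where $p\neq5$ is exactly what is needed) to dispose of all other arcs. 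Your constants and the structure of the error term agree with the paper's derivation, so the plan is correct and matches the paper's proof in all essentials.
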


Since $L_1(\xc_p) > 0$ for all primes $p$ (and $L_1(\xc_2) > 0$ as well), see, e.g.\ \cite{montgomery2007multiplicative}*{p.\ 124}, we have the following corollary which provides a ``middle ground'' of complexity between Theorems \ref{thm:logPnc} and \ref{thm:P14}. We recall that the relation $a_n \asymp b_n$ holds if
    \[
        a_n = O(b_n) \quad\text{and}\quad b_n = O(a_n) \quad\text{as $n\to\infty$.}
    \]

\begin{corollary}
    \label{cor:P58asymp}
    Suppose that $p\equiv 5\mmod{8}$ and $p\neq 5$. As $n\to\infty$ one has 
        \[
            \fp(n,\xc_p) \asymp n^{-\fr34}\exp\lf(\tf12\xk\rt{(1-\tf1p)n}\,\rh).
        \]
\end{corollary}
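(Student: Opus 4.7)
The plan is to extract this corollary directly from Theorem \ref{thm:P14} by showing that, in the regime $p \equiv 5 \pmod 8$ with $p \neq 5$, the bracketed factor
$$\bigl[1 + (-1)^n \mathfrak{b}_p + O(n^{-1/5})\bigr]$$
is bounded above and below by positive constants for all sufficiently large $n$. Since $\mathfrak{a}_p$ is manifestly a fixed positive number (the factor $(p^{-1}-p^{-2})^{1/4}$ is positive for $p > 1$ and $\exp(\tfrac14\sqrt{p}L_1(\chi_p))$ is positive), the asymptotic $\asymp n^{-3/4}\exp\!\bigl(\tfrac12\kappa\sqrt{(1-1/p)n}\bigr)$ will follow immediately.

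First I would note that in the case $p \equiv 5 \pmod 8$, $p \neq 5$, the explicit form $\mathfrak{b}_p = \exp(-\sqrt{p}L_1(\chi_p))$ applies. Then I would invoke the appendix (cited immediately before Corollary \ref{cor:P58asymp}), where it is established that $L_1(\chi_p) > 0$ for every odd prime $p$. Consequently $0 < \mathfrak{b}_p < 1$ strictly, and in particular
$$0 < 1 - \mathfrak{b}_p \leq 1 + (-1)^n\mathfrak{b}_p \leq 1 + \mathfrak{b}_p < 2.$$
This is the crux of the argument: strict positivity of $L_1(\chi_p)$ rules out the degenerate cancellation $\mathfrak{b}_p = 1$ that would otherwise kill the lower bound for odd $n$ (this is precisely what fails when $p \equiv 1 \pmod 8$, where one has $\mathfrak{b}_p = 1$ and so the odd-indexed terms must be handled separately, as in Theorem \ref{thm:logP18}).

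For all $n$ sufficiently large, the $O(n^{-1/5})$ error is smaller than $\tfrac12(1 - \mathfrak{b}_p)$ in absolute value, so the bracket lies in the interval $\bigl[\tfrac12(1-\mathfrak{b}_p),\,2+\tfrac12(1-\mathfrak{b}_p)\bigr]$, a fixed compact subset of $(0,\infty)$ depending only on $p$. Multiplying through by $\mathfrak{a}_p n^{-3/4}\exp\!\bigl(\tfrac12\kappa\sqrt{(1-1/p)n}\bigr)$ then yields two-sided bounds of the claimed form, completing the proof.

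The main obstacle is the one just isolated, namely verifying the strict inequality $\mathfrak{b}_p < 1$, i.e.\ $L_1(\chi_p) > 0$. Granted the appendix's treatment of this positivity, the remainder of the argument is purely formal: the corollary is simply the statement that when the leading-order expansion in Theorem \ref{thm:P14} has no catastrophic cancellation between its $n$-even and $n$-odd components, both the upper and lower halves of the $\asymp$ relation persist.
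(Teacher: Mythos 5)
Your proposal is correct and follows exactly the route the paper intends: the corollary is deduced from Theorem \ref{thm:P14} by noting that $\fb_p = \exp(-\sqrt{p}\,L_1(\chi_p)) \in (0,1)$, thanks to the appendix's positivity $L_1(\chi_p)>0$, so the bracketed factor $1+(-1)^n\fb_p+O(n^{-1/5})$ stays between positive constants for large $n$. Nothing further is needed.
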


Although the asymptotic formulae for $\fp(n,\xc_p)$ with $p\equiv 3\mmod{4}$ have the same overall structure as those for $p\equiv 1\mmod{4}$, in these cases the constants $\fa_p$ and $\fb_p$ have significantly more complicated formulae.

\begin{theorem}
    \label{thm:P34}
    Suppose that $p\equiv 3\mmod{4}$, let $\xk=\pi\rt{2/3}$, let $\xc=\xc_p$, and let
        \<
            \label{eq:int:P34fa}
            \fa_p = \pfr{p-1}{384\,p^2}^{\fr14} \exp\!\bigg( \fr{\rt{p}L_1(\xc)}{2\pi}\bigg( \xg + \fr12\log\!\bigg(\fr{384}{p-1}\bigg) - \fr{L_1'(\xc)}{L_1(\xc)} \bigg)\!\bigg),
        \>
    where $\xg$ is the Euler-Mascheroni constant.
    If $p\equiv 3\mmod{8}$, then as $n\to\infty$ one has
    \[
		\fp(n,\xc_p) = \fa_p n^{\rt{p}L_1(\xc)/4\pi-3/4} \exp\lf(\tf12\xk\rt{(1-\tf1p)n}\,\rh) \lf[ 1+(-1)^n \fb_p n^{-\rt{p}L_1(\xc)/\pi} + O(n^{-1/5}) \rh],
	\]
    where
    \[
        \fb_p = \exp\lf[\fr{\rt{p}L_1(\xc)}{\pi}\lf(2\fr{L_1'(\xc)}{L_1(\xc)} + \log\pfr{p(p-1)}{192} - 2\xg \rh)\rh].
    \]
    If $p\equiv 7\mmod{8}$, then as $n\to\infty$ one has
    \[
        \fp(n,\xc_p) = \fa_p n^{\rt{p}L_1(\xc)/4\pi-3/4} \exp\lf(\tf12\xk\rt{(1-\tf1p)n}\,\rh) \lf[ 1 + (-1)^n \fb_p + O(n^{-1/5}) \rh],
    \]
    where
    \[
        \fb_p = 2^{-\rt{p}L_1(\xc)/\pi}.
    \]
\end{theorem}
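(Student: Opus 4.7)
The plan is to study $\mathfrak{p}(n,\chi_p)$ via its generating function
\[
F(q) := \sum_{n\geq 0}\mathfrak{p}(n,\chi_p)q^n \;=\; \prod_{k=1}^\infty \bigl(1-\chi_p(k)q^k\bigr)^{-1},
\]
and apply the saddle-point/circle method: recover $\mathfrak{p}(n,\chi_p)$ from Cauchy's formula on a circle $|q|=e^{-\tau_0}$ where $\tau_0=\tau_0(n)\to 0^+$ is tuned to the dominant saddle. The main analytic input is the expansion of $f(\tau):=\log F(e^{-\tau})$ as $\tau\to 0^+$, which I would extract via Mellin inversion. Expanding $f(\tau)=\sum_{k,m\geq 1}\chi_p(k)^m e^{-km\tau}/m$ and splitting on the parity of $m$ (using $\chi_p(k)^m=\chi_p(k)$ for $m$ odd and $\chi_p(k)^m=\mathbf{1}_{p\nmid k}$ for $m$ even), one has $f(\tau)=\frac{1}{2\pi i}\int_{(c)}\tilde{f}(s)\tau^{-s}\,ds$ with
\[
\tilde{f}(s) \;=\; \Gamma(s)\zeta(s+1)\bigl[(1-2^{-s-1})L(s,\chi_p)+2^{-s-1}\zeta(s)(1-p^{-s})\bigr].
\]
Shifting the contour to the left past the poles at $s=1$ and $s=0$ produces the desired asymptotic expansion.

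The distinctive feature of $p\equiv 3\mmod{4}$, in contrast with $p\equiv 1\mmod{4}$, is that $\chi_p$ is an \emph{odd} character, so $L(0,\chi_p)\neq 0$. Hence the bracket in $\tilde{f}(s)$ takes the nonzero value $L(0,\chi_p)/2$ at $s=0$, and the simple poles of $\Gamma(s)$ and $\zeta(s+1)$ combine into a \emph{double pole} of $\tilde{f}(s)$ at $s=0$. Its residue produces both a $\log(1/\tau)$-term and a constant term, giving an expansion
\[
f(\tau) \;=\; \frac{c}{\tau}+A_p\log(1/\tau)+B_p+O(\tau^\delta),\qquad c=\frac{\pi^2(1-1/p)}{24}.
\]
The coefficient $A_p$ equals $L(0,\chi_p)/2$, and the functional equation $\Lambda(s,\chi_p)=\Lambda(1-s,\chi_p)$ for an odd real primitive character converts this into $A_p=\sqrt{p}L_1(\chi_p)/(2\pi)$; differentiating the functional equation at $s=0$ converts $L'(0,\chi_p)$ into an expression in $L_1'(\chi_p)/L_1(\chi_p)$, $\gamma$, and $\log(2\pi/p)$. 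Combining with the Laurent coefficients of $\Gamma(s)$, $\zeta(s+1)$, and $\zeta(s)$ at $s=0$ assembles precisely the $\gamma$, $\log(\cdots/\sqrt{p-1})$ and $L_1'(\chi_p)/L_1(\chi_p)$ appearing in \eqref{eq:int:P34fa}.

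With this expansion in hand I would carry out the standard saddle-point analysis of $\oint F(q)q^{-n-1}\,dq$. The saddle lies at $\tau_0\sim\sqrt{c/n}$, Gaussian integration contributes $n^{-3/4}$, the new $\tau_0^{-A_p}$ factor contributes $n^{A_p/2}=n^{\sqrt{p}L_1(\chi_p)/(4\pi)}$, and $\exp(2\sqrt{cn})$ matches $\exp\!\bigl(\tfrac12\kappa\sqrt{(1-1/p)n}\bigr)$; the remaining constants (including $c^{-A_p/2}$ and the prefactor $c^{1/4}/(2\sqrt{\pi})$) assemble into $\fa_p$. To obtain the $(-1)^n\fb_p$ correction I would perform the analogous analysis at the second major arc $q=-1$: write $F(-e^{-\tau})=\prod_k(1-\chi_p(k)(-1)^ke^{-k\tau})^{-1}$ and compute its Mellin transform. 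The parity factor $(-1)^{km}$ modifies the $m$-odd sum to $(2^{1-s}\chi_p(2)-1)L(s,\chi_p)$, so the leading bracket value at $s=0$ becomes $(2\chi_p(2)-1)L(0,\chi_p)/2$. For $p\equiv 7\mmod{8}$ one has $\chi_p(2)=+1$, giving the same value $L(0,\chi_p)/2$ as the main arc and hence the same power of $n$; the secondary contribution is thus a pure constant $\fb_p$ coming from the difference $-L(0,\chi_p)\log 2$ in the subleading expansions, producing $\fb_p=2^{-\sqrt{p}L_1(\chi_p)/\pi}$. For $p\equiv 3\mmod{8}$ one has $\chi_p(2)=-1$ and bracket value $-3L(0,\chi_p)/2$; the saddle-point power then differs from the main arc's by $n^{(-3-1)L(0,\chi_p)/4}=n^{-\sqrt{p}L_1(\chi_p)/\pi}$, matching the extra factor of $n$ in the theorem, with the accompanying $\fb_p$ computed analogously from the constant terms.

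The chief obstacle is the precise double-pole residue computation at $s=0$: assembling the exact constants involving $\gamma$, logarithms of $p$ and $p-1$, and $L_1'(\chi_p)/L_1(\chi_p)$ requires careful use of the functional equation and its derivative, and the parallel residue computation for the $q=-1$ arc must be carried out with matching precision in both subcases mod $8$. Bounding the integral on the ``minor arcs'' (away from $q=\pm 1$) by $O(n^{-1/5})$ relative to the main term would use Montgomery--Vaughan-type estimates for exponential sums with multiplicative coefficients, as alluded to in the abstract.
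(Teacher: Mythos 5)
Your route is essentially the paper's: Cauchy's formula with a saddle radius $e^{-\tau_0}$, Mellin expansion of $\log\Phi$ near $z=1$ with the double pole at $s=0$ caused by $L(0,\chi_p)\neq 0$, the functional equation and its derivative to convert $L(0,\chi_p)$, $L'(0,\chi_p)$ into $\sqrt{p}L_1(\chi)/\pi$ and the $\gamma$, $\log$, $L_1'/L_1$ combination in $\fa_p$, and the parallel analysis at $z=-1$ via $(\chi(2)2^{1-s}-1)L(s,\chi)$, which correctly yields the exponent shift $n^{-\sqrt{p}L_1(\chi)/\pi}$ for $p\equiv3\mmod{8}$ and $\fb_p=2^{-\sqrt{p}L_1(\chi)/\pi}$ for $p\equiv7\mmod{8}$. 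That part of the proposal matches the paper's Sections 5, 9 and 10 in both structure and constants.

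The genuine gap is your disposal of everything away from $z=\pm1$. Montgomery--Vaughan-type estimates (the paper's Theorem \ref{thm:Minor}) only bound $\Psi(\rho e(\alpha))$ when $\alpha$ is \emph{not} well approximated by a rational with denominator at most $X^{1/3}$; they give nothing for $\alpha$ near $a/q$ with small fixed $q\geq 3$, e.g.\ near $a/4$, $a/p$, or $a/(2p)$. On those arcs one has $\Psi(\rho e(\alpha))\approx V_q(a)\,\mathbf{c}X/(1-2\pi iX\beta)$ with an explicit character-sum constant $V_q(a)$ (the paper's Proposition \ref{prop:Major}), and one must prove $\Re\big(V_q(a)\Delta e^{i\varphi}\big)\leq\delta<1$ uniformly. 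This is not a formality: when $p\mid q$ and $q$ is odd, $V_q(a)$ involves $\chi(a)G(\chi)(1-\chi(2)/4)L(2,\chi)$, and for $p\equiv3\mmod{4}$ the required bound reduces to a discriminant condition $W_p^2<4(p+1)$ for $W_p=(4-\chi(2))L_2(\chi)/((1-1/p)\zeta(2))$, checked by explicit estimates together with a numerical verification at $p=3$ (Lemma \ref{lem:RePsiBB}). Indeed this is exactly the step that fails at $p=5$, where $V_5(\pm1)=1$ and two additional principal arcs appear; your proposal, which attributes all non-principal arcs to Montgomery--Vaughan, cannot see this phenomenon, and without the intermediate-arc analysis the claim that $z=\pm1$ dominate (let alone with relative error $O(n^{-1/5})$) is unsupported.

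A smaller technical point: for $p\equiv3\mmod{8}$ the two principal arcs have different logarithmic coefficients ($\lambda\neq\lambda_*$), so a single circle tuned to the saddle at $z=1$ leaves an uncancelled linear term in the saddle-point expansion at $z=-1$. The paper avoids this by integrating over two radii $\rho<\rho_*$ joined by short radial segments (Section \ref{sec:transference}); your one-radius plan can be repaired, since the mistuning only costs a factor $1+O(n^{-1/2})$, but it should be addressed explicitly.
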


Again because $L_1(\xc_p)>0$ for all $p$, we have the following analogue of Corollary \ref{cor:P58asymp}.

\begin{corollary}
    \label{cor:P34Asymp}
    If $p\equiv 7\mmod{8}$, then as $n\to\infty$ one has
        \[
            \fp(n,\xc_p) \asymp n^{\rt{p}L_1(\xc)/4\pi-3/4} \exp\lf(\tf12\xk\rt{(1-\tf1p)n}\,\rh).
        \]
    If $p\equiv 3\mmod{8}$, then as $n\to\infty$ one has the stronger relation
        \[
            \fp(n,\xc_p) \sim \fa_p n^{\rt{p}L_1(\xc)/4\pi-3/4} \exp\lf(\tf12\xk\rt{(1-\tf1p)n}\,\rh),
        \]
    where $\fa_p$ is again given by \eqref{eq:int:P34fa}.
\end{corollary}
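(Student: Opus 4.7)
The plan is to deduce Corollary \ref{cor:P34Asymp} as an immediate consequence of Theorem \ref{thm:P34}, using only the fact (noted in the paragraph preceding Corollary \ref{cor:P58asymp} and verified in the appendix) that $L_1(\chi_p) > 0$ for every odd prime $p$. In both congruence cases the exponential factor $\exp(\tf12\kappa\sqrt{(1-1/p)n})$ and the power factor $n^{\sqrt{p}L_1(\chi)/(4\pi) - 3/4}$ are the same, so the analysis reduces to controlling the bracket in each asymptotic.

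For $p \equiv 3 \pmod 8$, the bracket provided by Theorem \ref{thm:P34} is
\[
    1 + (-1)^n \fb_p\, n^{-\sqrt{p}L_1(\chi)/\pi} + O(n^{-1/5}).
\]
Positivity of $L_1(\chi)$ makes the exponent $-\sqrt{p}L_1(\chi)/\pi$ strictly negative, so $n^{-\sqrt{p}L_1(\chi)/\pi}\to 0$; since $\fb_p$ is a fixed constant, the entire oscillating middle term vanishes as $n\to\infty$. Combined with the $O(n^{-1/5})$ error, the bracket tends to $1$, which yields the stated $\sim$ relation (with the same constant $\fa_p$ given by \eqref{eq:int:P34fa}).

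For $p \equiv 7 \pmod 8$, the bracket is instead
\[
    1 + (-1)^n \fb_p + O(n^{-1/5}), \qquad \fb_p = 2^{-\sqrt{p}L_1(\chi)/\pi}.
\]
Now positivity of $L_1(\chi)$ only forces $\fb_p \in (0,1)$, and the oscillation $(-1)^n \fb_p$ no longer decays. For any $\xe > 0$ and all sufficiently large $n$, the bracket lies in the interval $[1 - \fb_p - \xe,\, 1 + \fb_p + \xe]$, whose endpoints are strictly positive constants. Dividing out by the prefactor $\fa_p n^{\sqrt{p}L_1(\chi)/(4\pi)-3/4}\exp(\tf12\kappa\sqrt{(1-1/p)n})$ therefore produces two-sided bounds of the form required by $\asymp$.

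There is no genuine obstacle here; the only observation worth flagging is why the two cases of the corollary are qualitatively different. In the $p \equiv 3 \pmod 8$ case, the parity-dependent secondary term carries an extra negative power of $n$, which forces it to vanish and permits the stronger $\sim$ conclusion; in the $p \equiv 7 \pmod 8$ case no such damping factor is present, so the parity oscillation survives as a bounded, non-vanishing perturbation and only the two-sided bound $\asymp$ can be extracted from Theorem \ref{thm:P34}.
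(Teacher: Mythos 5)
Your proof is correct and takes exactly the approach the paper intends: in the $p\equiv 7\pmod 8$ case, positivity of $L_1(\chi)$ puts $\fb_p = 2^{-\sqrt{p}L_1(\chi)/\pi}$ strictly in $(0,1)$ so the bracket is bounded between two positive constants, while in the $p\equiv 3\pmod 8$ case the extra factor $n^{-\sqrt{p}L_1(\chi)/\pi}$ has a strictly negative exponent and kills the oscillation. The paper gives no explicit proof beyond remarking that the corollary follows from $L_1(\chi_p)>0$, and your write-up is a faithful expansion of that remark.
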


Given the appearance of the term $\rt{p}L_1(\xc)/4\pi-3/4$ in the exponents of $n$ in Theorem \ref{thm:P34}, we pause to consider the potential size of this term. Siegel's theorem (see, e.g., \cite{montgomery2007multiplicative}*{Thm.\ 11.14}) states that: For each $\xe>0$ there exists a constant $C(\xe)>0$ such that: if $\xy$ is a quadratic character modulo $q$, then 
    \[
        L_1(\xy) > C(\xe) q^{-\xe}.
    \]
Although it is well-known that the constant $C(\xe)$ above is ineffective, Siegel's theorem implies that for any $\xe>0$ we have
    \[
        \rt{p}L_1(\xc_p)/4\pi - 3/4 \asymp p^{\nfr12-\xe} \qquad (p\to\infty).
    \]
As such, the terms $n^{\rt{p}L_1(\xc_p)/4\pi - 3/4}$ in Theorem \ref{thm:P34} and Corollary \ref{cor:P34Asymp} may have arbitrarily large exponents as the prime $p$ increases.

\subsection{The special cases of \tops{$\fp(n,\xc_5)$}{p(n,chi5)} and \tops{$\fp(n,\xc_2)$}{p(n,chi2)}}

Examining Theorems \ref{thm:P14} and \ref{thm:P34}, one sees that the formulae for $\fp(n,\xc_p)$ therein all contain a factor of the form
    \<
    \label{eq:int:GenSign}
        1 + (-1)^n\fb_p n^{-\xd} + O(n^{-1/5}) \qquad (\xd \geq 0).
    \>    
In essence, the sequence $(\fp(n,\xc_5))_\nn$ behaves so differently than the other $(\fp(n,\xc_p))_\nn$ because the factor in the formula for $\fp(n,\xc_5)$ corresponding to \eqref{eq:int:GenSign} includes an extra cosine term. We remark that since $p$ is now explicitly set to be $5$, the constants $\fa_p$ and $\fb_p$ (now $\fa_5$ and $\fb_5$) of Theorem \ref{thm:P14} may be explicitly computed.

\begin{theorem}
    \label{thm:P5}
    As $n\to\infty$ one has
    \<
        \label{eq:int:P5Form}
		\fp(n,\xc_5) = \fa_5 n^{-\fr34}\exp\!\Big(\tf12\xk\rt{\tf45n}\,\Big) \lf[ 1 + (-1)^n\fb_5 + \fd_5 \cos\lf(\tf{2\pi}{5}n-\tf{\pi}{10}\rh) + O(n^{-\fr15}) \rh],
    \>
    where
    \[
        \xk = \pi\rt{\tf23}, \quad \fa_5 = \bigg(\fr{3+\rt{5}}{960}\bigg)^{\fr14}, \quad \fb_5 = \fr{3-\rt{5}}{2}, \quad\text{and}\quad \fd_5 = \rt{2(5-\rt{5})}.
    \]
\end{theorem}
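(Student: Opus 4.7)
The plan is to specialize the circle-method / saddle-point argument underlying Theorem \ref{thm:P14} to $p=5$, while additionally tracking contributions from the primitive fifth roots of unity that are subdominant for other primes. Starting from Cauchy's formula
\[
\mathfrak{p}(n,\chi_5) = \frac{1}{2\pi i}\oint_{|q|=r} F_5(q)\, q^{-n-1}\,dq, \qquad F_5(q) := \prod_{m=1}^{\infty}(1-\chi_5(m)q^m)^{-1},
\]
I would choose the radius $r=e^{-\alpha}$ from the saddle-point condition near $q=1$, partition the contour into major arcs about the roots of unity $q_0 \in \{1,-1,\zeta_5,\zeta_5^2,\zeta_5^3,\zeta_5^4\}$ and minor arcs, and analyze each major arc via the Mellin--Barnes representation
\[
\log F_5(q_0 e^{-t}) = \frac{1}{2\pi i}\int_{(c)}\Gamma(s)\, D_{q_0}(s)\, t^{-s}\,ds,
\]
with $D_{q_0}(s)=\sum_{k\geq 1}k^{-s-1}\sum_{m\geq 1}\chi_5(m)^k q_0^{km} m^{-s}$ reducing, by periodicity of $\chi_5$ modulo $5$ and the Gauss sum $\tau(\chi_5)=\sqrt 5$, to a finite combination of Hurwitz zeta functions at rationals with denominator $5$.

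The arcs at $q_0=1$ and $q_0=-1$ proceed exactly as in Theorem \ref{thm:P14}; substituting $L_1(\chi_5)=(2/\sqrt 5)\log\phi$ with $\phi=(1+\sqrt 5)/2$, together with the identity $\phi^{-2}=(3-\sqrt 5)/2$, converts the general formulae of that theorem into the explicit constants $\fa_5=((3+\sqrt 5)/960)^{1/4}$ and $\fb_5=(3-\sqrt 5)/2$. The new feature arises at $q_0=\zeta_5^k$: computing the residue of $\Gamma(s)D_{q_0}(s)t^{-s}$ at $s=1$ via Gauss sums and using the closed form $L(2,\chi_5)=4\pi^2/(25\sqrt 5)$, an arithmetic coincidence specific to $p=5$ shows that this leading singular coefficient equals the one at $q_0=1$ exactly when $q_0=\zeta_5^{\pm 1}$, and is strictly smaller when $q_0=\zeta_5^{\pm 2}$. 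Consequently only the conjugate pair $\{\zeta_5,\zeta_5^4\}$ contributes at the leading exponential order, producing a real oscillation $2\Re(C\,\zeta_5^{-n})=2|C|\cos(\tfrac{2\pi n}{5}+\arg C)$ for an explicit complex $C$ built from the Gauss sum, Hurwitz zeta values $\zeta(0,a/5)$ and $\zeta(-1,a/5)$, and $L_1(\chi_5)$.

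A trigonometric reduction using $\cos(2\pi/5)=(\sqrt 5-1)/4$ and $\cos(4\pi/5)=-(\sqrt 5+1)/4$, together with elementary manipulations of the golden ratio, simplifies $2|C|$ to $\sqrt{2(5-\sqrt 5)}$ and $\arg C$ to $-\pi/10$, yielding the stated $\fd_5\cos(\tfrac{2\pi n}{5}-\tfrac{\pi}{10})$ term. The minor-arc contribution is bounded by the Montgomery--Vaughan-type estimate on exponential sums with multiplicative coefficients referenced in the abstract, producing the $O(n^{-1/5})$ error. I expect the most delicate step to be verifying the equality of leading singular coefficients at $q_0=1$ and $q_0=\zeta_5^{\pm 1}$ (a nontrivial identity depending on $L(2,\chi_5)$ and $\chi_5(2)=-1$) and then extracting the precise phase $-\pi/10$ from the combined Gauss-sum and Hurwitz-zeta data; the closely related sign check confirming that the potential second cosine from $\zeta_5^{\pm 2}$ falls strictly below the leading exponential rate must also be carried out carefully.
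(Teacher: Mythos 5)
Your outline is essentially the paper's own route: the circle method with principal arcs at $1$, $-1$, and $e(\pm\tfrac{1}{5})$; a Mellin/Hurwitz-zeta analysis of $\log\Phi$ near each rational point using Gauss sums; the computation (via $L(2,\chi_5)=4\pi^2\sqrt{5}/125$ and $\chi_5(2)=-1$) that the leading singular coefficient at $e(\pm\tfrac{1}{5})$ equals the one at $\pm1$ while the one at $e(\pm\tfrac{2}{5})$ is strictly smaller (in the paper this is $V_5(1)=V_5(4)=1$ and $V_5(2)=V_5(3)=-\tfrac{7}{5}$ from Proposition \ref{prop:Major}); and the pairing of the two conjugate arcs into a single cosine whose phase comes from the purely imaginary constant term $\Lambda_5(\pm1)=\pm\pi/10$ in the expansion of $\Psi(\rho e(\pm\tfrac{1}{5}+\beta))$, with the explicit constants then following from $\sqrt{5}\,L_1(\chi_5)=\log\tfrac{3+\sqrt{5}}{2}$, exactly as you indicate.

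Two steps as you state them would not go through. First, you assign every $\alpha$ outside the six arcs around $1,-1,\zeta_5^k$ to ``minor arcs \ldots bounded by the Montgomery--Vaughan-type estimate.'' That estimate (Theorem \ref{thm:Minor}) applies only to $\alpha$ satisfying the minor-arc Diophantine condition; neighborhoods of rationals $a/q$ with $3\le q\le X^{1/3}$ and $q\ne 5$ (for instance $q=3,4,8,10$) are major arcs on which it gives no saving, and one must show separately that $\Re\Psi(\rho e(\alpha))\le \delta\,\mathfrak{c}X$ for some $\delta<1$ there. In the paper this requires the closed forms of $V_q(a)$ for \emph{all} $q$ (Proposition \ref{prop:Major}) together with the uniform comparison of Lemma \ref{lem:RePsiBB}, not just the computation at $q=5$. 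Second, the relative error $O(n^{-1/5})$ does not come from the minor arcs: the minor and non-principal major arcs contribute an amount smaller than the main term by a factor $e^{-\delta\sqrt{\mathfrak{c}n}}$ (Proposition \ref{prop:pncReduc}), and the $n^{-1/5}$ arises from the saddle-point expansion of the four principal-arc integrals (Proposition \ref{prop:MainInt}). Neither point changes the architecture of your argument, but both must be supplied for a complete proof.
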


Ignoring the error term $O(n^{-1/5})$ in formula \eqref{eq:int:P5Form}, we see that the behavior of $\fp(n,\xc_5)$ is essentially dictated by two core parts, namely the exponential term
    \[
        \fa_5 n^{-\fr34}\exp\!\Big(\tf12\xk\rt{\tf45n}\,\Big)
    \]
and the $10$-periodic ``signed'' term
    \[
        \fS(n) := 1 + (-1)^n\bigg(\fr{3-\rt{5}}{2}\bigg) + \rt{2(5-\rt{5})} \, \cos\lf(\fr{2\pi n}{5}-\fr{\pi}{10}\rh).
    \]
Computing the values of $\fS(n)$ for $1 \leq n \leq 10$, it is surprising to find that 
    \[
        \fS(2) = 0 \qquad\text{and}\qquad \fS(n)\neq 0 \quad\text{for $1 \leq n \leq 10$ with $n\neq 2$}. 
    \]
Indeed, since
    \[
        \cos\lf(\fr{7\pi}{10}\rh) = -\fr12\rt{\fr{5-\rt{5}}2},
    \]
when $n=2$ we have
    \[
        \fS(2) = \fr{5-\rt{5}}{2} - \fr{\rt{(5-\rt{5})^2}}{2} = 0.
    \]
Although formula \eqref{eq:int:P5Form} is not an exact formula for $\fp(n,\xc_5)$, the fact that $\fS(10j+2)=0$ for $j\geq0$ provides a heuristic explanation for the following surprising result, established in \cite{daniels2024vanishing} using $q$-series methods.

\begin{theorem}[\cite{daniels2024vanishing}*{Thm.\ 1.1}]
    \label{thm:vanishing}
    One has $\fp(10j+2,\xc_5) = 0$ for all $j \geq 0$.
\end{theorem}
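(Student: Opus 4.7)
The plan is to analyze the generating function directly. Since $χ_5$ takes values in $\{0, \pm 1\}$ and vanishes on multiples of $5$, one has
\[
    F(q) := \sum_{n \ge 0} 𝔭(n,χ_5)\,q^n \;=\; \prod_{m=1}^{\infty}\frac{1}{1-χ_5(m)\,q^m} \;=\; \frac{1}{(q,q^4;q^5)_\infty \,(-q^2,-q^3;q^5)_\infty}.
\]
The first step is to convert $F(q)$ into a cleaner product form. Applying the classical identity $(-a;q)_\infty = (a^2;q^2)_\infty/(a;q)_\infty$ and splitting each factor $(q^j;q^5)_\infty$ by the parity of its index to rebase in $q^{10}$, one obtains
\[
    F(q) \;=\; \frac{(q^2,q^3,q^7,q^8;q^{10})_\infty}{(q,q^9;q^{10})_\infty\,(q^4,q^6;q^{10})_\infty^{2}}.
\]

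Next, I would apply the Jacobi triple product identity with $q \mapsto q^{10}$ to each paired factor $(q^j, q^{10-j}; q^{10})_\infty$, expressing it as a two-sided theta series
\[
    \theta_j(q) \;:=\; \sum_{n \in \zz} (-1)^n q^{5n^2 + (j-5)n}, \qquad j \in \{1,2,3,4\}.
\]
Writing $\phi(q) := (q^{10};q^{10})_\infty$, this gives the compact representation $F(q) = \theta_2\,\theta_3\,\phi \,/\, (\theta_1\,\theta_4^{\,2})$.

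To isolate the subsequence at residue $2 \mmod{10}$, I would apply the standard root-of-unity filter: with $\zeta := e^{2π i/10}$,
\[
    \sum_{j \ge 0} 𝔭(10j+2, χ_5)\, q^{10j+2} \;=\; \frac{1}{10} \sum_{k=0}^{9} \zeta^{-2k}\, F(\zeta^{k} q),
\]
and the goal becomes showing this expression vanishes identically in $q$. Substituting the theta-series form of $F$, each $F(\zeta^{k} q)$ becomes a rational expression in theta series with twisted Gauss-sum coefficients, and the vanishing should reduce to an identity among theta constants, plausibly a consequence of Watson's quintuple product identity (which is natural at the modulus $5$).

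The hardest step will be establishing this final cancellation. As the author emphasises, the prime $5$ is distinguished by the presence of Rogers--Ramanujan-type identities, so one expects the requisite theta identity to be classical in spirit but nontrivial to pin down. An alternative, purely combinatorial route would be to construct a sign-reversing involution on the partitions of $10j+2$ with no part divisible by $5$ that toggles the parity of the count of parts $\equiv \pm 2 \mmod 5$; but designing such an involution is subtle precisely because the vanishing is specific to the residue class $2 \mmod{10}$ and does not hold for arbitrary $n \equiv 2 \mmod 5$.
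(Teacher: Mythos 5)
Your setup is algebraically sound as far as it goes: the product form
\[
    F(q) \;=\; \frac{(q^2,q^3,q^7,q^8;q^{10})_\infty}{(q,q^9;q^{10})_\infty\,(q^4,q^6;q^{10})_\infty^{2}}
\]
is correct, the Jacobi triple product does give $F = \theta_2\theta_3\phi/(\theta_1\theta_4^2)$ with your $\theta_j$, and the root-of-unity filter is the right way to isolate $n\equiv 2\ (\mathrm{mod}\ 10)$. But the proposal stops exactly where the theorem begins. The entire content of the statement is the identity you defer to at the end: you assert that the filtered sum ``should reduce to an identity among theta constants, plausibly a consequence of Watson's quintuple product identity,'' but no identity is exhibited, no cancellation is verified, and no reason is given why the ten twisted terms $\zeta^{-2k}F(\zeta^k q)$ sum to zero rather than to some nonzero theta quotient. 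Evaluating $\theta_j(\zeta^k q)$ introduces genuinely new theta series (the exponent $5n^2+(j-5)n$ is not invariant modulo $10$ in any simple way), so the claimed reduction is not routine bookkeeping; as written, this is a research plan, not a proof. The alternative combinatorial route you mention (a sign-reversing involution) is likewise only named, not constructed.

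For comparison: the paper under review does not prove this statement at all. Theorem \ref{thm:vanishing} is imported from \cite{daniels2024vanishing}, and the present paper offers only a heuristic explanation, namely that the $10$-periodic factor $\fS(n)$ in the asymptotic formula of Theorem \ref{thm:P5} satisfies $\fS(10j+2)=0$; since that formula carries an error term $O(n^{-1/5})$ relative to the main term, the circle-method machinery here can never certify exact vanishing, and the author says so explicitly. So you are right that an exact $q$-series (or combinatorial) argument is what is needed, and your reduction to a theta-quotient identity is a reasonable opening move in that direction --- but until the final cancellation is actually established, the proof has a genuine gap at its only essential step.
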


We now consider $\fp(n,\xc_2)$, recalling that
    \[
        \xc_2(n) = \begin{cases}
            1 & n\equiv \pm1\mmod{8},\\
            -1 & n\equiv \pm3 \mmod{8}, \\
            0 & \text{otherwise}.
        \end{cases}
    \]
Like our formula for $\fp(n,\xc_5)$, our formula for $\fp(n,\xc_2)$ is unique among those for other $\fp(n,\xc_p)$. Specifically, the dominant asymptotic term for $\fp(n,\xc_2)$ is a single 8-periodic cosine term, rather than the common terms such as $1+(-1)^n\fb_p$ (or similar quantities).

\begin{theorem} 
    \label{thm:Kron}
    As $n\to\infty$, one has
    \[
        \fp(n,\xc_2) = \fa_2 n^{-\fr34} \exp\!\Big(\tf12\xk\rt{\tf{11}{16}n}\,\Big) \fS_2(n),
    \]
    where
    \<
        \label{eq:fS2}
        \fS_2(n) = \cos\lf(\fr{2\pi n}{8}-\fr{3\pi}{16}\rh)
        + \fb_2\lf[1 + \fr{(-1)^n}{1+\rt2}\rh]\exp\!\Big(\!\!-\!\tfr12\xk\rt{\tf12n}\Big[\rt{\tfr{11}{8}}-1\Big]\!\Big),
    \>
    and
    \[
        \xk=\pi\rt{\tf23}, \quad \fa_2 = \pfr{11}{384}^{\fr14}, \quad\text{and}\quad \fb_2 = \rt{\fr{1+\rt2}{2\rt{11}}}.
    \]
\end{theorem}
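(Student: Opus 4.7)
The plan is to apply the same Cauchy-integral / saddle-point framework used to prove Theorems \ref{thm:P14} and \ref{thm:P34}, starting from
\[
    F(q) \;=\; \sum_{n\geq 0}\fp(n,\chi_2)\,q^n \;=\; \prod_{j=1}^\infty \frac{1}{1-\chi_2(j)\,q^j}
\]
together with the Cauchy representation $\fp(n,\chi_2) = \frac{1}{2\pi i}\oint_{|q|=r} F(q)\,q^{-n-1}\,dq$. The essential new feature, which accounts for the unusual shape of $\fS_2(n)$, is that $\chi_2$ has conductor $8$ rather than prime conductor, so the relevant major arcs live near several different $8$th roots of unity and must be handled in parallel.

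First I would expand $\log F(q) = \sum_{m\geq 1} m^{-1}\sum_j \chi_2(j)^m q^{jm}$ and, at each candidate point $q = \zeta_8^k e^{-t}$ with $0 \le k < 8$, apply Mellin inversion to the inner sums. This reduces the leading coefficient $A_k$ in the expansion
\[
    \log F(\zeta_8^k e^{-t}) \;=\; \frac{A_k}{t} + O\!\left(\log\tfrac{1}{t}\right)
\]
to a convergent sum over $m$ involving the period-$8$ averages $\frac{1}{8}\sum_{r=1}^{8}\chi_2(r)^m \zeta_8^{rmk}$, which can be evaluated in closed form using the Gauss sum of $\chi_2$ (whose value is $2\sqrt{2}$) together with $L(2,\chi_2) = \pi^2\sqrt{2}/16$. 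The outcome is that $A_k = 11\pi^2/384$ at the primitive $8$th roots $\zeta_8^{\pm 1}$ (producing the dominant exponential $\exp(\tfrac12\kappa\sqrt{(11/16)n})$), $A_k = \pi^2/48$ at $q = \pm 1$ (producing the secondary exponential $\exp(\tfrac12\kappa\sqrt{n/2})$), and $A_k < 0$ at $\zeta_8^{\pm 3}$, so these last contribute nothing of exponential size.

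Next I would apply Cauchy's formula, split the contour into major arcs at $\zeta_8^{\pm 1}$ and at $\pm 1$ together with minor arcs, estimate the minor arcs by the Montgomery--Vaughan-type exponential-sum bound announced in the abstract, and perform the standard saddle-point evaluation on each major arc, reading off the subleading polynomial factor and phase from the Mellin expansion. Combining the two complex-conjugate saddles at $\zeta_8$ and $\zeta_8^{-1}$ produces the real cosine $\cos(2\pi n/8 - 3\pi/16)$, with the phase $3\pi/16$ emerging as a composite of $\arg(\zeta_8^n)$ and the subleading Mellin constant at $\zeta_8$ (which in turn involves $L(1,\chi_2)$ and $L'(1,\chi_2)$). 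Combining the saddles at $q = +1$ and $q = -1$ produces the remaining $\fb_2$-term, with the ratio $1/(1+\sqrt{2})$ arising as the ratio of the subleading Mellin constants at $q = -1$ and $q = +1$.

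The main obstacle is the bookkeeping of complex phases: verifying that the four surviving saddle contributions reassemble exactly into the real expression $\fS_2(n)$, and that the explicit constants $\fa_2 = (11/(2^7\cdot 3))^{1/4}$, $\fb_2 = \sqrt{(1+\sqrt{2})/(2\sqrt{11})}$, and $3\pi/16$ fall out correctly from the Mellin analysis. Beyond this, the structure of the argument parallels that of Theorems \ref{thm:P14} and \ref{thm:P34}.
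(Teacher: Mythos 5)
Your plan is essentially the paper's proof: the paper computes exactly your $A_k$'s via Lemma \ref{lem:2:Vq} (finding $V_8(1)=V_8(7)=\tfrac{11}{8}$, $V_1(1)=V_2(1)=1$, and negative values at the remaining eighth roots), disposes of the minor arcs with Theorem \ref{thm:Minor}, obtains the Mellin expansions \eqref{eq:2:Psi18}--\eqref{eq:2:Psipm1}, evaluates each principal arc by Proposition \ref{prop:MainInt}, and recombines the conjugate arcs at $e(\pm\tfrac18)$ into the cosine and the arcs at $\pm1$ into the $\mathfrak{b}_2$-term. Two points in your sketch deserve correction, one of them substantive.

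The substantive one: a single circle $|q|=r$ cannot carry ``the standard saddle-point evaluation on each major arc,'' because the leading Mellin coefficients differ ($\tfrac{11}{8}\cdot\tfrac{\pi^2}{48}$ at $e(\pm\tfrac18)$ versus $\tfrac{\pi^2}{48}$ at $\pm1$), so the optimal radii differ: $X_8=\sqrt{8n/(11\,\pi^2/48)}$ for the dominant arcs but $X=\sqrt{48\,n/\pi^2}$ for the arcs at $\pm1$, whence $\rho_8<\rho$. Since the theorem you are proving retains the exponentially smaller $\mathfrak{b}_2$-term with the precise ratio $1/(1+\sqrt2)$, you must actually evaluate the $\pm1$ arcs at their own saddle radius; the paper does this by the arc-transference contour of section \ref{sec:transference} (two circular families joined by radial segments, Figure \ref{fig:KronContour}), and your proposal omits this step entirely. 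Without it, the near-$\pm1$ arcs are being integrated off their saddle radius and your claimed extraction of $\mathfrak{b}_2$ and of $1/(1+\sqrt2)$ does not follow. The minor point: the phase $\tfrac{3\pi}{16}$ does not come from $L(1,\chi_2)$ and $L'(1,\chi_2)$. Since $\chi_2$ is even, $L(0,\chi_2)=0$, so there is no logarithmic term at $e(\pm\tfrac18)$ and the constant $\pm\tfrac{3i\pi}{16}$ arises from the two odd characters modulo $8$ through $L(0,\psi)L(1,\bar\psi)$-type residues, exactly as the constant $\pm\tfrac{i\pi}{10}$ arose from $\omega,\bar\omega$ in section \ref{sec:5}; it is instead at $\pm1$ that $L(1,\chi_2)$ enters (via $L'(0,\chi_2)=\log(1+\sqrt2)$), producing the constants whose ratio is $1/(1+\sqrt2)$.
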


Motivated by Theorem \ref{thm:vanishing}, we say that a sequence $(a_n)_\nn$ \emph{vanishes on some arithmetic progression} if $a_{mj+r}=0$ for some $m \in \nn$, some $0 \leq r < m$, and all $j \geq 0$; in such a case we may say that $a_n$ ``vanishes for all $n\equiv r\mmod{m}$''. Thus $\fp(n,\xc_5)$ vanishes for all $n\equiv 2\mmod{10}$.

As $n$ runs from $1$ to $8$, the quantity $\cos(\fr{2\pi }{8}n-\fr{3\pi}{16})$ never vanishes; consequently, the signed term $\fS_2(n)$ in \eqref{eq:fS2} never vanishes either, and it follows that $(\fp(n,\xc_2))_\nn$ does not vanish on any arithmetic progression. Through this lens of vanishing on arithmetic progressions, the results of Corollaries \ref{cor:P58asymp} and \ref{cor:P34Asymp} and Theorem \ref{thm:Kron} combine to yield the following interesting result.

\begin{theorem}
    If $p$ is an odd prime such that $p\neq 5$ and $p \not\equiv 1 \mmod{8}$, then the sequence $(\fp(n,\xc_p))_\nn$ does not vanish on any arithmetic progression. Similarly, the sequence $(\fp(n,\xc_2))_\nn$ does not vanish on any arithmetic progression.
\end{theorem}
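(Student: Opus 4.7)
The plan is to reduce non-vanishing on arithmetic progressions to \emph{eventual} non-vanishing of the sequences, and then to read off eventual non-vanishing directly from the asymptotic results already quoted. Indeed, any arithmetic progression $\{mj+r : j \geq 0\}$ contains arbitrarily large values of $n$, so if $\mathfrak{p}(n,\chi) = 0$ for every $n$ in such a progression, then $\mathfrak{p}(n,\chi)$ must equal $0$ for some arbitrarily large $n$. It therefore suffices to show that in each case of the theorem, $\mathfrak{p}(n,\chi) \neq 0$ for all sufficiently large $n$.

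For the first assertion I would split into cases based on $p \bmod 8$. If $p \equiv 3$ or $p \equiv 7 \mmod{8}$, Corollary \ref{cor:P34Asymp} gives
\[
    \mathfrak{p}(n,\chi_p) \asymp n^{\sqrt{p}L_1(\chi)/(4\pi)-3/4} \exp\!\bigl(\tfrac12\kappa\sqrt{(1-\tfrac1p)n}\,\bigr).
\]
Unpacking the definition $a_n \asymp b_n$, the right-hand side being $O(\mathfrak{p}(n,\chi_p))$ means $|\mathfrak{p}(n,\chi_p)|$ is bounded below by a positive constant multiple of the positive, unbounded expression above, for all sufficiently large $n$; in particular $\mathfrak{p}(n,\chi_p) \neq 0$ eventually. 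If $p \equiv 5 \mmod{8}$ and $p \neq 5$, the same argument applies using Corollary \ref{cor:P58asymp}. Together with the hypothesis $p \neq 5$ this covers every residue class modulo $8$ except $p \equiv 1$.

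For the $\chi_2$ assertion I would apply Theorem \ref{thm:Kron}, so that
\[
    \mathfrak{p}(n,\chi_2) = \mathfrak{a}_2 \, n^{-3/4}\exp\!\bigl(\tfrac12\kappa\sqrt{\tfrac{11}{16}n}\,\bigr)\mathfrak{S}_2(n),
\]
and then argue $|\mathfrak{S}_2(n)|$ is bounded below by a positive constant for all large $n$. Since $\sqrt{11/8} > 1$, the second summand in $\mathfrak{S}_2(n)$ carries an exponential factor $\exp\!\bigl(-\tfrac12\kappa\sqrt{n/2}\,(\sqrt{11/8}-1)\bigr)$ that tends to $0$. Meanwhile the first summand $\cos(\tfrac{2\pi n}{8}-\tfrac{3\pi}{16})$ depends only on $n \bmod 8$; the equation $\tfrac{2\pi n}{8}-\tfrac{3\pi}{16}=\tfrac{\pi}{2}+k\pi$ has no integer solution $n$, so this cosine takes a finite set of nonzero values, hence has a strictly positive minimum modulus $c_0>0$. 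Once $n$ is large enough that the decaying term is smaller than $c_0/2$, we conclude $|\mathfrak{S}_2(n)| \geq c_0/2$ and thus $\mathfrak{p}(n,\chi_2) \neq 0$.

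The argument is essentially a direct corollary of the previously established asymptotics, so there is no serious obstacle. The one point that warrants care is the last case: the $\mathfrak{S}_2(n)$ expression is not monotone and one must genuinely use that the cosine portion is $8$-periodic and vanishes on no integer residue class, together with the fact that the remaining term decays \emph{exponentially} (not merely polynomially) in $\sqrt{n}$, to get a uniform positive lower bound for large $n$.
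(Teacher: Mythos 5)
Your proposal is correct and follows essentially the same route as the paper, which obtains the result by combining Corollaries \ref{cor:P58asymp} and \ref{cor:P34Asymp} for the odd primes $p\not\equiv1\,(\mathrm{mod}\,8)$, $p\neq5$, with Theorem \ref{thm:Kron} and the observation that $\cos\bigl(\tfrac{2\pi n}{8}-\tfrac{3\pi}{16}\bigr)$ never vanishes at integers, so that $\fS_2(n)$ is eventually bounded away from zero. Your explicit reduction to eventual non-vanishing along the progression simply spells out what the paper leaves implicit.
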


Although outside the scope of our discussions, we mention another example of a signed partition sequence that vanishes on arithmetic progressions.

\begin{theorem}[\cite{daniels2024vanishing}*{Thm.\ 1.1}]
    For all partitions $\pi = (a_1,a_2,\ldots,a_k)$ of $n \in \nn$, let
        \[
            \xc_5^\dag(\pi) := (-1)^k \xc_5(a_1)\cdots \xc_5(a_k).
        \]
    Then $\fp(n,\xc_5^\dag)=0$ vanishes for all $n \equiv  6 \mmod{10}$.
\end{theorem}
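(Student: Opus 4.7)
The plan is to analyze the generating function
\[
F(q) := \sum_{n \geq 0}\mathfrak{p}(n, \chi_5^\dag)\, q^n
= \prod_{m \geq 1}\frac{1}{1+\chi_5(m)\, q^m}
= \prod_{k \geq 0}\frac{1}{(1+q^{5k+1})(1-q^{5k+2})(1-q^{5k+3})(1+q^{5k+4})},
\]
where the Euler-product form arises because $\chi_5^\dag$ is the multiplicative extension of $-\chi_5$ over parts, i.e.\ $\chi_5^\dag(\pi) = \prod_i(-\chi_5(a_i))$, so that $\mathfrak{p}(n,\chi_5^\dag) = \mathfrak{p}(n,-\chi_5)$ in the notation of the paper. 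The task reduces to showing that the coefficient of $q^{10j+6}$ in $F(q)$ vanishes for every $j \geq 0$.

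First I would clear the $(1+q^a)$ factors via $1/(1+q^a) = (1-q^a)/(1-q^{2a})$, producing a representation of $F(q)$ as a ratio of products $\prod(1-q^n)$ indexed by explicit residue classes modulo $10$. The resulting infinite product bears a close resemblance to the analogous generating function for $\mathfrak{p}(n, \chi_5)$---which, after the same substitution, can be expressed in terms of the Rogers--Ramanujan continued fraction $R(q) = (q;q^5)_\infty(q^4;q^5)_\infty / [(q^2;q^5)_\infty(q^3;q^5)_\infty]$---and the two generating functions differ by easily tracked factors. I would leverage this connection, together with classical $q$-series identities (Jacobi triple product, quintuple product, Ramanujan's mod-$5$ theta identities, and identities for $R(q)$), to recast $F(q)$ in Lambert form $q^c \sum_n \epsilon_n q^{g(n)}$ with $g$ a quadratic polynomial in $n$. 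The desired vanishing then follows from verifying $c + g(n) \not\equiv 6 \mmod{10}$ for all $n \in \zz$.

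A complementary, combinatorial route is a sign-reversing involution. Because $\chi_5^\dag(\pi) = 0$ whenever some part of $\pi$ is divisible by $5$, only partitions of $10j+6$ into parts coprime to $5$ contribute. One seeks a fixed-point-free involution $\sigma$ on this set with $\chi_5^\dag(\sigma(\pi)) = -\chi_5^\dag(\pi)$; a natural candidate is a rule that toggles a distinguished part of $\pi$ between the residue classes $\{1,4\}$ and $\{2,3\}$ modulo $5$, so that contributions to $\mathfrak{p}(10j+6, \chi_5^\dag)$ cancel pairwise.

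The main obstacle will be identifying the precise $q$-series identity (or involution) that channels the interplay between the mod-$5$ signs of $\chi_5$ and the $(-1)^k$ parity factor of $\chi_5^\dag$ into the mod-$10$ arithmetic of the exponents of $F(q)$, and in particular forces the vanishing at the single class $n \equiv 6 \mmod{10}$. This interaction sits outside the scope of any one textbook identity and will require a carefully arranged composition of classical transformations; in contrast to the simpler vanishing $\mathfrak{p}(10j+2,\chi_5)=0$ of Theorem \ref{thm:vanishing}, the parity factor here introduces extra signs that obstruct a direct appeal to Rogers--Ramanujan-type product expansions.
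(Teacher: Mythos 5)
Note first that the paper you were asked to match does not actually prove this statement: it is quoted from \cite{daniels2024vanishing}, and the machinery developed in the present paper (asymptotic formulae with $O(n^{-1/5})$ error terms) can only make such a vanishing plausible, never exact. Judged on its own terms, your proposal has a genuine gap: it is a plan rather than a proof. The setup is correct --- since $\chi_5^\dag(\pi)=\prod_i\bigl(-\chi_5(a_i)\bigr)$ you indeed have $\mathfrak{p}(n,\chi_5^\dag)=\mathfrak{p}(n,-\chi_5)$, and the Euler product
\[
F(q)=\prod_{k\geq 0}\frac{1}{(1+q^{5k+1})(1-q^{5k+2})(1-q^{5k+3})(1+q^{5k+4})}
\]
is the right generating function --- but everything after that is conditional. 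The entire content of the theorem lies in exhibiting the specific identity that converts this product into a series manifestly supported away from the class $6 \pmod{10}$, and you explicitly defer exactly that step (``the main obstacle will be identifying the precise $q$-series identity''). Moreover, even granting a representation $q^{c}\sum_{n}\epsilon_n q^{g(n)}$ with $g$ quadratic, checking $c+g(n)\not\equiv 6 \pmod{10}$ only suffices if distinct $n$ do not collide in exponent; if they can, the zero coefficients could arise from cancellation rather than absence of exponents, and ruling that out (i.e., showing the representation is genuinely a signed theta series) is precisely where the work lies.

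The combinatorial alternative has the same problem in different clothing. A rule that ``toggles a distinguished part between the residue classes $\{1,4\}$ and $\{2,3\}$ modulo $5$'' changes the number being partitioned unless the toggle is compensated elsewhere, and no compensation mechanism (nor a proof that the map is a fixed-point-free, sign-reversing involution) is given; an involution defined only through residues of parts modulo $5$ and the parity of the number of parts would also be insensitive to which residue class of $n$ modulo $10$ one is in, whereas the asserted vanishing is specific to $n\equiv 6\pmod{10}$, so any such candidate must secretly use the value of $n$ and is far from automatic. In short: correct generating-function reduction, reasonable strategy, but the decisive identity or involution --- the actual proof --- is missing, so the statement is not established by what you have written.
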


\subsection{A result on exponential sums}
For $f:\nn\to\cc$ with $|f|\leq1$, the generating function $\xF(z) = \xF(z,f)$ for $(\fp(n,f))_\nn$ is defined via
    \[
    	\xF(z) = \prod_{n=1}^\infty \pfr{1}{1 - f(n)z^{n}} = 1 + \sum_{n=1}^\infty \fp(n,f) z^n \qquad (|z|<1).
    \]
With this, by Cauchy's theorem we have
	\<
		\label{eq:pnfInt}
		\fp(n,f) = \fr{1}{2 \pi i}\int_{|z|=\xr} \xF(z) z^{-n-1} \, dz 
			= \xr^{-n} \int_{0}^{1} \xF(\xr e(\xa)) e(-n\xa) \, d\xa
	\>
for all $0<\xr<1$, where $e(\xa):=\exp(2\pi i\xa)$. In practice, one often analyzes the integrand in \eqref{eq:pnfInt} by examining a logarithm of $\xF(z)$ rather than $\xF(z)$ itself. Explicitly, the majority of our analyses consider the series $\xY(z) = \xY(z,f)$ defined via
    \<
        \label{eq:PsiDefin}
        \xY(z) = \sum_{k=1}^\infty \sum_{n=1}^\infty \fr{f^k(n)}{k} z^{nk} \qquad (|z| < 1),
    \>
so that 
    \[
        \xF(z) = \exp(\xY(z)).
    \]

As outlined above, our primary focus is on sequences $(\fp(n,\xc_p))_\nn$ and on functions $\xY(\xr e(\xa),\xc_p)$ for different primes $p$. However, because it may be of independent interest, in section \ref{sec:Minor} we establish the following general result using work of Montgomery and Vaughan \cite{montgomery1977exponential}.

\begin{theorem}
	\label{thm:Minor}
	Let $X$ be a large positive parameter, and let $\xa$ have the property that: if $|\xa-a/q| \leq 1/(qX^{2/3})$ for coprime $q\in\nn$ and $a\in\zz$, then $q > X^{1/3}$. Then, writing $\xr = \exp(-1/X)$, as $X\to\infty$ one has
		\[
			\xY(\xr e(\xa),f) = O(X/\log{X}),
		\]
    uniformly among multiplicative $f$ with $|f|\leq1$.
\end{theorem}

\subsection*{Acknowledgements}
The author would like to thank Trevor Wooley for suggesting this problem, for numerous helpful conversations, and for his financial support during part of this research. Thanks are additionally extended to David McReynolds and Alexandru Zaharescu for invaluable conversations and advice in preparing this paper.

\section{Notation and preliminaries}

The letter $p$ is reserved for odd primes, the notation $\xc=\xc_p$ is reserved for the Legendre symbol $(\fr{n}{p})$, and $\xc_2$ is reserved for the Kronecker symbol $(\fr{n}{2})$. The notation $\xc_0$ is reserved for principal Dirichlet characters, where the associated modulus is specified in context. To avoid confusion, we emphasize the following convention: Except in statements such as ``the Legendre symbol $(\fr{n}{p})$,'' fractional expressions $(\fr{a}{b})$ are never used to indicate a Legendre or Kronecker symbol; rather, these functions are always indicated by $\xc_p$ (or simply $\xc$) and $\xc_2$, respectively.

Expressions $O(g(x))$ with $g(x) > 0$ denote quantities bounded by some constant multiple of $g(x)$, and $f(x) \less g(x)$ holds if $f(x) = O(g(x))$ as $x\to\infty$. The relation $f(x)\asymp g(x)$ holds when $f(x) \ll g(x)$ and $g(x) \ll f(x)$. Expressions $X_0(A)$ indicate some constant $X_0 > 0$ depending on $A$. Statements involving $\xe$ are understood to hold for all sufficiently small $\xe > 0$ unless noted otherwise, and constants' dependencies on $\xe$ are generally suppressed.  

For real $\xa$ let $e(\xa)=\exp(2\pi i\xa)$, let $\|\xa\| = \min_{n\in\zz}|\xa-n|$, and let $\{\xa\} = \xa - \floor{\xa}$. For $f:\nn\to\cc$ let
    \[
        S_f(t,\xa) = \sum_{n \leq t} f(n)e(n\xa).
    \]
Expressions $n \equiv  a$ (mod $q$) are often abbreviated as $n \equiv  a \mod{q}$ in subscripts, and sums the form $\sum_{n\equiv a\mod{q}}$ or $\sum_{2\ssmid k}$ indicate sums over positive $n$ and $k$ satisfying the conditions $n\equiv a\mmod{q}$ and $2\mid k$, respectively.

Let $1 \leq Q \leq X$. For $q \in \nn$ and $0 \leq a \leq q$ with $(a,q)=1$ we define
	\[
		\fM(a/q) = \lf\{ \xa \in [0,1) : |\xa - a/q| \leq Q/(qX) \rh\}.
	\]
The \emph{major arcs} are those $\fM(\nfr{a}{q})$ with $q \leq Q$, and $\fM(X,Q)$ denotes the union of all major arcs. The connected components of $[0,1) \setminus \fM(X,Q)$ are the \emph{minor arcs}, and their union is denoted $\fm(X,Q)$. In many cases, only $\xa$ lying in small subsets of the arcs $\fM(\tf01)$, $\fM(\tf12)$, and $\fM(\tf11)$ significantly contribute to the integral \eqref{eq:pnfInt}, with all other $\xa$ contributing to an error term. Anticipating this, for $X,X_*>1$ we define
    \begin{align*}
        \fP &= \{ \xa \in [0,1) : \|\xa\| \leq 3/(8\pi X) \}, \\
        \fP_* &= \{\xa \in [0,1) : |\xa - \tf12| \leq 3/(8\pi X_*) \},
    \end{align*}
and, following Gafni \cite{gafni2021partitions}, we term the sets $\fP$ and $\fP_*$ the \emph{principal arcs}.

For $\rho, \rho_* \in (0,1)$ it is convenient to write
    \[
        X = \frac{1}{\log(1/\rho)} \qquad\text{and}\qquad X_* = \frac{1}{\log(1/\rho_*)},
    \]
so that
    \[
        \rho = e^{-1/X} \qquad\text{and}\qquad \xr_* = e^{-1/X_*},
    \]
and this convention is maintained throughout this paper.

By comparison with the series $\xY(z,1)$, for $|f|\leq1$ the series $\xY(z,f)$ in \eqref{eq:PsiDefin} converges absolutely for $|z|<1$. Thus, we exchange the sums in equation \eqref{eq:PsiDefin} and define
    \[
        F_k(z) = \sum_{n=1}^\infty f^k(n) z^n
    \]
for $k > 0$, so that
    \<
        \label{eq:PsiF}
        \xY(z) = \sum_{k=1}^\infty \sum_{n=1}^\infty \fr{f^k(n)}{k} z^{nk} = \sum_{k=1}^\infty \fr{1}{k} F_k(z^k).    
    \>
Often we analyze the sum \eqref{eq:PsiF} by considering the sum of terms with $k \leq K$ for some parameter $K$ and treating the remaining sum over $k > K$ as an error term. For this purpose we record the following lemma bounding these tail sums.

\begin{lemma}
    \label{F:lem:TailBB}
    Let $f:\nn\to\cc$ with $|f|\leq1$. For real $\xa$ and large $X$ and $K$, one has
        \[
            \sum_{k > K} \fr{1}{k} F_k(\xr^k e(k\xa)) \less \fr{X}{K}.
        \]
\end{lemma}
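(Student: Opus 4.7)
The plan is to bound $|F_k(\rho^k e(k\alpha))|$ by passing to absolute values and then to sum the resulting geometric series. Since $|f|\leq 1$ implies $|f^k(n)|\leq 1$ for every $k,n\geq 1$, the triangle inequality gives
\[
  \lf|F_k(\rho^k e(k\alpha))\rh| \leq \sum_{n=1}^\infty \rho^{nk} = \frac{\rho^k}{1-\rho^k} = \frac{1}{e^{k/X}-1},
\]
where the last equality uses the convention $\rho = e^{-1/X}$. Elementary bounds for $e^t-1$ (namely $e^t-1\geq t$ for $t\geq 0$, and $e^t-1\geq \tf12 e^t$ for $t\geq \log 2$) then yield the uniform estimate
\[
  \lf|F_k(\rho^k e(k\alpha))\rh| \ll \min\!\lf(\fr{X}{k},\ e^{-k/X}\rh) \qquad (k\geq 1).
\]

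Next I would split the sum over $k>K$ at the threshold $k=X$. The contribution from $K<k\leq X$ is handled by the bound $X/k$, giving
\[
  \sum_{K<k\leq X} \fr{1}{k}\cdot \fr{X}{k} \ll X \sum_{k>K} \fr{1}{k^2} \ll \fr{X}{K}.
\]
For the tail $k>\max(K,X)$, I would use the exponential bound and the geometric sum identity $\sum_{k>M} e^{-k/X} \ll X e^{-M/X}$ (valid since $1-e^{-1/X}\asymp 1/X$) to obtain
\[
  \sum_{k>\max(K,X)} \fr{e^{-k/X}}{k} \ll \fr{1}{\max(K,X)}\cdot X e^{-\max(K,X)/X} \ll \fr{X}{K}.
\]
Summing the two contributions gives the claimed bound $O(X/K)$.

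There is essentially no obstacle: the whole argument rests on the trivial bound $|f^k(n)|\leq 1$, and the only mild subtlety is ensuring the dichotomous estimate $\min(X/k,e^{-k/X})$ is applied correctly in each range of $k$. The structure of $f$ (multiplicativity, specific character values, etc.) plays no role here, which is precisely what makes this lemma a convenient workhorse for truncating the inner sum in \eqref{eq:PsiF} later in the paper.
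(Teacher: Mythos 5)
Your proof is correct and follows essentially the same route as the paper: pass to absolute values with $|f^k(n)|\leq 1$, sum the geometric series to get $|F_k|\leq \rho^k/(1-\rho^k)$, and then bound the tail over $k>K$. The paper is slightly more streamlined in that it applies the single bound $\rho^k/(1-\rho^k) = (e^{k/X}-1)^{-1} \ll X/k$ uniformly and concludes via $\sum_{k>K} X/k^2 \ll X/K$, whereas your split at $k=X$ into the regimes $X/k$ and $e^{-k/X}$ is a harmless but unnecessary refinement.
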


\begin{proof}
    For fixed $k$ we see that
        \[
            F_k(\xr^k e(k\xa)) = \sum_{n=1}^\infty f^k(n) \xr^{nk} e(nk \xa) \less \sum_{n=1}^\infty \xr^{nk} = \fr{\xr^k}{1 - \xr^k}.    
        \]
    Using the relation $\xr = e^{-1/X}$ it follows that
        \[
            \sum_{k > K} \fr{1}{k} F_k(\xr^k e(k\xa)) \less \sum_{k > K} \fr{1}{k} \, \fr{ e^{-k/X} }{1 - e^{-k/X} } = \sum_{k > K} \fr{1/k}{ e^{k/X} - 1 } \less \sum_{k > K} \fr{X}{k^2},    
        \]
    and the result follows.
\end{proof}

\subsection*{Structure}
The majority of this paper focuses on $\fp(n,\xc_p)$ for odd primes $p$, after which the special case of $\fp(n,\xc_2)$ is treated in section \ref{sec:Kron}.
In section \ref{sec:Minor} we analyze $\xY(\xr e(\xa),f)$ for $\xa$ in the minor arcs $\fm(X,Q)$, where $f$ is multiplicative and $|f|\leq1$, ultimately establishing Theorem \ref{thm:Minor}. After this, in sections \ref{sec:major} and \ref{sec:princ} we restrict our attention back to $\xY(z,\xc_p)$ for odd $p$, and therein derive asymptotic formulae for $\Psi(\xr e(\xa),\xc_p)$ as $\xr\to1^{-}$ for $\xa$ in the major arcs and principal arcs. 

In section \ref{sec:relations} we consider how the parameters $X$ and $X_*$ (and thus $\xr$ and $\xr_*$) should depend on $n$. Because the ``optimal'' radii of integration $\xr$ and $\xr_*$ may be unequal, in section \ref{sec:transference} we recall the ``arc transference'' of \cite{daniels2023mobius}*{sec.\ 9} and modify the contour of integration in \eqref{eq:pnfInt}. Although the contents of sections \ref{sec:Minor}--\ref{sec:transference} are applicable to the analysis of $\fp(n,\xc_5)$, in sections \ref{sec:Nonprincipal}--\ref{sec:AsymptoticsMod8} we must exclude the case of $\fp(n,\xc_5)$ until its specific treatment in section \ref{sec:5}. 

Thus, focusing on $\fp(n,\xc_p)$ for odd $p\neq 5$, in section \ref{sec:Nonprincipal} we use the results of the previous sections to establish that the integral for $\fp(n,\xc_p)$ is indeed dominated by those $\xa$ near to $0$, $\tf12$, and $1$, i.e., those $\xa$ in the principal arcs $\fP$ and $\fP_*$. 
Following this, in section \ref{sec:MainInts} we derive asymptotic formulae for these integrals over $\fP$ and $\fP_*$, which yields a ``crude'' asymptotic formula for $\fp(n,\xc_p)$ (again only for odd $p\neq 5$). In sections \ref{sec:Asymptotics} and \ref{sec:AsymptoticsMod8} this formula is refined by considering the possible residues of $p$ modulo $8$, and the formulae of Theorems \ref{thm:P14} and \ref{thm:P34} are established. 

In sections \ref{sec:5} and \ref{sec:Kron} we treat the special cases of $\fp(n,\xc_5)$ and $\fp(n,\xc_2)$, respectively, therein establishing Theorems \ref{thm:P5} and \ref{thm:Kron}, respectively. The first part of the appendix contains a number of elementary but tedious computations used to establish Proposition \ref{prop:Major}. Finally, in the second part of the appendix we include some useful formulae for explicitly computing $L_1(\xc_p)$ and related quantities.

\subsection*{A comment on the scope of this paper}
As discussed in the introduction, our primary focus in this paper is on asymptotic formulae for the Legendre-signed partition numbers $\fp(n,\xc_p)$ for odd primes, as well as a similar formula for $\fp(n,\xc_2)$. However, the majority of the techniques and derivations in this paper may be easily generalized and applied to, say, partition numbers $\fp(n,\xy)$ signed by general Dirichlet characters $\xy$. 

We restrict our focus to $\fp(n,\xc_p)$ and $\fp(n,\xc_2)$ here for the relative ease of the associated analyses and derivations. As the reader is likely to observe in section \ref{sec:intro}, even in these simple cases the formulae for $\fp(n,f)$ can involve unwieldy constants and expressions. In addition, in sections \ref{sec:5} and \ref{sec:Kron}, where $\fp(n,\xc_5)$ and $\fp(n,\xc_2)$ are treated, respectively, one sees that our applications of the Hardy-Littlewood method can noticeably vary even among cases where $f$ is a primitive quadratic character. As such, it is only natural that our attention be restricted here.

\section{The minor arcs}
\label{sec:Minor}

The goal of this section is the proof of Theorem \ref{thm:Minor}, which, by the definitions of $\fM(X,Q)$ and $\fm(X,Q)$, concerns $\xY(\xr e(\xa))=\xY(\xr e(\xa),f)$ for $\xa\in\fm(X,X^{1/3})$. Although the statement of Theorem \ref{thm:Minor} indicates that ultimately $Q:=X^{1/3}$, we begin with an undetermined $Q$ to better illustrate our derivations.

Recalling that $S_f(x,\alpha) = \sum_{n \leq x} f(n)e(n\alpha)$ and $\xY(\xr e(\xa)) = \sum_{k=1}^\infty k^{-1}F_k(\xr e(\xa))$, our plan is to use summation by parts to write
    \[
        F_k(\xr e(\xa)) = \sum_{n=1}^\infty f^k(n)\xr^{nk} e(nk\xa) = \fr{k}{X} \int_0^\infty e^{-xk/X}S_f(x,k\xa) \,dx,
    \]
and then estimate these $F_k$ by employing suitable bounds on $S_f(x,k\xa)$. The key to these bounds is the following result due to Montgomery and Vaughan.

\begin{lemma}[{\cite{montgomery1977exponential}*{Cor.\ 1}}]
	\label{lem:MVCor0}
	Suppose $\lf|\xa - a/q\rh| \leq q^{-2}$, $(a,q)=1$, and $2 \leq R \leq q \leq x/R$. Then
		\<
			\label{eq:MVCorBB0}
			S_f(x, \xa) \less \fr{x}{\log x} + \fr{x\log^{3/2}R}{R^{1/2}}
		\>
	uniformly for all multiplicative $f$ with $|f| \leq1$. In addition, the implicit constant in inequality \eqref{eq:MVCorBB0} does not depend on $q$ or $a$.
\end{lemma}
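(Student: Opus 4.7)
The plan is to follow the Vaughan-style bilinear decomposition used by Montgomery and Vaughan \cite{montgomery1977exponential}. The core idea is to reduce $S_f(x,\alpha)$ to bilinear sums over factorizations $n = km$ via a Dirichlet-series identity for $f(n)\log n$, to split these sums into a Type I part (small $m$) and a Type II part (large $m$), and to estimate each piece by a method tailored to the diophantine hypothesis on $\alpha$.

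First I would pass to a logarithmically weighted version of the sum. Since $f$ is multiplicative, differentiating the Euler product $\log F(s) = \sum_p \log F_p(s)$, where $F(s) = \sum f(n)n^{-s}$, yields the pointwise identity
\[
    f(n)\log n = \sum_{km=n} f(k)\,\xL_f(m),
\]
where $\xL_f$ is supported on prime powers and satisfies $|\xL_f(p^r)| \ll \log p$ uniformly in $r$ when $|f| \le 1$. Multiplying $S_f(x,\alpha)$ through by $\log x$ and writing $\log x = \log n + \log(x/n)$ reduces the problem to bounding the bilinear sum
\[
    B(x,\alpha) = \sum_{km \le x} f(k)\,\xL_f(m)\,e(km\alpha)
\]
together with a weighted remainder $\sum_{n \le x} f(n)\log(x/n)\,e(n\alpha)$, the latter handled by partial summation back to $S_f$ along the lines of a Gronwall-type inequality.

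Next I would dyadically split $B(x,\alpha)$ at a parameter $R$ into a Type I range ($m \le R$) and a Type II range ($m > R$). For Type I, fix $m$, treat the inner sum over $k$ as a geometric series bounded by $\min(x/m,\|m\alpha\|^{-1})$, and invoke the standard lemma that, when $|\alpha - a/q| \le q^{-2}$ with $(a,q)=1$,
\[
    \sum_{m \le R}\min\!\Big(\frac{x}{m},\frac{1}{\|m\alpha\|}\Big) \ll \Big(\frac{x}{q} + q + R\Big)\log(2qR).
\]
Under $R \le q \le x/R$ this gives a Type I bound of $\ll (x\log x)/R$. For Type II, I would apply Cauchy--Schwarz in the outer variable, expand the resulting square in $m$, and bound the off-diagonal contribution $\sum_{k \ne k'} f(k)\overline{f(k')}/\|(k-k')\alpha\|$ via the Montgomery--Vaughan extension of Hilbert's inequality, which controls $\sum_{r \ne s} a_r \overline{a_s}/\|\lambda_r - \lambda_s\|$ by the reciprocal of the minimum gap between the frequencies $\lambda_r \bmod 1$. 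The hypothesis $|\alpha - a/q| \le q^{-2}$, combined with $R \le q \le x/R$, supplies the needed spacing and yields a Type II bound of $\ll x(\log R)^{3/2}/R^{1/2}$.

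The main obstacle will be calibrating $R$ so that the Type I and Type II bounds combine into the claimed shape $x/\log x + x(\log R)^{3/2}/R^{1/2}$; this requires tracking every logarithmic factor carefully and absorbing lower-order terms (including those from the weighted remainder) back into the $x/\log x$ piece coming from trivial ``diagonal'' contributions. A secondary technicality is that $f$ is only assumed multiplicative, not completely multiplicative, so $\xL_f$ differs from $f\cdot\xL$ by a contribution supported on prime powers $p^r$ with $r \ge 2$; when reinserted, that discrepancy produces an error controlled by a sum over squarefull integers and is at most $O(\sqrt{x}\log x)$, which is comfortably absorbed. Once the parameters are tuned, the uniform bound \eqref{eq:MVCorBB0} follows.
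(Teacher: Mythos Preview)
Your approach is entirely different from the paper's. The paper does not prove this lemma from scratch: it quotes Montgomery--Vaughan's Theorem~1 (the rational case $\alpha=a/q$, stated here as Lemma~\ref{lem:MVThm}) as a black box and then transfers to nearby $\alpha$ by partial summation. Writing $e(n\alpha)=e(n\beta)e(n(\alpha-\beta))$ with $\beta=b/r$ a suitable rational (taking $b/r=a/q$ when $q>\sqrt{x}$, and otherwise choosing $b/r$ via Dirichlet's theorem with $r\le 2x/q$, distinguishing $r=q$ from $r\ne q$), one obtains $S_f(x,\alpha)\ll \mathcal{R}(x,r)+x/q^2$ for some $r\ge q/2$, where $\mathcal{R}(x,r)$ is the right side of Lemma~\ref{lem:MVThm}. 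A short computation using $\phi(r)\gg r/\log\log r$ together with $R\le q\le x/R$ then collapses $\mathcal{R}(x,r)$ into the claimed shape. The whole deduction is under a page.

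What you sketch is instead a direct Type~I/II attack on $S_f$ itself---essentially an attempt to reprove the underlying Theorem~1 rather than deduce the corollary from it. That theorem is substantially harder, and your outline leaves the decisive steps unspecified. The off-diagonal Type~II estimate via the Hilbert inequality must be executed precisely enough to yield exactly $x(\log R)^{3/2}/R^{1/2}$ after summing over dyadic ranges, and the ``Gronwall-type'' handling of the $\log(x/n)$-weighted remainder is nonstandard and needs a real argument (that remainder is $\int_1^x S_f(u,\alpha)\,du/u$, so you are bootstrapping the very quantity you seek to bound). Moreover, Montgomery--Vaughan's own proof of Theorem~1 in \cite{montgomery1977exponential} does not proceed by a Vaughan-identity split of $f(n)\log n$; it passes through Dirichlet characters and large-sieve/mean-value inputs, so the phrase ``bilinear decomposition used by Montgomery and Vaughan'' mischaracterizes their method. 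If you want the short route, cite Theorem~1 and do the partial-summation transfer as the paper does; if you genuinely want a self-contained bilinear proof, expect considerably more work than your sketch indicates.
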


Prior to applying Lemma \ref{lem:MVCor0}, we must address a subtle issue concerning the quantities $S_f(x,k\xa)$. Namely, as $k$ runs over $\nn$ the quantities $k\xa$ may fall outside our minor arcs $\fm(X,Q)$, critically weakening our ``savings'' from the above lemma. To work around this issue, we reuse methods from \cite{daniels2023mobius}*{sec.\ 4}. 

First, we suppose that there exist quantities $K$, $\cX$, and $\cQ$ such that $k\xa \in \fm(\cX,\cQ)$ for $\xa \in \fm(X,Q)$ and $k \leq K$. Next, we observe that the definitions of the sets $\fM(\cX,\cQ)$ and $\fm(\cX,\cQ)$ imply that if $x\geq\cX$, then $\fm(\cX,\cQ) \subset \fm(x,\cQ)$. Using these two ideas together, we deduce that for all $x\geq\cX$, one has
    \<
        \label{eq:MinorSetup}
        \sup_{\substack{\xa \in \fm(X,Q) \\ k \leq K}}\lf| S_{f}(x, k\xa) \rh|\leq\sup_{\xq \in \fm(\cX,\cQ)} \lf| S_{f}(x, \xq) \rh|\leq\sup_{\xq \in \fm(x,\cQ)} | S_{f}(x,\xq) |.
    \>

We now consider how $K$, $\cX$, and $\cQ$ ought to be selected. Loosely speaking, we prioritize decreasing $\cX$ at the cost of increasing $\cQ$, demonstrating the mechanism of this ``exchange'' in the following lemma.

\begin{lemma}
\label{lem:MinorIncl}
	Fix $\xk \in (0,1)$, let $1 \leq Q \leq X$, let $\xa \in \fm(X,Q)$, and let $1 \leq k \leq Q^\xk$. If $(a_k,q_k)=1$ and 
		\[
			\lf|k\xa-\fr{a_k}{q_k}\rh|\leq\fr{Q^{1-\xk}}{q_kX},    
		\]
	then $q_k > Q^{1-\xk}$; in particular $\{k\xa\} \in \fm(X,Q^{1-\xk})$. Finally, if $0 < \nu < 1-\xk$ then 
		\[
			\fm(X,Q^{1-\xk}) \subset \fm(XQ^{-\nu},Q^{1-\xk-\nu}).
		\]
\end{lemma}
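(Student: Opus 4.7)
The plan is to prove all three assertions by contrapositive, using the definition that $\alpha \in \fm(X,Q)$ iff $|\alpha - a/q| > Q/(qX)$ for every coprime pair $(a,q)$ with $q \leq Q$.

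For the first assertion I would argue by contradiction: assuming $q_k \leq Q^{1-\kappa}$, I divide the approximation hypothesis through by $k$ to obtain $|\alpha - a_k/(kq_k)| \leq Q^{1-\kappa}/(kq_k X)$, then write $a_k/(kq_k) = a/q$ in lowest terms. Since $\gcd(a_k, q_k) = 1$, the reduced denominator $q$ divides $kq_k$, so $q \leq kq_k \leq Q^\kappa \cdot Q^{1-\kappa} = Q$. Using $q \leq kq_k$ together with $Q^{1-\kappa} \leq Q$, the right-hand side above is bounded by $Q/(qX)$, so $\alpha$ lies in $\fM(a/q) \subseteq \fM(X,Q)$, contradicting $\alpha \in \fm(X,Q)$.

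The ``in particular'' statement should then follow directly: if $\{k\alpha\}$ belonged to some major arc $\fM(a/q)$ with $q \leq Q^{1-\kappa}$, then shifting the numerator by the integer $q\lfloor k\alpha \rfloor$ produces a coprime pair $(a',q)$ with
\[
|k\alpha - a'/q| = |\{k\alpha\} - a/q| \leq Q^{1-\kappa}/(qX),
\]
and the first assertion (applied with $q_k = q$) forces $q > Q^{1-\kappa}$, a contradiction.

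For the final inclusion, the key observation is the algebraic identity $Q^{1-\kappa-\nu}/(q\cdot XQ^{-\nu}) = Q^{1-\kappa}/(qX)$, so the defining inequality for $\fm(X, Q^{1-\kappa})$ immediately delivers the defining inequality for $\fm(XQ^{-\nu}, Q^{1-\kappa-\nu})$ for any coprime $(a,q)$ with $q \leq Q^{1-\kappa-\nu} \leq Q^{1-\kappa}$.

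The only step that needs genuine care is the bookkeeping in the first assertion: checking simultaneously that the reduction of $a_k/(kq_k)$ to lowest terms yields a denominator within the major-arc range ($q \leq Q$) \emph{and} that the approximation quality $Q^{1-\kappa}/(kq_k X)$ remains within $Q/(qX)$. Both facts rest on $q \leq kq_k$ together with the split $k \leq Q^\kappa$, $q_k \leq Q^{1-\kappa}$, so I expect no conceptual obstacle here — just careful denominator arithmetic.
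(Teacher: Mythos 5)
Your proposal is correct and takes essentially the same route as the paper's proof: reduce $a_k/(kq_k)$ to lowest terms, observe that the reduced denominator is at most $kq_k \leq Q^{\kappa}\cdot Q^{1-\kappa} = Q$, transfer the approximation inequality, and contradict the minor-arc property of $\alpha$; the fractional-part shift for the ``in particular'' claim and the identity $Q^{1-\kappa-\nu}/(qXQ^{-\nu}) = Q^{1-\kappa}/(qX)$ for the final inclusion are likewise the paper's own steps. The only cosmetic differences are that you organize the first assertion as a contradiction rather than the paper's direct deduction ($k'q_k > Q$, hence $q_k > Q/k' \geq Q^{1-\kappa}$) and that you dispense with the paper's invocations of Dirichlet's approximation theorem, which are not essential to the statement.
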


\begin{proof}
    The proof is an elementary application of Dirichlet approximation and the definitions of $\fM(X,Q)$ and $\fm(X,Q)$, as shown in \cite{daniels2023mobius}*{Lem.\ 4.6}.
\end{proof}

We now return to Lemma \ref{lem:MVCor0}; for convenience we provide a proof of said lemma using the proof outlined in \cite{montgomery1977exponential}, in which Lemma \ref{lem:MVCor0} is demonstrated as a corollary of the following theorem.

\begin{lemma}[{\cite{montgomery1977exponential}*{Thm.\ 1}}]
	\label{lem:MVThm}
	Suppose that $(a,q)=1$ and $q \leq x$. One has
		\[
			S_f(x,a/q) \less \fr{x}{\log 2x} + \fr{x}{\phi(q)^{1/2}} + \fr{x \log^{3/2}(2x/q)}{(x/q)^{1/2}}
		\]
	uniformly for all multiplicative $f$ with $|f| \leq1$, where $\xf(q)$ is the Euler totient function.
\end{lemma}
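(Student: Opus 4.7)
The plan is to reduce the exponential sum $S_f(x,a/q)$ to sums of $f$ twisted by Dirichlet characters modulo $q$. First, the contribution from $n$ with $(n,q) > 1$ is easily controlled: it is bounded by $\sum_{p \mid q} \sum_{n \leq x,\, p \mid n} 1 \ll x \sum_{p \mid q} 1/p$, which for $q \leq x$ is comfortably absorbed into the claimed error. For $n$ with $(n,q) = 1$, since $(a,q) = 1$, the Gauss-sum identity
\[
    e(na/q) = \frac{1}{\phi(q)} \sum_{\chi \bmod q} \bar\chi(a)\,\tau(\chi)\,\bar\chi(n)
\]
transforms the main sum into
\[
    \sum_{\substack{n \leq x \\ (n,q)=1}} f(n)\,e(na/q) = \frac{1}{\phi(q)} \sum_{\chi \bmod q} \bar\chi(a)\,\tau(\chi)\,T(\chi,x),
\]
where $T(\chi,x) := \sum_{n \leq x,\,(n,q)=1} f(n)\bar\chi(n)$, $|\tau(\chi)| \leq \sqrt q$ for every $\chi$, and $\tau(\chi_0) = \mu(q)$.

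Next, I would separate the principal character contribution from the rest. For $\chi = \chi_0$, the sum $T(\chi_0,x)$ is the partial sum of a multiplicative function of modulus at most $1$, and a Hal\'asz-type mean-value theorem yields $|T(\chi_0,x)| \ll x/\log(2x)$ uniformly in $f$; combined with $|\tau(\chi_0)/\phi(q)| \leq 1$, this produces the first term of the claim. For the non-principal contribution, I would apply Cauchy--Schwarz in $\chi$ using the bound $\sum_{\chi} |\tau(\chi)|^2 \leq q\,\phi(q)$ together with the orthogonality identity
\[
    \sum_{\chi \bmod q} |T(\chi,x)|^2 = \phi(q) \sum_{\substack{m,n \leq x \\ m \equiv n \,(\bmod q) \\ (mn,q)=1}} f(m)\,\overline{f(n)}.
\]
The diagonal contribution $m = n$ is at most $\phi(q)\,x$, and after tracking the factor $\sqrt q$ from the Gauss sums this produces the second term $x/\phi(q)^{1/2}$.

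The main obstacle, and the heart of the argument, is the off-diagonal sum in which $m \neq n$ but $m \equiv n \pmod q$: a priori this could be as large as $x^{2}/q$, which would destroy the bound. To recover the final term $x\log^{3/2}(2x/q)/(x/q)^{1/2}$, I would exploit the multiplicativity of $f$ via a Vaughan-type bilinear decomposition with splitting parameter of order $\sqrt{x/q}$. The resulting Type I sums collapse to geometric progressions in $e(\cdot a/q)$ and are controlled by the standard $1/\|\cdot\|$ estimate, while the Type II bilinear sums are bounded using the large sieve inequality for characters modulo $q$, which is precisely the mechanism producing the logarithmic factor $\log^{3/2}(2x/q)$. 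The delicate technical point is arranging the decomposition so that Type I and Type II contributions balance simultaneously and the estimate remains uniform over all multiplicative $f$ with $|f| \leq 1$; unlike the cases of $\Lambda$ or $\mu$, no canonical Vaughan identity is available for a generic multiplicative $f$, and supplying a suitable substitute is the crux of the Montgomery--Vaughan argument.
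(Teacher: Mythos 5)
First, note that the paper does not actually prove this statement: Lemma \ref{lem:MVThm} is imported verbatim from Montgomery--Vaughan (Thm.\ 1 of \cite{montgomery1977exponential}) and is only used as a black box to deduce Lemma \ref{lem:MVCor0}. So your sketch must be measured against the original Montgomery--Vaughan argument, which takes a genuinely different route from yours: they never expand $e(na/q)$ into Dirichlet characters. Instead they weight by $\log n$, use $\log = 1*\Lambda$ together with multiplicativity to replace $f(n)\log n$ (up to an $O(x)$ total error) by the bilinear form $\sum_{p} f(p)\log p \sum_{m\le x/p} f(m)e(pma/q)$, and then estimate this by Cauchy--Schwarz over primes in dyadic blocks, Brun--Titchmarsh (to control how often $pa \bmod q$ hits a given residue), and the large sieve for the spaced points $pa/q$; this is exactly where the terms $x/\phi(q)^{1/2}$ and $\sqrt{qx}\,\log^{3/2}(2x/q)$ come from, and the $x/\log x$ term appears when one divides back by $\log x$.

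As written, your proposal has concrete gaps. (i) The dismissal of the terms with $(n,q)>1$ fails: the bound $x\sum_{p\mid q}1/p$ is not absorbable --- for $q$ even it is already $x/2$, and if $q$ is a primorial of size about $\sqrt{x}$ then almost every $n\le x$ shares a factor with $q$, your bound is $\gg x\log\log x$, while the right-hand side of the lemma is only $\asymp x/\log x$. These terms carry the bulk of the sum and require the same nontrivial treatment (e.g.\ factoring out the $q$-smooth part of $n$ and recursing to smaller moduli), so the reduction to $(n,q)=1$ is itself a substantive step. (ii) The Hal\'asz claim is false: for $f\equiv 1$ one has $T(\chi_0,x)\asymp x\phi(q)/q$, so no mean-value theorem gives $T(\chi_0,x)\ll x/\log 2x$ uniformly over $|f|\le 1$. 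The principal character is in fact harmless for the trivial reason that it carries the weight $\tau(\chi_0)/\phi(q)=\mu(q)/\phi(q)$, giving a contribution $\le x/\phi(q)$; but then the $x/\log 2x$ term of the lemma is not produced by your decomposition at all (in Montgomery--Vaughan it comes from the $\log n$-weighting step). (iii) Most importantly, the actual crux --- the off-diagonal pairs $m\equiv n\ \mmod{q}$, $m\ne n$ --- is not handled: you explicitly defer it to an unspecified ``substitute'' for a Vaughan identity, which is precisely the content of the theorem, so the proposal assumes the key estimate rather than proving it. The order of operations is also off: after Cauchy--Schwarz and orthogonality you are facing $\sum_{m\equiv n}f(m)\overline{f(n)}$, and a Type I/Type II decomposition of that congruence sum is not a meaningful next move; any bilinear decomposition must be made before the Cauchy--Schwarz step, as in Montgomery--Vaughan. (A minor point: your diagonal term actually yields $\sqrt{qx}$, not $x/\phi(q)^{1/2}$, though this is absorbed by the third term.)
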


\begin{proof}[Proof of Lemma \ref{lem:MVCor0}]
	If $q=2$ or $R=2$ then \eqref{eq:MVCorBB0} is trivial, so assume that $q\geq3$, that $R\geq3$, and that $x\geq9$. For $r \in \nn$ with $r \leq x$ let
		\[
			\cR(x,r) := \fr{x}{\log 2x} + \fr{x}{\phi(r)^{1/2}} + \fr{x \log^{3/2}(2x/r)}{(x/r)^{1/2}}.
		\]
	Writing $e(\xa n) = e(\xb n) e((\xa-\xb)n)$ and summing by parts, one sees that
		\[
			S_f(x,\xa) = S_f(x,\xb) e((\xa-\xb)x) - 2\pi i (\xa-\xb) \int_1^x S_f(u,\xb) e((\xa-\xb)u) \,du.
		\]
	Suppose that $\xb = b/r$ with $(b,r)=1$ and $r \leq x$. Trivially bounding $S_f(u,\xb) \less u$ when $u \leq r$ and using Lemma \ref{lem:MVThm} when $r < u \leq x$, one finds that
		\[
			2\pi i (\xa-\xb) \int_1^x S_f(u,\xb) e((\xa-\xb)u) \,du \less |\xa - b/r|(r^2 + x \cR(x,r)),
		\]
	whereby
		\[
			S_f(x,\xa) \less \cR(x,r) + |\xa-b/r|(r^2 + x\cR(x,r)).
		\]
	
	If $q > x^{1/2}$, then setting $b = a$ and $r = q$ it follows that 
		\<
			\label{eq:MVCorBB2.5}
			S_f(x,\xa) \less \cR(x,r) + r^{-2}(r^2 + x\cR(x,r)) \less \cR(x,r).
		\>
	Now suppose that $q \leq x^{1/2}$. By Dirichlet's approximation theorem there exist coprime $b$ and $r$, with $r \leq 2x/q$, such that $|\xa-b/r| \leq q/(2xr)$. If $r = q$ then $|\xa - b/r|\leq 1/(2x)$ and we again recover \eqref{eq:MVCorBB2.5}. If $r \neq q$ then
		\[
			1\leq|ar - bq| = qr\lf|\fr{b}{r}-\fr{a}{q}\rh|\leq\fr{q^2}{2x} + \fr{r}{q} \leq \fr12 + \fr{r}{q},	
		\]
	whereby $r \geq q/2$ and $|\xa-b/r| \leq 1/x $, and it follows that $S_f(x,\xa) \less \cR(x,r) + x/q^{2}$.
	
	In each of the three scenarios for $x$, $q$, and $r$ described, we have $r\geq\tf12 q$ and
		\[
			S_f(x,\xa) \less \cR(x,r) + x / q^2.	
		\]
	Since $r \geq \fr{q}{2} \geq \fr{R}{2} \geq \tf32$ (so that $r\geq2$), we use the bound $\phi(n) \great \fr{n}{|\log\log{n}|}$ for $n\geq2$ (see, e.g., \cite{hardy2008introduction}*{Thm.\ 328}) to deduce that
		\[
			\phi(r) \great \fr{r}{|\log\log r|} \great \fr{R}{\log\log R},
		\] 
	and because $2 \leq R \leq q \leq2x/r$ it follows that
		\[
			\cR(x,r) \less \fr{x}{\log x} + \fr{x\log^{1/2}(\log R)}{R^{1/2}} + \fr{x \log^{3/2}(2x/r)}{(2x/r)^{1/2}} \less \fr{x}{\log x} + \fr{x \log^{3/2}R}{R^{1/2}},
		\]
	establishing Lemma \ref{lem:MVCor0}.
\end{proof}

As mentioned previously, ultimately we set $Q=X^{1/3}$, but for now we let $Q=X^\xd$ for some $\xd \in (0,\tfrac{1}{2})$ for illustrative purposes. We note that the upper bound of $\tfrac{1}{2}$ for $\xd$ ensures that distinct major arcs $\fM(a/q)$ do not overlap once $X$ is sufficiently large.

\begin{lemma}
	Let $\xd\in(0,\tfrac{1}{2})$, let $\nu,\kappa \in (0,1)$ satisfy $\nu + \kappa < 1$, let $Q=X^\xd$, and let $X$ be sufficiently large. For all $x\geq XQ^{-\nu}$ one has
		\<
			\label{eq:MinorSupBB}
			\sup_{\substack{a \in \fm(X,Q)\\k \leq Q^{\xk}}} |S_f(x,k\xa)| \less \fr{x}{\log{x}} + \fr{x\log^{3/2}R}{R^{1/2}},
		\>
	uniformly among multiplicative $f$ with $|f|\leq1$, where $R = Q^{1-\xk-\xn}$.
\end{lemma}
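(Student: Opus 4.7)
The plan is to chain together the two preparatory tools already developed in this section: the set-inclusion chain \eqref{eq:MinorSetup} (together with Lemma \ref{lem:MinorIncl}) to replace a supremum over multiples $k\alpha$ with a supremum over a single minor-arc variable, and then Lemma \ref{lem:MVCor0} (Montgomery--Vaughan) to bound that single-variable sum.

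First I would apply Lemma \ref{lem:MinorIncl}. For any $\alpha \in \fm(X,Q)$ and $1 \le k \le Q^{\kappa}$, the first part of that lemma yields $\{k\alpha\} \in \fm(X, Q^{1-\kappa})$. Since $\nu < 1-\kappa$ by hypothesis, the second part then gives
\[
\fm(X, Q^{1-\kappa}) \subset \fm(XQ^{-\nu}, Q^{1-\kappa-\nu}) = \fm(\cX, \cQ),
\]
where $\cX := XQ^{-\nu}$ and $\cQ := R = Q^{1-\kappa-\nu}$. This identifies $K = Q^{\kappa}$, $\cX$, and $\cQ$ as the parameters for which the hypothesis preceding \eqref{eq:MinorSetup} is satisfied. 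Hence, for every $x \ge \cX = XQ^{-\nu}$, inequality \eqref{eq:MinorSetup} delivers
\[
\sup_{\substack{\alpha \in \fm(X,Q)\\ k \le Q^{\kappa}}} |S_f(x, k\alpha)| \;\le\; \sup_{\theta \in \fm(x, R)} |S_f(x,\theta)|.
\]

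Next I would bound the right-hand side by Lemma \ref{lem:MVCor0}. Since $Q = X^{\delta}$ and $1-\kappa-\nu>0$, for all sufficiently large $X$ we have $R = X^{\delta(1-\kappa-\nu)} \ge 2$. Fix any $\theta \in \fm(x,R)$; by Dirichlet's approximation theorem there exist coprime integers $a,q$ with $1 \le q \le x/R$ and $|\theta - a/q| \le R/(qx) \le 1/q^{2}$. By the very definition of $\fm(x,R)$ this forces $q > R$, so that $R \le q \le x/R$, which is precisely the hypothesis of Lemma \ref{lem:MVCor0}. Applying that lemma gives
\[
|S_f(x,\theta)| \;\ll\; \frac{x}{\log x} + \frac{x\log^{3/2} R}{R^{1/2}},
\]
with an implicit constant independent of $\theta$, $a$, and $q$. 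Taking the supremum over $\theta \in \fm(x,R)$ and combining with the previous display yields \eqref{eq:MinorSupBB}.

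The only genuine subtlety is the bookkeeping in the first step, namely checking that the parameter choice in Lemma \ref{lem:MinorIncl} legitimately transfers minor-arc membership of $\alpha$ to minor-arc membership of $k\alpha$ with the \emph{stronger} denominator $R$, while simultaneously relaxing the enclosing interval to one of length $R/(qx)$ so that Dirichlet's theorem lands inside it. Everything else is mechanical: the restriction $x \ge XQ^{-\nu}$ is exactly what is needed to invoke \eqref{eq:MinorSetup}, the bound $\nu+\kappa<1$ is exactly what is needed to apply the second half of Lemma \ref{lem:MinorIncl}, and the requirement $R \ge 2$ in Lemma \ref{lem:MVCor0} holds for $X$ large enough. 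The restriction $\delta < 1/2$ on $Q = X^\delta$ plays no role in this particular lemma; it is recorded here for later use to ensure major arcs remain disjoint.
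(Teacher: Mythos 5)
Your proof is correct and follows essentially the same route as the paper's: apply Lemma \ref{lem:MinorIncl} to place $\{k\alpha\}$ in $\fm(XQ^{-\nu},R)$, use Dirichlet's approximation theorem to produce a rational $a/q$ with $R\le q\le x/R$ and $|\theta-a/q|\le 1/q^2$, and invoke Lemma \ref{lem:MVCor0}. The only cosmetic difference is bookkeeping order — you route explicitly through the inclusion chain \eqref{eq:MinorSetup} and apply Dirichlet at scale $x$, whereas the paper applies Dirichlet at scale $XQ^{-\nu}$ and then notes the inequalities persist for all $x\ge XQ^{-\nu}$ — but the content is identical.
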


\begin{proof}
	It follows from Lemma \ref{lem:MinorIncl} that for $\xa \in \fm(X,Q)$ and $k \leq Q^\xk$, one has
		\<
			\label{eq:kxaMinor}
			\{k\xa\} \in \fm(XQ^{-\nu}, Q^{1-\xk-\nu}).    
		\>
	By Dirichlet's approximation theorem there exist $q_k$ and $a_k$ such that $(a_k,q_k)=1$, {\binoppenalty=10000\relpenalty=10000 $1 \leq q_k \leq XQ^{-\nu}/Q^{1-\xk-\nu}$}, and 
		\[
			\lf|\{k\xa\} - \fr{a_k}{q_k}\rh|\leq\fr{Q^{1-\xk-\xn}}{q_kXQ^{-\xn}}\leq\fr{1}{q_k^2}.	
		\]
	By \eqref{eq:kxaMinor} it holds that $q_k > Q^{1-\xk-\xn}$, and by the assumptions that $Q=X^\xd$ and that $X$ is sufficiently large, it additionally holds that
		\<
			\label{eq:MinorqkIneq}
			2 \leq Q^{1-\xk-\xn} \leq q_k\leq\fr{XQ^{-\xn}}{Q^{1-\xk-\xn}}. 
		\>
	As \eqref{eq:MinorqkIneq} remains valid with $XQ^{-\nu}$ replaced by any $x\geq XQ^{-\nu}$, setting $R:=Q^{1-\xk-\xn}$ and applying Lemma \ref{lem:MVCor0} one deduces that
		\[
			S_f(x,\{k\xa\}) \less \fr{x}{\log{x}} + \fr{x\log^{3/2}R}{R^{1/2}},
		\]
	and the result follows immediately since $S_f(x,\{k\xa\}) = S_f(x,k\xa)$.
\end{proof}

\begin{remark}
    Because the supremum in inequality \eqref{eq:MinorSupBB} is taken over all $k \leq Q^\xk$, it is important that the implicit constant in inequality \eqref{eq:MVCorBB0} does not depend on $q_k$ and thereby depend on $k$. This is why we emphasize said independence in Lemma \ref{lem:MVCor0}.
\end{remark}

Having established our desired bounds on $S_f(x,k\xa)$, we are ready to bound $\xY(\xr e(\xa))$ for $\xa \in \fm(X,Q)$ and, after taking $Q=X^{1/3}$, establish Theorem \ref{thm:Minor}. For convenience, we note that Theorem \ref{thm:Minor} is equivalent to the statement that: \emph{With $Q=X^{1/3}$, for all $X > X_0$ and $\xa \in \fm(X,Q)$ one has
    \[
			\xY(\xr e(\xa),f) \less \nfr{X}{\log X},
		\]
    uniformly among multiplicative $f$ with $|f|\leq1$.
}

\begin{proof}[Proof of Theorem \ref{thm:Minor}]
	Let $\xd \in (0,\tf12)$, let $\xk,\xn \in (0,1)$ be such that $\xk < \xn$ and $\xk+\xn<1$, let $Q = X^{\xd}$, let $X$ be sufficiently large, and let $k \leq Q^{\xk}$. Recall that $F_k(z) = \sum_{n=1}^\infty f^k(n) z^{nk}$. Summing by parts, we have
		\<
			\label{eq:MinorFInt}
			F_k(\xr e(\xa)) = \sum_{n=1}^\infty f^k(n)\xr^{nk} e(nk\xa) = \fr{k}{X} \int_0^\infty e^{-xk/X}S_f(x,k\xa) \,dx. 
		\>
	Let $R = Q^{1-\xk-\xn} = X^{\delta(1-\kappa-\nu)}$. Bounding $S_f(x,k\xa) \less x$ when $x \leq XQ^{-\nu}$ and using inequality \eqref{eq:MinorSupBB} when $x > XQ^{-\nu}$, we find that integral \eqref{eq:MinorFInt} is
		\begin{align}
			&\less \fr{k}{X} \lf[ \int_0^{XQ^{-\xn}} e^{-xk/X}x \,dx + \int_{XQ^{-\xn}}^\infty e^{-xk/X} \lf( \fr{x}{\logx} + \fr{x\log^{3/2}R}{R^{1/2}} \rh) \,dx \rh] \notag\\
			\label{eq:MinorFIntBB}
			&\less \fr{X}{k} \lf[ \int_0^{kQ^{-\xn}} e^{-u}u \,du + \lf( \fr{1}{\log XQ^{-\xn}} + \fr{\log^{3/2}R}{R^{1/2}} \rh) \int_{kQ^{-\xn}}^\infty e^{-u}u \,du \rh].
		\end{align}
	Observing that
		\[
			\fr{1}{\log XQ^{-\xn}} + \fr{\log^{3/2}R}{R^{1/2}} = \fr{1}{\log (X^{1-\xd \xn})} + \fr{\log^{3/2}(X^{\xd(1-\xk-\xn)})}{(X^{\xd(1-\xk-\xn)})^{1/2}} \less \fr{1}{\log X},	
		\]
	we integrate by parts in expression \eqref{eq:MinorFIntBB} 
	to deduce that
		\<
			\label{eq:MinorFIntBB2}
			F_k(\xr e(\xa)) \less \fr{X}{k} \lf[ 1-(1+kQ^{-\xn})e^{-kQ^{-\xn}} + \fr{1}{\log X} (1+kQ^{-\xn})e^{-kQ^{-\xn}} \rh].
		\>
	As $kQ^{-\xn} \leq Q^{\xk-\xn}$ and $\xk-\xn<0$, and since $Q$ is sufficiently large (because $X$ is), we use the approximation 
		\[
			(1+x)e^{-x} = 1 - \tf12 x^2 + O(x^3)
		\] 
	for small $x$ to bound the righthand side of \eqref{eq:MinorFIntBB2} as
		\begin{align*}
			&\less \fr{X}{k} \lf[ \tf12Q^{2\xk-2\xn} + O(Q^{3\xk-3\xn}) + \fr{1}{\logX} (1+O(Q^{2\xk-2\xn})) \rh] \less \fr{X/k}{\logX}. 
		\end{align*}
    From this, we conclude that
        \[
            F_k(\xr e(\xa)) \less \fr{X/k}{\logX} \qquad \text{for $\xa \in \fm(X,Q)$ and $k \leq X^{\xd \xk}$.}
        \]
    
    Summing over all $k$ and using Lemma \ref{F:lem:TailBB} then, we have
        \[
            \xY(\xr e(\xa)) = \lf[\sum_{k\leq X^{\xd \xk}} + \sum_{k>X^{\xd \xk}}\rh] \fr{F_k(\xr e(\xa))}{k} \less \fr{X}{\logX} \sum_{k \leq X^{\xd \xk}} \fr{1}{k^2} + X^{1-\xd \xk},
        \]
    and this latter expression is 
        \[
            \less \fr{X}{\log X} + \fr{X^{1-\xd \xk}}{\log{X}} + X^{1-\xd \xk} \less \fr{X}{\logX} + X^{1-\xd \xk}.
        \]
	Taking $\xd =\tf13$, $\xk=\tf13$, and $\xn = \tf25$ we see that
        \[
            \xk<\xn, \quad \xk+\xn<1, \quad\text{and}\quad X^{1-\xd \xk}=X^{\fr89},
        \] 
    and the result follows.
\end{proof}

\section{The major arcs for odd prime \tops{$p$}{p}}
\label{sec:major}

We now return to our specific consideration of $\xY(z)=\xY(z,\xc_p)$ for odd prime $p$. Specifically, we first derive first-order asymptotics for $\xY(\xr e(\xa))$, as $\xr\to1^{-}$, when $\xa$ is near a reduced rational $a/q \in [0,1]$. Using Mellin-transform methods one may compute lower order terms for $\xY(\xr e(\nfr{a}{q}))$, as seen in section \ref{sec:princ} and later in section \ref{sec:5}, but these computations become cumbersome as $q$ grows. At present we are simply interested in determining which major arcs $\fM(a/q)$ are the dominant contributors in our asymptotic formulae, and thus coarser methods suffice for now. 

As in the previous section, we begin with an undetermined $Q$ such that $1 \leq Q \leq X$. Now let $\xa = a/q+\xb$, with $q\leq Q$, $(a,q)=1$, and $|\xb|\leq Q/(qX)$, or in other words, let $\xa\in\fM(a/q)$ with $q\leq Q$, and recall from \eqref{eq:PsiF} that
	\[
		\xY(z) = \sum_{k=1}^\infty \fr 1 k \sum_{n=1}^\infty \xc^k(n) z^{nk} = \sum_{k=1}^\infty \fr{1}{k} F_k(z^k).
	\]
It is convenient to let
    \[
        \xt = X^{-1}(1-2\pi iX\xb),
    \]
so that for $\xa=\tf{a}{q}+\xb$ we may write $\xr e(\xa) = e(\tf{a}{q}) e^{-\xt}$. Doing this and letting $[p,q]:=\mathrm{lcm}[p,q]$, we have
	\[
		F_k(\xr^k e(k\xa)) = \sum_{n=1}^\infty \e{\fr{nka}{q}} \xc^{k}(n) e^{-nk \xt} 
		= \sum_{m = 1}^{[p,q]} e\lf(\fr{m k a}{q}\rh) \xc^{k}(m) \sum_{n \equiv  m \mod{[p,q]}} e^{-nk \xt}.
	\]
Since $[p,q] \ll q$ trivially, and
	\[
		m k |\xt| \less [p,q] k|\xt| \less q k \ABS{\fr{1}{X}-2\pi i\xb} \less qk \lf(\fr{1}{X} + \fr{Q}{qX}\rh) \less \fr{kQ}{X}
	\]
for $1 \leq m \leq [p,q]$, we have
	\[
		\sum_{n \equiv  m \mod{[p,q]}} e^{-nk\xt} = \fr{e^{-mk\xt}}{1 - e^{-[p,q]k\xt}} = \fr{1}{[p,q]k\xt}\pfr{1+O(mk\xt)}{1+O(qk\xt)} = \fr{1}{[p,q]k\xt} + \O{1}.
	\]
Again since $[p,q] \less q \less Q$, we deduce that
	\[
		F_k(\xr^k e(k\xa)) = \fr{1}{[p,q] k\xt} \sum_{m = 1}^{[p,q]} \e{\fr{m ka}{q}} \xc^{k}(m) + \O{Q}.
	\]

We are thus motivated to define, for $0 \leq a \leq q$, $(a,q)=1$, and $k \geq 1$, the quantities
    \<
        \label{eq:UkDefin}
        U_k(q,a) = \fr{1}{[p,q]} \sum_{m = 1}^{[p,q]} \e{\fr{m k a}{q}} \xc^k(m),
    \>
where again $[p,q]=\mathrm{lcm}(p,q)$, so that
	\[
		F_k(\xr^k e(k\xa)) = \fr{U_k(q,a)}{k\xt} + \O{Q}
	\]
and
	\<
		\label{eq:MajorKSum}
		\sum_{k\leq K} \fr{1}{k} F_k(\xr^k e(k\xa)) = \xt^{-1} \sum_{k\leq K} \fr{U_k(q,a)}{k^2} + \O{Q\log{K}}.
	\>
Because $|U_k(q,a)| \leq 1$, we complete the sum in \eqref{eq:MajorKSum} at the cost of $O(|\xt|^{-1}K^{-1}) = O(X/K)$ to deduce that
	\[
		\sum_{k\leq K} \fr{1}{k}F_k(\xr^k e(k\xa)) 
		= \xt^{-1} \sum_{k=1}^\infty \fr{U_k(q,a)}{k^2} + \O{\fr{X}{K}} + \O{Q\log{K}}.
	\]
Recalling from Lemma \ref{F:lem:TailBB} that $\sum_{k>K} k^{-1} F_k(z) \less X/K$ as well, we further deduce that
	\[
		\xY(\xr e(\xa)) = \xt^{-1} \sum_{k=1}^\infty \fr{U_k(q,a)}{k^2} + \O{\fr{X}{K}} + \O{Q\log{K}}.
	\]
Setting $Q = X^{1/3}$ and $K = X^{1/9}$ (consistent with the choices $\xd = \tf13$ and $\xk = \tf13$ from section \ref{sec:Minor}), we conclude that
    \<
        \label{eq:majorCrude}
        \xY(\xr e(\xa)) = \pfrac{X}{1-2\pi i X\xb} \sum_{k=1}^\infty \fr{U_k(q,a)}{k^2} + O(X^{\fr89}).
    \>

With formula \eqref{eq:majorCrude}, ``sorting'' the major arcs $\fM(a/q)$ based on their relative contributions to $\fp(n,\xc_p)$ is now a matter of computing $\sum_{k=1}^\infty U_k(q,a)/k^2$ for different $q$ and $a$. As we ultimately find that
    \[
        \sum_{k=1}^\infty \fr{U_k(1,1)}{k^2} = \sum_{k=1}^\infty \fr{U_k(2,1)}{k^2} = \fr{\pi^2}{24}\lf(1-\fr1p\rh),
    \]
which dictates the behavior of $\xY(\xr e(\xa))$ on our principal arcs, it is convenient in the remainder of this paper to let 
    \<
        \label{eq:cDef}
        \fc = \fr{\pi^2}{24}\lf(1-\fr1p\rh)
    \>
and normalize our constants using $\fc$. In particular, we define 
    \<
        \label{eq:VqDefin}
        V_q(a) = \frac{1}{\fc} \sum_{k=1}^\infty \fr{U_k(q,a)}{k^2} = \frac{1}{\fc[p,q]}\sum_{k=1}^\infty \sum_{m=1}^{[p,q]} \e{\fr{mka}{q}} \fr{\xc^k(m)}{k^2}
    \>
and note that $|V_q(a)| \leq 1$ for all $(a,q)=1$.

The explicit computations of different $V_q(a)$ are little more than exercises in the elementary properties of Dirichlet characters. Because $\xc^k$ is either $\xc_0 \mmod{p}$ or $\xc_p$ depending on the parity of $k$, and because the parity of $q$ also affects the formula for $V_q(a)$, the derivations of the different $V_q(a)$ are somewhat tedious. Thus, we skip these derivations presently and include them in the appendix. Granting this, we establish the following proposition.

\begin{proposition}
	\label{prop:Major}
	Let $\chi=\chi_p$, let $X>X_0$, and let $\xa = a/q + \xb$ with $(a,q) = 1$ and $|\xb| \leq 1/(qX^{2/3})$. There exists a constant $V_q(a)$ such that as $X\to\infty$ one has
		\<
			\label{eq:Major}
			\xY(\xr e(\xa)) = \frac{V_q(a)\fc X}{1-2\pi iX\xb} + O(X^{\fr89}), 
		\>
    where
        \[
            \fc = \fr{\pi^2}{24}\lf(1-\fr{1}p\rh)
        \] 
	and $V_q(a)$, as defined in \eqref{eq:VqDefin}, has the following closed forms. First, if $p \nmid q$ then 
			\<
				\label{eq:Vqcoprime}
				V_q(a) = \begin{cases}
					1/q^2 & 2\nmid q, \\
					4/q^2 & 2\mid q.
				\end{cases}
			\>
		If $p\mid q$ and $2\nmid q$, then 
			\<
                \label{eq:Vqprime}
				V_q(a) =
					\lf[ \xc(a)G(\xc)(1-\tfr{\xc(2)}{4})L_2(\xc)/\fc - 1 \rh]p/q^2,
			\>
        where $G(\xc)=\sum_{m=1}^p \xc(m)e(m/p)$ and $L_2(\chi) = L(2,\chi)$. Finally, if $p\mid q$ and $2\mid q$ then
            \<
                V_q(a) = -4p/q^2.
            \>
\end{proposition}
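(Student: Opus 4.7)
The asymptotic formula \eqref{eq:Major} is already established by the derivation preceding the proposition, culminating in \eqref{eq:majorCrude}, upon identifying $V_q(a)$ via \eqref{eq:VqDefin}. Thus the substantive content of the proposition is the explicit evaluation of $V_q(a) = \mathfrak{c}^{-1} \sum_{k\ge 1} U_k(q,a)/k^2$ in each of the four cases.

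My plan is to split the defining series according to the parity of $k$: since $\chi = \chi_p$ is a real quadratic character, $\chi^k$ equals $\chi$ for odd $k$ and coincides with the principal character $\chi_0$ modulo $p$ for even $k$. The inner sums defining $U_k(q,a)$ then reduce to Ramanujan-type sums by an appropriate change of variable, with a Gauss sum appearing precisely when $p \mid q$ and $k$ is odd.

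When $p \nmid q$, so that $[p,q] = pq$, I would decompose $m$ modulo $pq$ via the Chinese Remainder Theorem into independent residues $r$ modulo $p$ and $s$ modulo $q$, so that $e(mka/q) = e(ska/q)$ factors off from $\chi^k(m) = \chi^k(r)$. The $\chi$-sum over $r$ forces odd-$k$ contributions to vanish, while for even $k$ the sum over $s$ collapses to $q\cdot\mathbf{1}[q\mid k]$, producing $U_k(q,a) = \tfrac{p-1}{p}\mathbf{1}[q\mid k]$. Summing $k^{-2}$ over the allowed $k$ (multiples of $2q$ when $q$ is odd, multiples of $q$ when $q$ is even) and dividing by $\mathfrak{c}$ yields the claimed $V_q(a) = 1/q^2$ or $4/q^2$. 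When $p\mid q$, so that $[p,q] = q$, I would instead write $m = v + pj$ with $1\le v\le p$ and $0\le j < q/p$; the sum over $j$ collapses to $(q/p)\mathbf{1}[(q/p)\mid k]$, and for odd $k$ the sum over $v$ becomes the standard primitive Gauss-sum evaluation
\[
    \sum_{v=1}^{p} \chi(v) e(vka/p) = \overline{\chi(ka)}\,G(\chi) = \chi(k)\chi(a) G(\chi).
\]
The odd-$k$ contribution then survives only when $q/p$ is odd (i.e.\ when $2\nmid q$), and restricting the resulting series $\sum_{k'\text{ odd}} \chi(k')/k'^2 = (1 - \chi(2)/4)L_2(\chi)$ produces the $L_2(\chi)$ factor appearing in \eqref{eq:Vqprime}. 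The even-$k$ contributions in both subcases are handled by two Ramanujan-type sums whose combination yields the remaining $-p/q^2$ and $-4p/q^2$ pieces.

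The final closed forms drop out upon observing that $\mathfrak{c} = \pi^2(p-1)/(24p)$ cleanly absorbs the $(p-1)/p$ factor arising from the principal-character sums. The main obstacle here is bookkeeping rather than mathematical depth: one must carefully track the interaction between the parity of $k$, the divisibility conditions $q\mid k$ and $(q/p)\mid k$, and the parities of $q$ and $q/p$. This is presumably why the author defers the detailed computations to the appendix.
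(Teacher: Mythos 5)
Your proposal is correct and follows essentially the same route as the paper: formula \eqref{eq:Major} is just \eqref{eq:majorCrude} combined with the definition \eqref{eq:VqDefin}, and the paper's appendix evaluates $V_q(a)$ by the same residue decomposition of $m$, the same parity split in $k$, the same Ramanujan/Gauss-sum evaluations, and the same summation of the resulting series in $k$. One notational slip: once the $j$-sum forces $(q/p)\mid k$, the surviving exponential is $e(vk_1a/p)$ with $k_1=k/(q/p)$, so the Gauss-sum evaluation yields $\chi(k_1)\chi(a)G(\chi)$ rather than $\chi(k)\chi(a)G(\chi)$ (as literally written, a spurious factor $\chi(q/p)$ would enter); your subsequent series $\sum_{k'\,\mathrm{odd}}\chi(k')/k'^{2}$ indicates you intended the reduced variable, so this does not affect the argument.
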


Because they feature prominently in our later discussions, we note that
		\[
			V_1(1) = V_2(1) = 1 \qquad\text{and}\qquad V_4(1) = V_4(3) = \tf14.
		\]

\section{The principal arcs}
\label{sec:princ}

Because $V_1(1)=V_2(1)$ as mentioned above, comparison of the contributions of the arcs near $0$, $1$, and $\tf12$ requires lower-order asymptotic terms for $\Psi(\rho e(\alpha),\chi_p)$. Thus, we now consider $\xY(\xr e(\xa))$ when $\xa$ is in one of the two principal arcs 
    \begin{align*}
        \fP &= \{\xa \in [0,1) : \|\xa\| \leq 3/(8\pi X) \}, \\
        \fP_* &= \{\xa \in [0,1) : |\xa-\tf12| \leq 3/(8\pi X_*) \}.
    \end{align*}
Complex $s$ are written $\xs+it$, and $\xs$ and $t$ always represent the real and imaginary parts of some $s\in\cc$, respectively. Integrals $\frac{1}{2\pi i} \int$ are abbreviated using dashed integrals $\dint$. For real $\xb$ we again set $\xt := X^{-1}(1-2\pi iX\xb)$ so that $z=\xr e(\xb)$ may be written as $e^{-\xt}$.

Because $\Psi(\xr e(\xa)) = \Psi(\xr e(\xa-1))$ for all $\xr$ and $\xa$, it is convenient to identify the sets
    \<
        \label{eq:prcId}
        \{ e(\xa) : \xa \in \fP \} \qquad\text{and}\qquad \{ e(\xb) : \text{$\xb\in\rr$ and $|\xb| \leq  3/(8\pi X)$} \}
    \>
and consider $\xY(\xr e(\xa))$ with $\xa \in \fP$ as $\xY(\xr e(\xb))$ with $|\xb| \leq 3/(8\pi X)$. As $e(1/2)=-1$, we may similarly identify the sets
    \<
        \label{eq:prc*Id}
        \{ e(\xa) : \xa \in \fP_* \} \qquad\text{and}\qquad \{ -e(\xb) : \text{$\xb\in\rr$ and $|\xb| \leq 3/(8\pi X_*)$} \}   
    \>
and consider $\xY(\xr_* e(\xa))$ with $\xa\in\fP_*$ as $\xY(-\xr_* e(\xb))$ with $|\xb| \leq 3/(8\pi X_*)$.

\begin{remark}
    Despite the above definitions' in terms of $X_*$, we do not presently distinguish $X$ and $X_*$ because, at present, both $X$ and $X_*$ are simply large real parameters. Thus, in light of \eqref{eq:prcId} and \eqref{eq:prc*Id}, we see that our results on $\Psi(\pm \xr e(\xb))$ with $|\xb| \leq 3/(8\pi X)$ sufficiently cover the analyses of $\Psi(\xr e(\xa))$ for $\xa$ in the principal arcs $\fP$ and $\fP_*$.
\end{remark}

Observing that $\xc^k(n) = \xc(n)$ for odd $k$ and $\xc^k(n) = \xc_0(n)$ for even $k$, where $\xc_0$ is the principal character (mod $p$), we split $\xY(z)$ into two sums 
    \<
        \label{eq:Psi0Psi1Def}
        \xY(z) = \sum_{k \equiv  0\mod2} \sum_{n=1}^\infty \frac{\xc_0(n)}{k} z^{nk} + \sum_{k\equiv  1\mod2} \sum_{n=1}^\infty \frac{\xc(n)}{k} z^{nk},
    \>
which we denote by $\xY_0(z)$ and $\xY_1(z)$ in the obvious manner, so that
    \[
        \xY(z) = \xY_0(z) + \xY_1(z).
    \]
Using the well-known formula $e^{-w} = \ldint{\xs} \xG(s)w^{-s} \,ds$ for $\Re w > 0$ and $\xs > 0$, we express $\xY_0(z)$ and $\xY_1(z)$ as sums of integrals, interchange the summations and integrations using the series' absolute convergence, and thereby derive the formulae
	\begin{align}
        \label{eq:Psi0Mellin}
	    \xY_0(\xr e(\xb)) = \xY_0(e^{-\xt}) &= \ldint{2} 2^{-s-1}\xz(s+1) L(s,\xc_0) \xG(s)\xt^{-s} \,ds, \\
        \label{eq:Psi1Mellin}
	    \xY_1(\xr e(\xb)) = \xY_1(e^{-\xt}) &= \ldint{2} (1-2^{-s-1}) \xz(s+1) L(s,\xc) \xG(s)\xt^{-s} \,ds.
    \end{align}
In what follows we again use the conventions that
    \[
        L_r(\xc) = L(r,\xc) \qquad\text{and}\qquad L_r'(\xc) = L'(r,\xc) \qquad (r=0,1,2).
    \]

\begin{lemma}
	\label{lem:prcPsi}
	For all $|\xb| \leq 3/(8\pi X)$, as $X \to \infty$ one has
		\<
			\label{eq:prcPsi}
            \xY(\xr e(\xb)) = \fr{\fc X}{1-2\pi iX\xb} + 2\xl \log\pfrac{X}{1-2\pi iX\xb} + \xL + O(X^{-\fr12}),
		\>
	where
        \[
            \fc = \tf{\pi^2}{24}(1-\tf1p), \quad \xl = \tf14 L_0(\xc), \quad\text{and}\quad \xL = \tf12\big[L_0'(\xc) + L_0(\xc)\log{2}\big] - \tf14 \log{p}.
        \]
\end{lemma}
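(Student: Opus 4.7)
The plan is to shift the Mellin--Barnes contours in \eqref{eq:Psi0Mellin} and \eqref{eq:Psi1Mellin} leftward from $\sigma = 2$ to $\sigma = -1/2$, collect the residues at the poles encountered, and estimate the remaining contour integrals. Each residue will contribute one of the explicit terms in \eqref{eq:prcPsi}, while the shifted integrals will contribute the error $O(X^{-1/2})$.

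First I analyze the pole structure in the strip $-1/2 < \Re s \leq 2$. For $\Psi_0$, the factorization $L(s,\chi_0) = (1-p^{-s})\zeta(s)$ shows that the integrand has a simple pole at $s=1$ (from $L(s,\chi_0)$) and, crucially, only a \emph{simple} pole at $s=0$: the factor $\zeta(s+1)\Gamma(s)$ contributes a double pole at $s=0$, but $L(s,\chi_0)$ vanishes there (since $1-p^0 = 0$), reducing the order to one. For $\Psi_1$, because $L(s,\chi)$ is entire for nonprincipal $\chi$, the only pole in $\Re s > -1/2$ is the genuine double pole at $s=0$ coming from $\zeta(s+1)\Gamma(s)$.

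Next I compute the three residues. At $s=1$ in $\Psi_0$, using $\operatorname{Res}_{s=1}L(s,\chi_0) = 1-1/p$, $\zeta(2)=\pi^2/6$, $\Gamma(1)=1$, and $\tau^{-1} = X/(1-2\pi iX\beta)$, the residue simplifies to $cX/(1-2\pi iX\beta)$, supplying the leading exponential term. At $s=0$ in $\Psi_0$, inserting $L(s,\chi_0) = -\tfrac{1}{2}(\log p)\,s + O(s^2)$ together with the standard expansions $\zeta(s+1) = 1/s + \gamma + O(s)$ and $\Gamma(s) = 1/s - \gamma + O(s)$ into $2^{-s-1}\zeta(s+1)L(s,\chi_0)\Gamma(s)\tau^{-s}$ and extracting the coefficient of $1/s$ yields $-\tfrac{1}{4}\log p$. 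At $s=0$ in $\Psi_1$, expanding $(1-2^{-s-1})L(s,\chi)\tau^{-s} = \tfrac{L_0(\chi)}{2} + \tfrac{1}{2}\bigl[L_0'(\chi) + L_0(\chi)\log(2/\tau)\bigr]s + O(s^2)$ against the double-pole principal part $1/s^2$ of $\zeta(s+1)\Gamma(s)$ produces the residue
\[
\tfrac{1}{2}\bigl[L_0'(\chi) + L_0(\chi)\log 2\bigr] - \tfrac{1}{2}L_0(\chi)\log\tau.
\]
Since $\log(1/\tau) = \log\bigl(X/(1-2\pi iX\beta)\bigr)$ and $2\lambda = \tfrac{1}{2}L_0(\chi)$, summing the three residues matches the main terms displayed in \eqref{eq:prcPsi}.

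Finally I bound the shifted integrals on $\Re s = -1/2$. The constraint $|\beta| \leq 3/(8\pi X)$ gives $\Re\tau = 1/X > 0$ and keeps $\arg\tau$ bounded away from $\pm\pi/2$, so $|\tau|^{-s} \ll X^{-1/2}$ uniformly on that line; $\Gamma(s)$ decays exponentially in $|t|$; and $\zeta(s+1)$, $L(s,\chi_0)$, $L(s,\chi)$ grow only polynomially in $|t|$ by standard convexity bounds. Together these yield the desired remainder $O(X^{-1/2})$. The main obstacle is the first step: correctly recognizing that the apparent double pole in $\Psi_0$ at $s=0$ is only simple, so that no spurious $\log^2 X$ term appears. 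A secondary delicate point is tracking constants through the genuine double-pole Laurent expansion in $\Psi_1$ at $s=0$ in order to produce both the coefficient $2\lambda$ of $\log\bigl(X/(1-2\pi iX\beta)\bigr)$ and the precise value of $\Lambda$.
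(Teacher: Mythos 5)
Your proposal is correct and follows essentially the same route as the paper: the Mellin--Barnes representations \eqref{eq:Psi0Mellin}--\eqref{eq:Psi1Mellin}, a shift to $\Re s=-1/2$ picking up the simple pole of the $\Psi_0$ integrand at $s=1$ (residue $𝒄\tau^{-1}$), its simple pole at $s=0$ (residue $-\tfrac14\log p$, the double pole being killed by the zero of $1-p^{-s}$), and the genuine double pole of the $\Psi_1$ integrand at $s=0$ giving $-\tfrac12L_0(\chi)\log\tau+\tfrac12[L_0'(\chi)+L_0(\chi)\log 2]$, with the shifted integral bounded by $O(X^{-1/2})$ via $|\arg\tau|\le\pi/4$, Gamma decay, and polynomial growth of the $L$-factors. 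The only (immaterial) difference is that the paper first truncates the contour at height $T=X$ with an $O(X^2e^{-\pi T/5})$ error before shifting, whereas you shift the full line directly.
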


\begin{proof}
	As $L(s,\xc_0) = (1-p^{-s})\xz(s)$, equation \eqref{eq:Psi0Mellin} states that
		\<
			\label{eq:Psi0Int}
			\xY_0(\xr e(\xb)) = \ldint{2} 2^{-s-1}(1-p^{-s})\xz(s+1)\xz(s)\xG(s)\xt^{-s} \,ds,
		\>
    where again we write $\dint$ to indicate $\tfrac{1}{2\pi i}\int$.
	Letting $T$ be a large parameter to be chosen below, we first truncate the integral \eqref{eq:Psi0Int} at height $T$ with error term
        \<
            \label{eq:Psi0TruncError}
            \less \int_T^\infty \ABS{\xG(2+it)\xt^{-2-it}} \,dt.
        \>
    Having assumed that $|\xb| \leq 3/(8\pi X)$ we have
		\[
			\lf|\arg \xt \rh| = \lf| \arg(1-2\pi iX\xb) \rh| \leq \lf|\arctan(\tf34)\rh| \leq \tf{\pi}{4},
		\]
	which implies that
		\<
            \label{eq:tauAbsBB}
			\lf|\xt^{2-it}\rh| = |\xt|^{-2} e^{t \arg \xt} \leq X^2 e^{\pi|t|/4}. 
		\>
    We now recall two bounds on $\xG(s)$ and $\xz(s)$. First, one has \cite{paris2001asymptotics}*{ineq.\ (2.1.19)}
        \<
            \label{eq:GammaBB}
            |\xG(s)| \less |s|^{\xs - \fr12} \expp{-\tf12 \pi|t| + \tf16 |s|^{-1}} \qquad (\xs \geq 0).
        \>
    Second, in any halfplane $\xs\geq\xs_0$, as $|t| \to \infty$ one has \cite{titchmarsh1986theory}*{sec.\ 5.1}
        \<
            \label{eq:ZetaHalfplaneBB}
            |\xz(s)| \less |t|^{k} \qquad \text{for some $k = k(\xs_0)$.}
        \>

	Using inequalities \eqref{eq:tauAbsBB}, \eqref{eq:GammaBB}, and \eqref{eq:ZetaHalfplaneBB}, it follows that integral \eqref{eq:Psi0TruncError} is
		\[
			\less X^2 \int_T^\infty t^{c_1} e^{-\pi t/4} \,dt \less X^2 e^{-\pi T/5},
        \]
    whereby
        \<
            \label{eq:Psi0Trunc}
            \xY_0(e^{-\xt}) = \dint_{2-iT}^{2+iT} 2^{-s-1}(1-p^{-s})\xz(s+1)\xz(s)\xG(s)\xt^{-s} \,ds + O(X^2 e^{-\pi T/5}).
        \>
	The integrand in integral \eqref{eq:Psi0Trunc} is analytic in the halfplane $\xs \geq -\tf12$, with the exceptions of simple poles at $s=1$ and $s=0$; one may compute their residues to be
		\[
			\tf{\pi^2}{24} (1-\tf{1}{p})\xt^{-1} = \fc \xt^{-1} \qquad\text{and}\qquad -\tf14 \log p,	
		\]
	respectively. With these and \eqref{eq:GammaBB} and \eqref{eq:ZetaHalfplaneBB}, it is straightforward to deduce that
		\[
			\xY_0(e^{-\xt}) = \fc \xt^{-1} - \tf14\log p + O(X^2 e^{-\pi T/5}) + O(X^{-1/2}).
        \]
    Selecting $T = X$ and replacing $\xt^{-1}$ with $X^{-1}(1-2\pi iX\xb)$, we conclude that
        \<
            \label{eq:Psi0form}
			\xY_0(\xr e(\xb)) = \fr{\fc X}{1-2\pi iX\xb} - \tf14 \log{p} + O(X^{-\fr12}).
		\>

    Now for $\xY_1(\xr e(\xb))$, equation \eqref{eq:Psi1Mellin} states that
        \[
            \xY_1(\xr e(\xb)) = \ldint{2} (1-2^{-s-1})\xz(s+1)L(s,\xc)\xG(s)\xt^{-s} \,ds.
        \]
    The integrand here is analytic when $\xs\geq-\tf12$ except at $s=0$, where it has a double pole with residue
        \[
            -\tf12 L_0(\xc) \log{\xt} + \tf12\big[L_0'(\xc) + L_0(\xc)\log{2}\big].
        \]
    Using arguments similar to those for \eqref{eq:Psi0form}, one may check that
        \[
            \xY_1(\xr e(\xb)) = -\tf12 L_0(\xc) \log{\xt} + \tf12\big[L_0'(\xc) + L_0(\xc)\log{2}\big] + O(X^{-\fr12}),
        \]
    and combining this with \eqref{eq:Psi0form} yields formula \eqref{eq:prcPsi}.
\end{proof}

When $z = -\rho e(\xb) = -e^{-\xt}$ our formulae require only small changes. As $(-z)^{nk} = z^{nk}$ for even $k$, we have $\xY_0(-z)=\xY_0(z)$, and thus by \eqref{eq:Psi0form} we immediately conclude that
	\[
		\xY_0(-\xr e(\xb)) = \fr{\fc X}{1-2\pi iX\xb} - \tf14 \log{p} + O(X^{-\fr12}).
	\]
As $(-1)^{nk} = (-1)^n$ for odd $k$, we have
    \[
        \xY_1(-z) = \sum_{k\equiv  1\mod2} \sum_{n=1}^\infty (-1)^n k^{-1} \xc(n) z^{nk}, 
    \]
using which we derive a formula like \eqref{eq:Psi1Mellin} by replacing $L(s,\xc)$ with a suitable closed form for $\sum_{n=1}^\infty (-1)^n \xc(n)n^{-s}$. In particular, we may at once verify that 
	\<
        \label{eq:DirChi-1}
		\sum_{n=1}^\infty (-1)^n\xc(n)n^{-s} = (\xc(2)2^{1-s} - 1)L(s,\xc) \qquad (\Re(s)>1),
	\>
and using this in equation \eqref{eq:Psi1Mellin} we have
	\[
		\xY_1(-\xr e(\xb)) = \ldint{2} (1-2^{-s-1})(2^{1-s}\xc(2)-1)\xz(s+1)L(s,\xc)\xG(s)\xt^{-s} \,ds.
	\]
The integrand here has a double pole at $s=0$, this time with residue
	\begin{align*}
		& -\tf12(2\xc(2)-1) L_0(\xc)\log{\xt} + \tf12\big[(2\xc(2)-1)L'_0(\xc) - L_0(\xc)\log{2}\big].
	\end{align*}
Arguing as done for Lemma \ref{lem:prcPsi}, we easily establish the following lemma.

\begin{lemma}
	\label{lem:prcPsi*}
	For all $|\xb| \leq 3/(8\pi X)$, as $X \to \infty$ one has
		\[
            \xY(-\xr e(\xb)) =  \fr{\fc X}{1-2\pi iX\xb} + 2\xl_*\log\pfrac{X}{1-2\pi iX\xb} + \xL_* + O(X^{-\fr12}),
		\]
    where
        \[
            \fc=\tf{\pi^2}{24}(1-\tf1p), \qquad \xl_* = \tf14(2\xc(2)-1)L_0(\xc),
        \]
    and
        \[
            \xL_* = \tf12\big[(2\xc(2)-1)L'_0(\xc) - L_0(\xc)\log{2}\big] - \tf14\log{p}.
        \]
\end{lemma}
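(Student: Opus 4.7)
The plan is to mirror the proof of Lemma \ref{lem:prcPsi} with only the modifications forced by the sign change in $z$. As in \eqref{eq:Psi0Psi1Def}, split
\[
    𝛹(-ρe(β)) = 𝛹_0(-ρe(β)) + 𝛹_1(-ρe(β)).
\]
Since $(-z)^{nk} = z^{nk}$ for even $k$, the $𝛹_0$ piece is unchanged, so by \eqref{eq:Psi0form} we immediately inherit
\[
    𝛹_0(-ρe(β)) = \fr{𝒄 X}{1-2π iXβ} - \tf14 \log p + O(X^{-1/2}).
\]
Thus all the work is in $𝛹_1(-ρe(β))$.

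For that piece, since $(-1)^{nk} = (-1)^n$ when $k$ is odd, the Dirichlet series $\sum_n \chi(n)n^{-s}$ appearing in the derivation of \eqref{eq:Psi1Mellin} is replaced by $\sum_n (-1)^n\chi(n)n^{-s}$. Splitting this latter sum according to the parity of $n$ and pulling a factor $χ(2) 2^{-s}$ out of the even part yields the closed form $(χ(2)2^{1-s}-1)L(s,χ)$, which holds for $\Re s > 1$ and continues to the whole plane since $L(s,χ)$ is entire. Substituting into the derivation leading to \eqref{eq:Psi1Mellin}, the exchange of summation and integration (justified by absolute convergence on the line $\Re s = 2$) gives
\[
    𝛹_1(-ρe(β)) = \dint_{2-i\infty}^{2+i\infty} (1-2^{-s-1})(χ(2)2^{1-s}-1)ζ(s+1)L(s,χ)Γ(s)τ^{-s}\,ds.
\]

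Next I would truncate the contour at height $T$ and shift it to $\Re s = -\tf12$, exactly as was done for $𝛹_0$ using the bounds \eqref{eq:GammaBB} and \eqref{eq:ZetaHalfplaneBB} together with the control $|\arg τ|\leq π/4$ arising from $|β|\leq 3/(8π X)$; choosing $T = X$ absorbs the truncation error into $O(X^{-1/2})$ and the shifted integral contributes $O(X^{-1/2})$ as well. The integrand is holomorphic on $\Re s \geq -\tf12$ except at $s=0$, where $ζ(s+1)Γ(s)$ contributes a double pole and the remaining factor $(1-2^{-s-1})(χ(2)2^{1-s}-1)L(s,χ)τ^{-s}$ is analytic. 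Writing $s = 0 + u$ and expanding each factor to first order in $u$, one extracts the residue as the coefficient of $u^{-1}$ in the product; the $-\log τ$ piece comes from the $Γ(s)ζ(s+1)$ double pole multiplied by the value at $s=0$ of the remaining factor $(1-\tf14)(2χ(2)-1)L_0(χ)\cdot\tf43 = \tf12(2χ(2)-1)L_0(χ)$ (after the $\tf13(1-\tf14)\cdot\tf43$ numerics used implicitly already in Lemma \ref{lem:prcPsi}), and the constant piece gathers the logarithmic derivative terms $L_0'(χ)$ and the $\log 2$ contributions from $(1-2^{-s-1})(χ(2)2^{1-s}-1)$.

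The main obstacle is the bookkeeping of this residue at the double pole: one needs to keep track of the Taylor expansions of three factors that each contribute to the constant term and to verify that the combined constant is exactly $\tf12[(2χ(2)-1)L_0'(χ) - L_0(χ)\log 2]$. Once that is verified, combining the residue with the $-\tf14\log p$ from $𝛹_0(-ρe(β))$ and rewriting $τ^{-1} = X/(1-2π iXβ)$ and $-\log τ = \log(X/(1-2π iXβ))$ yields the stated formula with $λ_* = \tf14(2χ(2)-1)L_0(χ)$ and $𝛬_* = \tf12[(2χ(2)-1)L_0'(χ) - L_0(χ)\log 2] - \tf14\log p$, completing the proof.
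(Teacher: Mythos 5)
Your proposal follows the paper's own proof essentially verbatim: the same splitting into $\Psi_0$ and $\Psi_1$, the observation that $\Psi_0(-z)=\Psi_0(z)$, the identity $\sum_n(-1)^n\chi(n)n^{-s}=(\chi(2)2^{1-s}-1)L(s,\chi)$, and the truncation/shift to $\Re s=-\tfrac12$ with the residue at the double pole at $s=0$ producing the $\lambda_*$ and $\Lambda_*$ terms. The only blemish is the garbled parenthetical arithmetic (the ``$(1-\tfrac14)\cdot\tfrac43$'' numerics): the non-singular factor at $s=0$ is simply $(1-2^{-1})(2\chi(2)-1)L_0(\chi)=\tfrac12(2\chi(2)-1)L_0(\chi)$, which is the coefficient you correctly state, and the logarithmic-derivative expansion then yields exactly the paper's residue $-\tfrac12(2\chi(2)-1)L_0(\chi)\log\tau+\tfrac12\bigl[(2\chi(2)-1)L_0'(\chi)-L_0(\chi)\log 2\bigr]$.
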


\section{The relationships between \tops{$n$}{n}, \tops{$X$}{X}, and \tops{$X_*$}{X*}}
\label{sec:relations}

Recalling that $\xF(z)=\exp \xY(z)$, $\xr=\exp(-1/X)$, and $\xr_*=\exp(-1/X_*)$, we now consider how the radius of integration $\xr$ in 
	\[
        \label{eq:pncIntRecall}
		\dint_{|z|=\xr} \xF(z)z^{-n-1}\,dz = \xr^{-n} \int_0^1 \Phi(\xr e(\xa))e(-n\xa) \,d\xa = \xr^{-n} \int_0^1 \exp\lf(\xY(\xr e(\xa)) - 2\pi in\xa\rh) \,d\xa
	\]
should depend on $n$. Having the advance knowledge that this integral is dominated by the contributions from those $\xa$ close to $0$, $\fr12$, and $1$, i.e., those $\xa$ in $\fP$ and $\fP_*$, we pick $\xr = \xr(n)$ and $\xr_* = \xr_*(n)$ so as to individually minimize 
	\begin{align*}
		\xr^{-n} \exp \xY(\xr) &= \expb{\xY(\xr)+\fr{n}{X}} \qquad\text{and}\qquad\xr_*^{-n} \exp \xY(-\xr_*) = \expb{\xY(-\xr_*)+\fr{n}{X_*}}
	\end{align*}
as functions of $n$. We first focus on minimizing $\exp(\xY(\xr)+n/X)$.

It follows from Lemma \ref{lem:prcPsi} that
    \<
		\label{eq:Psirho}
		\xY(\xr) = \fc X + 2\xl\logX + \xL + O(X^{-1/2}) \qquad (X\to\infty),
    \>
where $\fc = \tf{\pi^2}{24}(1-\tf1p)$ and $\xl$ and $\xL$ are given in said lemma (though their particular values are not relevant here), and thus 
	\<
        \label{eq:Expbexpr}
		\expb{\xY(\xr)+\fr{n}{X}} = e^{\xL} \expb{ \fr{n}{X} + \fc X + 2\xl\logX + O(X^{-\fr12})}.
	\>
Since the $O(X^{-1/2})$ term in \eqref{eq:Expbexpr} is $o(1)$ as $X\to\infty$, it is reasonable to minimize $n/X + \fc X + 2\xl\logX$ as a function of $X$ there. This is minimized when $\fc X^2+2\xl X-n=0$, i.e., when
	\<
		\label{eq:XExact}
		X = \big(\rt{\fc n+\xl^2}-\xl\big)/\fc.
	\>
We thus define $X = X(n)$ by \eqref{eq:XExact} and note that
	\<
		\label{eq:XnAsymp}
		n/X = \fc X + 2\xl \qquad\text{and}\qquad X = (n/\fc)^{1/2} - \xl/\fc + O(n^{-1/2}),
	\>
where the latter relation holds as $n\to\infty$. 

As Lemma \ref{lem:prcPsi*} provides a formula for $\xY(-\xr_*)$ similar to \eqref{eq:Psirho}, namely the formula
    \[ 
        \xY(-\xr_*) = \fc X_* + 2\xl_*\log{X_*} + \xL_* + O(X_*^{-1/2}) \qquad (X_*\to\infty),
    \]
we are similarly led to define $X_*$ via
	\<
		\label{eq:X*Exact}
		X_* = \big(\rt{\fc n+\xl_*^2}-\xl_*\big)/\fc.
	\>
Doing so, we similarly have
	\<
		\label{eq:X*nAsymp}
		\nfr{n}{X_*} = \fc X_* + 2\xl_* \qquad\text{and}\qquad X_* = (n/\fc)^{1/2} - \nfr{\xl_*}{\fc} + O(n^{-1/2}),
	\>
the latter statement again holding as $n\to\infty$.

\section{Arc transference and \tops{$*$}{*}-arcs}
\label{sec:transference}

We now recall the ``arc transference'' of \cite{daniels2023mobius}*{sec.\ 9} used to modify the contour of integration for
    \<
        \label{eq:ints}
        \dint_{|z|=r} \Phi(z)z^{-n-1} \,dz = r^{-n} \int_0^1 \exp \xY(re(\xa)) e(-n\xa) \,d\xa,
    \>
where again the dashed integral $\dint$ abbreviates $\fr{1}{2\pi i} \int$. In this integral, control of our error terms when $\xa$ is near $0$ and $1$ requires that the radius $r$ be set to $\xr = \exp(-1/X)$, with $X$ defined as in \eqref{eq:XExact}. On the other hand, control of our error terms in \eqref{eq:ints} for $\xa$ near $\tf12$ similarly requires that $r = \xr_* = \exp(-1/X_*)$, with $X_*$ is defined as in \eqref{eq:X*Exact}. Despite the similar definitions of $X$ and $X_*$ in equations \eqref{eq:XExact} and \eqref{eq:X*Exact}, respectively, we cannot expect that $X$ and $X_*$ are equal in general. Indeed, in section \ref{sec:AsymptoticsMod8} we see that $X\neq X_*$ when $p\equiv 3\mmod{8}$. Thus, a compromise is needed.

As $\Phi(z)$ is analytic for $|z|<1$, we split the interval $[0,1)$ into the sets 
    \[
        \fA := [0,\tf14)\cup[\tf34,1) \qquad\text{and}\qquad \fA_* := [\tf14,\tf34),
    \]
and change the path of integration on the left side of \eqref{eq:ints} to be the positively oriented path consisting of the two semicircles 
    \[
        \xr e(\fA) := \{ \xr e(\xa) : \xa \in \fA \} \qquad\text{and}\qquad \xr_* e(\fA_*) := \{ \xr_* e(\xa) : \xa \in \fA_* \}
    \]
and the two connecting line segments $[i\xr,i\xr_*]$ and $[-i\xr_*,-i\xr]$. Using this new contour in \eqref{eq:ints}, we have
    \<
        \label{eq:pxcNewInt}
        \fp(n,\xc) = \dint_{\xr e(\fA)} \Phi(z) z^{-n-1}\,dz + \dint_{\xr_* e(\fA_*)} \Phi(z) z^{-n-1} \, dz + \fT(\xr,\xr_*),
    \>
where
    \[
        \fT(\xr,\xr_*) := \dint_{i\xr}^{i\xr_*} \Phi(z)z^{-n-1} \,dz + \dint_{-i\xr_*}^{-i\xr} \Phi(z)z^{-n-1} \,dz.
    \]

We now show that the transference term $\fT(\xr,\xr_*)$ is small compared to the main integrals in \eqref{eq:pxcNewInt}, both of which grow like $n^{c_0}\exp(2\rt{\fc n})$, with $\fc=\fr{\pi^2}{24}(1-\tfr{1}{p})$ and some constant $c_0$, as shown in sections \ref{sec:MainInts} and \ref{sec:Asymptotics}.

\begin{lemma}
    \label{lem:Transfer}
    Let $\fc=\fr{\pi^2}{24}(1-\tfr{1}{p})$, let $\xl$ and $\xl_*$ be real constants, and for $n > n_0$ let 
        \<
            \label{eq:XX*Transfer}
            X = \big(\rt{\fc n+\xl^2}-\xl\big)/\fc \qquad\text{and}\qquad X_* = \big(\rt{\fc n+\xl_*^2}-\xl_*\big)/\fc.
        \>
    As $n\to\infty$, one has 
        \[
            \fT(\xr,\xr_*) \less \exp(\tf32\rt{\fc n}).
        \]
\end{lemma}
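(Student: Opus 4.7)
The plan is to parametrize the two vertical line segments making up $\fT(\rho,\rho_*)$, observe that the only arguments $\alpha = \arg(z)/(2\pi)$ encountered along them are $1/4$ and $3/4$, and then apply the major arc estimate of Proposition \ref{prop:Major} at $V_4(\pm 1)$ to obtain a pointwise bound on $|\Phi(z) z^{-n-1}|$ comfortably below the required threshold.

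First I would parametrize the segment from $i\rho$ to $i\rho_*$ by $z = it = t\,e(\tf14)$ with $t$ varying between $\rho$ and $\rho_*$. Setting $Y = 1/\log(1/t)$, the quantity $Y$ varies monotonically between $X$ and $X_*$, and in particular $Y\to\infty$ with $n$. For $n$ sufficiently large that $Y^{1/3} > 4$, the rational $1/4$ lies in the major arc $\fM(1/4)$ with $\beta = 0$; since $p$ is odd, we have $p \nmid 4$ and $2 \mid 4$, so formula \eqref{eq:Vqcoprime} of Proposition \ref{prop:Major} yields $V_4(1) = 1/4$, whence
\[
    \Re \Psi(z) \leq \tf14\mathfrak{c}\, Y + O(Y^{8/9}).
\]

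Combining this with $|z|^{-(n+1)} = \exp((n+1)/Y)$ and the identity $n/Y = \mathfrak{c}\, Y + O(1)$ (which holds uniformly for $Y$ in the interval between $X$ and $X_*$, since \eqref{eq:XX*Transfer} gives $n/X = \mathfrak{c}\, X + 2\lambda$ and $n/X_* = \mathfrak{c}\, X_* + 2\lambda_*$ with $|X-X_*|=O(1)$), the pointwise bound on the integrand becomes
\[
    |\Phi(z) z^{-n-1}| \leq \exp\!\lf(\tf14 \mathfrak{c}\, Y + \tf{n}{Y} + O(Y^{8/9})\rh) = \exp\!\lf(\tf54 \mathfrak{c}\, Y + O(Y^{8/9})\rh).
\]
Since $\mathfrak{c}\, Y = \sqrt{\mathfrak{c}\, n}(1+o(1))$, the exponent is $\tf54 \sqrt{\mathfrak{c}\, n}(1+o(1))$, which is eventually dominated by $\tf32 \sqrt{\mathfrak{c}\, n}$ because $\tf54 < \tf32$. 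Meanwhile, the length of the segment is $|\rho - \rho_*| = O(|X - X_*|/(XX_*)) = O(1/n)$.

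A symmetric argument handles the segment from $-i\rho_*$ to $-i\rho$ via $\alpha = 3/4$, for which $V_4(3) = 1/4$ by the same case of Proposition \ref{prop:Major}. Summing the two contributions gives $\fT(\rho,\rho_*) \less \exp(\tf32\sqrt{\mathfrak{c}\, n})$ as required. The main (mild) technical step will be justifying that \eqref{eq:Major} applies uniformly in $Y \in [\min(X,X_*), \max(X,X_*)]$; this is immediate because $\beta = 0$ at $\alpha \in \{1/4,3/4\}$, so the major arc formula depends on $Y$ only through its explicit leading term, and the short length $|X-X_*|=O(1)$ makes this harmless.
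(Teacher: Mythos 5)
Your proposal is correct and follows essentially the same route as the paper: both arguments rest on Proposition \ref{prop:Major} with $V_4(\pm1)=\tfrac14$ to bound $\Re\Psi$ on the two vertical segments, combine this with $n/Y = \mathfrak{c}Y + O(1) \sim \sqrt{\mathfrak{c}n}$ to get an exponent of about $\tfrac54\sqrt{\mathfrak{c}n} < \tfrac32\sqrt{\mathfrak{c}n}$, and dispose of the short segments trivially. The only (immaterial) difference is bookkeeping: you bound the full integrand pointwise and multiply by the segment length $O(1/n)$, whereas the paper takes the supremum of $|\Phi|$ and integrates $t^{-n-1}$ exactly.
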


\begin{proof}
    Fix $\xe>0$ and let $n>n_0$. It is evident that $\fT(\xr,\xr_*) = -\fT(\xr_*,\xr)$, so there is no loss of generality in assuming that $X \leq X_*$, and therefore that $\xr \leq \xr_*$. We first note, by \eqref{eq:XX*Transfer}, that 
        \[
            n/X = \fc X + 2\xl \qquad\text{and}\qquad n/X_* = \fc X_* + 2\xl_*,
        \]
    and further note that as $n\to\infty$ we have
		\begin{align*}
			\fc X+2\xl &= \rt{\fc n+\xl^2} + \xl = \rt{\fc n} + O(n^{-1/2}), \\
			\fc X_*+2\xl_* &= \rt{\fc n+\xl_*^2} + \xl_* = \rt{\fc n} + O(n^{-1/2}).	
		\end{align*}
    Next, observing that $\xr^{-n} = e^{n/X}$ and $\xr_*^{-n} = e^{n/X_*}$, it follows that
        \begin{align}
            \label{eq:TIntBB}
            \begin{aligned}
            \int_{\xr}^{\xr_*} \frac{dt}{t^{n+1}} = \fr{1}{n}(\xr^{-n}-\xr_*^{-n}) = \fr{1}{n}(e^{\fc X+2\xl} - e^{\fc X_*+2\xl_*}) \less e^{\rt{\fc n}}.
            \end{aligned}
        \end{align}
	
    Now let $t = \exp(-1/X_t)$ for $\xr \leq t \leq \xr_*$. Recalling that $V_4(1) = V_4(3) = \fr14$ from the remark following Proposition \ref{prop:Major}, we have
        \<
            \label{eq:PsiitBB}
            \Re \xY(it) = \tf14 \fc X_t + O(X_t^{8/9}) \leq \tf14 \fc X_* + O(X_*^{8/9}) = \tf14 \rt{\fc n} + O(n^{4/9})
        \>
	for $\xr \leq t \leq \xr_*$, which, for $n > n_0$, is less than $(\tf14+\xe)\rt{\fc n}$. Using inequalities \eqref{eq:TIntBB} and \eqref{eq:PsiitBB}, we find that 
        \[
            \lf| \dint_{i\xr}^{i\xr_*} \Phi(z)z^{-n-1} \,dz \rh| 
            \less \exp((\tf14+\xe)\rt{\fc n}) \int_{\xr}^{\xr_*}\frac{dt}{t^{n+1}} \less \exp((\tf54+\xe)\rt{\fc n}),
        \]
    and the result follows by taking $\xe$ to be sufficiently small.
\end{proof}

\begin{corollary}
    \label{cor:Transfer}
    For $n > n_0$ let $\xr=e^{-1/X}$ and $\xr_*=e^{-1/X_*}$, with $X$ and $X_*$ as defined in Lemma \ref{lem:Transfer}. As $n\to\infty$ one has
        \[
            \begin{aligned}
                \fp(n,\xc_p) 
                &= \xr^{-n} \int_{\fA} \Phi(\xr e(\xa)) e(-n\xa) \, d\xa + \xr_*^{-n} \int_{\fA_*} \Phi(\xr_* e(\xa)) e(-n\xa)\,d\xa + O\big( e^{\fr32\sqrt{\fc n}} \big).
            \end{aligned}
        \]
\end{corollary}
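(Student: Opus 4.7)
The plan is to derive the corollary as a direct consequence of the contour decomposition \eqref{eq:pxcNewInt} combined with Lemma \ref{lem:Transfer}. First I would take equation \eqref{eq:pxcNewInt}, namely
\[
𝔭(n,χ) = \dint_{ρ e(\fA)} \Phi(z) z^{-n-1}\,dz + \dint_{ρ_* e(\fA_*)} \Phi(z) z^{-n-1} \, dz + \fT(ρ,ρ_*),
\]
as the starting point. This decomposition is valid because $\Phi(z)$ is analytic on $|z|<1$, so the original integral over $|z|=ρ$ (or equivalently, a combination of both circles) may be deformed to the path consisting of the two semicircles $ρe(\fA)$ and $ρ_*e(\fA_*)$ joined by the short vertical segments $[iρ,iρ_*]$ and $[-iρ_*,-iρ]$.

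Next I would parametrize the two semicircular contour integrals. On $ρe(\fA)$ write $z = ρe(α)$ with $α \in \fA$, so that $dz = 2\pi i z\,dα$, and hence
\[
\dint_{ρe(\fA)} \Phi(z) z^{-n-1}\,dz = ρ^{-n}\int_{\fA} \Phi(ρe(α))e(-nα)\,dα.
\]
An identical parametrization $z = ρ_*e(α)$ on the arc $ρ_*e(\fA_*)$ yields the analogous expression for the second integral.

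Finally, I would appeal to Lemma \ref{lem:Transfer}, which, given the definitions of $X$ and $X_*$ in \eqref{eq:XX*Transfer}, asserts that $\fT(ρ,ρ_*) \ll \exp(\tfrac{3}{2}\sqrt{𝒄n})$ as $n\to\infty$. Substituting this bound and the two parametrized integrals into \eqref{eq:pxcNewInt} gives precisely the claimed formula. There is no substantive obstacle here; the corollary is essentially a bookkeeping exercise once Lemma \ref{lem:Transfer} is in hand, and the only points requiring care are the correct orientation of the semicircles (so that the parametrization intervals $\fA$ and $\fA_*$ carry the standard positive orientation inherited from $[0,1)$) and the verification that the two connecting segments are exactly what is bundled into $\fT(ρ,ρ_*)$.
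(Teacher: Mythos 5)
Your proposal is correct and coincides with the paper's (implicit) argument: the corollary is obtained exactly by parametrizing the two semicircular pieces of the contour in \eqref{eq:pxcNewInt} as $z=ρe(α)$ and $z=ρ_*e(α)$ (with the factor $2πi$ absorbed by the dashed integral) and invoking Lemma \ref{lem:Transfer} to bound $\fT(ρ,ρ_*)$. No gaps.
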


\section{The contributions of the nonprincipal arcs}
\label{sec:Nonprincipal}

We now show, for odd primes $p\neq 5$, that the magnitude of $\xF(\xr e(\xa))$ is largest when $\xa$ is close to $0$, $1$, or $\fr12$. As $|\xF(z)| = \exp(\Re \xY(z))$, we accomplish this by using the formulae of Proposition \ref{prop:Major} to compare $\Re \xY(\xr e(\xa))$ for different $\xa$. We again recall that
    \begin{align*}
        \fP &= \{\xa \in [0,1) : \|\xa\| \leq 3/(8\pi X) \}, \\
        \fP_* &= \{\xa \in [0,1) : |\xa-\tf12| \leq 3/(8\pi X_*) \}.
    \end{align*}

\begin{lemma}
    \label{lem:RePsiBB}
	Let $p\neq 5$ be an odd prime and let $X$ and $X_*$ be sufficiently large. There exists a constant $0<\xd<1$ such that: for all $\xa \in \fA\setminus\fP$ one has
		\<
			\label{eq:PsiReBB}
			\Re \xY(\xr e(\xa)) \leq \xd \xY(\xr),
		\>
	and for all $\xa \in \fA_*\setminus\fP_*$ one has
		\<
			\label{eq:PsiRe*BB}
			\Re \xY(\xr_*e(\xa)) \leq \xd \xY(-\xr_*). 
		\>
\end{lemma}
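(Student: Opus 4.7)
The plan is to decompose $\fA\setminus\fP$ (and analogously $\fA_*\setminus\fP_*$) via the major/minor arc splitting of Sections~\ref{sec:Minor}--\ref{sec:major} at level $Q=X^{1/3}$, and to bound $\Re\Psi(\rho e(\alpha))$ on each piece. By Lemma~\ref{lem:prcPsi} one has $\Psi(\rho)=\mathfrak{c}X+O(\log X)$ with $\mathfrak{c}=\tf{\pi^2}{24}(1-\tf1p)$, so it suffices to exhibit $\delta_0<1$ with $\Re\Psi(\rho e(\alpha))\leq\delta_0\mathfrak{c}X+o(\mathfrak{c}X)$ uniformly on $\fA\setminus\fP$. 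On the minor arcs $\fm(X,X^{1/3})$ the required bound follows at once from Theorem~\ref{thm:Minor}, which gives $\Re\Psi(\rho e(\alpha))\ll X/\log X=o(\mathfrak{c}X)$.

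On a major arc $\fM(a/q)$, Proposition~\ref{prop:Major} gives
\[
    \Re\Psi(\rho e(\alpha)) = \mathfrak{c}X\,\Re\lf(\fr{V_q(a)}{1-2\pi iX\beta}\rh) + O(X^{8/9}).
\]
Writing $V_q(a)=Re^{i\phi}$ and $t=2\pi X\beta$, a short computation yields
\[
    \sup_{t\in\rr}\Re\lf(\fr{V_q(a)}{1-it}\rh)=R\cos^2(\phi/2)=\tf12\big(|V_q(a)|+\Re V_q(a)\big),
\]
attained at $t=-\tan(\phi/2)$. It therefore remains to show that $\tf12(|V_q(a)|+\Re V_q(a))\leq\delta_0<1$ for each relevant pair $(q,a)$, apart from the ``peak'' pairs on which this quantity equals $1$ and a $\beta$-based argument is used instead.

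The peak pairs are those with $V_q(a)=1$: by the remark following Proposition~\ref{prop:Major} these are $(q,a)\in\{(1,0),(1,1)\}$ (intersecting $\fA$) and $(q,a)=(2,1)$ (intersecting $\fA_*$). On each, the hypothesis $\alpha\notin\fP$ (respectively $\alpha\notin\fP_*$) forces $|2\pi X\beta|>3/4$, so
\[
    \Re\lf(\fr{V_q(a)}{1-2\pi iX\beta}\rh)=\fr{1}{1+(2\pi X\beta)^2}<\fr{16}{25}.
\]
For every other relevant pair, the formulae of Proposition~\ref{prop:Major} give $|V_q(a)|\ll 1/q$ (with a factor $\sqrt{p}$ coming from the Gauss sum $G(\chi)$ when $p\mid q$ and $q$ is odd), so outside a finite list of small $q$ the bound $\tf12(|V_q(a)|+\Re V_q(a))<1$ is automatic, and the remaining finitely many pairs are dispatched by direct inspection of \eqref{eq:Vqcoprime}--\eqref{eq:Vqprime}.

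The main technical difficulty is the case $p\mid q$ with $q$ odd and small, particularly $q=p$, where the product $G(\chi)L_2(\chi)/\mathfrak{c}$ can inflate $|V_p(a)|$ past $1$. Here one must exploit that the large contribution of $G(\chi)$ lives in $\Im V_p(a)$ (indeed $G(\chi)$ is pure imaginary when $p\equiv 3\mmod 4$), while $\Re V_p(a)$ retains the moderate term $-1/p$; balancing the two keeps $\tf12(|V_p(a)|+\Re V_p(a))$ uniformly below $1$. The exclusion $p\neq 5$ sidesteps the borderline situation in which the arcs $\fM(a/5)$ would contribute nontrivially to the main asymptotics of $\mathfrak{p}(n,\chi_5)$ and must instead be treated as principal arcs, as is done in Section~\ref{sec:5}. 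The argument for \eqref{eq:PsiRe*BB} is entirely parallel, substituting $\Psi(-\rho_*)\sim\mathfrak{c}X_*$ from Lemma~\ref{lem:prcPsi*} for the baseline.
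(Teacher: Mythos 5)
Your framework is the same as the paper's: minor arcs are disposed of by Theorem \ref{thm:Minor}, major arcs by Proposition \ref{prop:Major}, the peak arcs around $0$, $1$, and $\tfrac12$ by the observation that $\alpha\notin\fP$ (resp.\ $\fP_*$) forces $|2\pi X\beta|>3/4$, and everything else by showing a normalized real part stays below some $\delta<1$. Your identity $\sup_{t\in\rr}\Re\bigl(V_q(a)/(1-it)\bigr)=\tfrac12\bigl(|V_q(a)|+\Re V_q(a)\bigr)$ is correct, and in the decisive case $q=p$ it reduces to exactly the inequalities the paper verifies: writing $W_p=(4-\chi(2))L_2(\chi)/\bigl((1-1/p)\zeta(2)\bigr)$, one needs $(\sqrt{p}\,W_p-1)/p<1$ when $p\equiv1\mmod{4}$ and $W_p^2<4(p+1)$ when $p\equiv3\mmod{4}$.

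The gap is that you never establish these inequalities; ``balancing'' plus ``direct inspection'' is where the entire content of the lemma lives. What is needed is (i) explicit two-sided control of $L_2(\chi_p)$ --- the paper proves $1<W_p\le(4-\chi(2))(1+1/p)$ --- which settles all sufficiently large $p$, and (ii) numerical evaluation at the finitely many small primes where that crude bound fails: $p=13$ (the crude bound gives roughly $1.4>1$, while the true value of $(\sqrt{p}\,W_p-1)/p$ is about $0.69$) and $p=3$ (crude bound $25/9$, true value about $0.79$). Some quantitative input on $L_2(\chi_p)$ is unavoidable here, because at $p=5$ the relevant quantity equals exactly $1$; in particular a soft estimate like $|V_q(a)|\ll 1/q$ cannot carry the argument at $q=p$, where $|V_p(a)|$ is of size $W_p/\sqrt{p}$. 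Moreover, your explanation of the hard case --- that the large Gauss-sum contribution sits in $\Im V_p(a)$ --- applies only to $p\equiv3\mmod{4}$; for $p\equiv1\mmod{4}$ the Gauss sum $\sqrt{p}$ is real, the entire term $\chi(a)\sqrt{p}\,W_p/p$ sits in $\Re V_p(a)$, and this is precisely where the delicate primes $p=5$ and $p=13$ live. Your proposal leaves that case to unexplained inspection, so the crucial quantitative step is missing rather than merely compressed.
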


\begin{proof}
    Letting $Q = X^{1/3}$, we suppose that $\xa\in\fA$ can be written as $\xa = a/q+\xb$ with $0 \leq a \leq q \leq X^{1/3}$, $(a,q)=1$, and $|\xb| \leq 1/(qX^{2/3})$. Defining 
        \[
            \xD = (1+4\pi^2X^2\xb^2)^{-\nfr12} \qquad\text{and}\qquad \varphi = \arctan(2\pi X\xb)
        \]
    with $\arctan(u) \in (-\tf{\pi}{2},\tf{\pi}{2})$, equation \eqref{eq:Major} states that
        \[
            \xY(\xr e(\xa)) = V_q(a)\fc X\xD e^{i\varphi} + O(X^{\nfr89}).
        \]
    Recalling that $V_1(1)= 1$ (which we reconfirmed in Lemma \ref{lem:prcPsi}) we have
        \[
            \xY(\xr) = \fc X + O(X^{\nfr89}),
        \]
    and thus to establish inequality \eqref{eq:PsiReBB} it suffices to show that 
        \<
            \label{eq:ReBB}
            \Re(V_q(a)\xD e^{i\varphi}) \leq \xd \qquad \text{for some $\xd < 1$.}
        \>

    If $q>2$ and $(p,q)=1$, then $V_q(a) \leq \fr14$ by \eqref{eq:Vqcoprime}. As $V_q(a) < 0$ when $2p\mid q$, inequality \eqref{eq:ReBB} holds \emph{a fortiori} as $X\to\infty$. It thus remains to consider the case where $p\mid q$ but $2\nmid q$, say $q=(2r-1)p$ with $r \in \nn$. Here
        \<
            \label{eq:VqDele}
            V_q(a)\xD e^{i\varphi} = \lf[ \xc(a)G(\xc)\lf(1-\fr{\xc(2)}{4}\rh)\fr{L_2(\xc)}{\fc} - 1 \rh]\frac{\xD e^{i\varphi}}{(2r-1)^2p},
        \>
    where again $L_2(\chi)=L(2,\chi)$. Since we are interested in upper bounds for the real part of \eqref{eq:VqDele}, it suffices to let $r=1$. 
    
    To handle the cumbersome term in square brackets in \eqref{eq:VqDele}, it is convenient to let
        \[
            W_p 
            = \lf(1-\fr{\xc(2)}{4}\rh)\fr{L_2(\xc)}{\fc} 
            = \fr{6(4-\xc(2))L_2(\xc)}{\pi^2(1-1/p)} \qquad (\text{since $\fc = \tf{\pi^2}{24}(1-\tf1p)$})
        \]
    and record the following crude bounds on $W_p$. Namely, because
        \[
            L_2(\xc) = \prod_{\substack{\l\text{ prime}\\ \l\neq p}} \Big(\fr{1}{1-\xc(\l)\l^{-2}}\Big) \geq \prod_{\substack{\l\text{ prime}\\ \l\neq p}} \Big(\fr{1}{1+\l^{-2}}\Big) = (1+p^{-2})\fr{\xz(4)}{\xz(2)} = (1+p^{-2})\fr{\pi^2}{15}
        \]
    and
        \[
            L_2(\xc) = \sum_{k=1}^\infty \fr{\xc(k)}{k^2}\leq\sum_{k=1}^\infty \fr{\xc_0(k)}{k^2} = (1-p^{-2})\fr{\pi^2}{6},
        \]
    after simplifying we deduce that
        \<
            \label{eq:WBB}
            1 < \tf25 (4-\xc(2))\pfr{1+p^{-2}}{1-p^{-1}} \leq W_p \leq (4-\xc(2))\left(1+\frac{1}{p}\right).
        \>

    Next we recall the classical facts that the Gauss sum $G(\xc_p)=\rt p$ if $p\equiv 1\mmod4$, and $G(\xc_p)=i\rt p$ if $p\equiv 3\mmod4$. 
    
    Now suppose that $p \equiv  1 \mmod{4}$. As $\xD \leq 1$, it follows from \eqref{eq:VqDele} and \eqref{eq:WBB} that
        \[
            \Re(V_q(a)\xD e^{i\varphi}) \leq (\rt{p}W_p - 1)/p,
        \]
    and thus it suffices to show that $(\rt{p}W_p - 1)/p < 1$. Numerical estimates show that, among primes $p > 5$ such that $p\equiv 1 \mmod{4}$, one has
        \[
            \fr{\rt{p}W_p-1}{p} \leq W_{13} = \fr{1}{13} \bigg( \fr{65\sqrt{13}}{2\pi^2}L_2(\chi_{13}) - 1 \bigg) \approx 0.69,
        \]
    which validates inequality \eqref{eq:ReBB} when $p \equiv  1 \mmod{4}$ (and $p \neq 5$) and $\xa \in \fA \cap \fM(\tf{a}{(2r-1)p})$. 

    We remark that since $L_2(\xc_5) = \tf{4\pi^2\rt{5}}{125}$ (see, e.g., Lemma \ref{lem:MV10.1.14}), one has
        \[
            \tf15(\rt{5}W_5 - 1) = 1,	
        \]
    which implies that $V_5(1)=V_5(4)=1$. Thus $\xY(\xr e(\fr15))$ and $\xY(\xr e(\tf45))$ are both ``on par'' with $\xY(\xr)$ and $\xY(-\xr_*)$ as $\xr$ and $\xr_*$ tend to $1$. This is precisely why we specially treat $\xY(z,\xc_5)$ and $\fp(n,\xc_5)$ in section \ref{sec:5}.

    Returning to \eqref{eq:VqDele}, suppose now that $p\equiv 3\mmod4$. Now $G(\xc)=i\rt p$, whereby
        \[
            V_q(a)\xD e^{i\varphi} = \fr1p \big( i\xc(a)\rt{p}W_p - 1 \big)\xD e^{i\varphi}.
        \]
    Since 
        \[
            e^{i\varphi} = \fr{1+ i(2\pi X\xb)}{(1+4\pi^2X^2\xb^2)^{1/2}} = \xD(1+i(2\pi X\xb))
        \] 
    and $\xD^2 = (1+(2\pi X\xb)^2)^{-1}$, we have
        \<
            \label{eq:RePart}
            \Re(V_q(a)\xD e^{i\varphi}) = \fr{-1-(2\pi X\xb)\xc(a)\rt{p}W_p}{(1+(2\pi X\xb)^2)p}.
        \>
    As we are interested in bounding \eqref{eq:RePart} above by some $\xd<1$, it suffices to let $\xc(a)=-1$ and $\xb>0$, set $u:=(2\pi X\xb)$, and show that
        \[
            \fr{-1+u\rt{p}W_p}{(1+u^2)p} < 1, \qquad\text{i.e., that}\qquad u^2 - (W_p/\rt{p})u + (1+1/p) > 0,
        \]
    for \emph{all} $u \in \rr$. This holds if and only if $(1+\nfr{1}{p}) > W_p^2/(4p)$, or equivalently if and only if $W_p^2/(4(p+1))<1$. Using the crude bound \eqref{eq:WBB} we have
        \<
            \label{eq:p34CB}
            \fr{W^2_p}{4(p+1)} \leq \fr{(4-\xc(2))^2(1+\nfr1p)^2}{4(p+1)}, 
        \>
    and for $p \geq 7$ the right side of \eqref{eq:p34CB} is maximized when $p = 11$ (regardless of the residue of $p$ modulo 4), where it has value $\tf{300}{484} \approx 0.62$. Numerically computing $W_3^2/16$ we have
        \[
            \tf{1}{16}W_3^2 = \tfr{1}{16}\lf(\tf{45}{\pi^2}L_2(\chi_3)\rh)^2 \approx 0.79, 	
        \]
    and therefore inequality \eqref{eq:ReBB} holds when $p \equiv  3 \mmod{4}$ and $\xa \in \fA \cap \fM(\tf{a}{(2r-1)p})$.

    In total now, we have established that inequality \eqref{eq:PsiReBB} holds for $\xa \in \fA\cap\fM(X,X^{1/3}) \setminus (\mathfrak{M}(\frac{0}{1})\cup\mathfrak{M}(\frac{1}{1}))$. Since Theorem \ref{thm:Minor} establishes that
        \[
            \xY(\xr e(\alpha)) \ll X/\log{X} \qquad \text{for $\alpha \in \fA \cap \fm(X,X^{1/3})$},
        \]
    inequality \eqref{eq:PsiReBB} holds \emph{a-fortiori} for those $\alpha$ as well. Thus, it remains to handle those $\xa \in (\fM(\fr{0}{1}) \cup \fM(\fr11))\setminus \fP$. However, if $\xa=\xb \in \fM(\fr01)$ or $\xa=1-\xb \in \fM(\fr11)$ with $\xb \geq 3/(8\pi X)$, then inequality \eqref{eq:ReBB} is immediate since
        \[
            \xD \leq (1 + \tf{36}{64})^{-\fr12} < 1,
        \]
    completing the proof of inequality \eqref{eq:PsiReBB}. The arguments for \eqref{eq:PsiRe*BB} for $\xa \in \fA_* \setminus \fP_*$ and \eqref{eq:PsiRe*BB} are identical, mutatis mutandis.
\end{proof}

Recalling the result of Corollary \ref{cor:Transfer}, we now use Lemma \ref{lem:RePsiBB} to further isolate the main terms in our asymptotic formula for $\fp(n,\xc_p)$, which we recall are ultimately of order $n^{c_0}\exp(2\rt{\fc n})$ for some $c_0$.

\begin{proposition}
	\label{prop:pncReduc}
	Let $p\neq 5$ be an odd prime, let $\fc=\tf{\pi^2}{24}(1-\tf{1}{p})$, let $\xl$ and $\xl_*$ be the real constants from Lemmata \ref{lem:prcPsi} and \ref{lem:prcPsi*}, respectively, and for $n > n_0$ let
        \<
            \label{eq:XX*ExactRecall}
            X = \big(\rt{\fc n+\xl^2}-\xl\big)/\fc \qquad\text{and}\qquad X_* = \big(\rt{\fc n+\xl_*^2}-\xl_*\big)/\fc.
        \>
    For some constant $\xd > 0$, as $n\to\infty$ one has
		\[
            \fp(n,\xc_p) = \xr^{-n} \int_{\fP} \Phi(\xr e(\xa)) e(-n\xa) \, d\xa + \xr_*^{-n} \int_{\fP_*} \Phi(\xr_*e(\xa)) e(-n\xa) \,d\xa + O(e^{(2-\xd)\rt{\fc n}}).
		\]
\end{proposition}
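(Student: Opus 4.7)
The plan is to combine Corollary \ref{cor:Transfer} with the bounds from Lemma \ref{lem:RePsiBB}, using the asymptotics from section \ref{sec:relations} to control the exponential size of the non-principal contributions.

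First, I would start from the decomposition of Corollary \ref{cor:Transfer}, which already gives
\[
    𝔭(n,χ_p) = ρ^{-n} \int_{\fA} \Phi(ρ e(α)) e(-nα) \, dα + ρ_*^{-n} \int_{\fA_*} \Phi(ρ_* e(α)) e(-nα) \, dα + O\bigl(e^{\frac{3}{2}\sqrt{𝒄n}}\bigr),
\]
since the transference error is already of the form $O(e^{(2-δ)\sqrt{𝒄n}})$ with $δ = ½$. Thus it only remains to split each of the two main integrals as $\int_\fA = \int_\fP + \int_{\fA \setminus \fP}$ and $\int_{\fA_*} = \int_{\fP_*} + \int_{\fA_* \setminus \fP_*}$, and to show that the nonprincipal pieces are absorbed into the error.

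Second, I would bound the contribution from $\fA \setminus \fP$ using Lemma \ref{lem:RePsiBB}: on this set $|\Phi(ρ e(α))| = \exp(\Re 𝛹(ρ e(α))) \leq \exp(δ\, 𝛹(ρ))$ for some fixed $δ \in (0,1)$. Since $|\fA \setminus \fP| \leq 1$ and $|e(-nα)|=1$, this gives
\[
    \biggl| ρ^{-n}\int_{\fA\setminus\fP} \Phi(ρ e(α))e(-nα)\,dα \biggr| \leq \exp\bigl(n/X + δ\,𝛹(ρ)\bigr).
\]
Now from \eqref{eq:XnAsymp} one has $n/X = 𝒄X + 2λ = \sqrt{𝒄n} + O(1)$, and by Lemma \ref{lem:prcPsi} (the case $β=0$) one has $𝛹(ρ) = 𝒄X + 2λ\log X + 𝛬 + O(X^{-1/2}) = \sqrt{𝒄n} + O(\log n)$. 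Consequently
\[
    n/X + δ\,𝛹(ρ) = (1+δ)\sqrt{𝒄n} + O(\log n),
\]
so the bound becomes $O\bigl(e^{(1+δ)\sqrt{𝒄n} + O(\log n)}\bigr)$. Since $δ < 1$, this is $O\bigl(e^{(2-δ')\sqrt{𝒄n}}\bigr)$ for any $0 < δ' < 1 - δ$, as required.

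Third, the integral over $\fA_* \setminus \fP_*$ is handled in exactly the same way with the parallel inequality \eqref{eq:PsiRe*BB}, the relations for $X_*$ in \eqref{eq:X*nAsymp}, and Lemma \ref{lem:prcPsi*} in place of their unstarred counterparts. Combining the two nonprincipal estimates with the transference error in Corollary \ref{cor:Transfer} yields the stated identity. The only real obstacle is ensuring that the constant $δ$ coming from Lemma \ref{lem:RePsiBB} is strictly less than $1$ uniformly in the odd primes $p \neq 5$ under consideration; this is exactly what Lemma \ref{lem:RePsiBB} guarantees (and is precisely why the case $p = 5$, where $V_5(1) = V_5(4) = 1$, must be excluded here).
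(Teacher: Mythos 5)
Your proposal is correct and follows essentially the same route as the paper's proof: starting from Corollary \ref{cor:Transfer}, bounding the integrals over $\fA\setminus\fP$ and $\fA_*\setminus\fP_*$ via Lemma \ref{lem:RePsiBB} together with the relations $n/X=\fc X+2\lambda$ and $\Psi(\rho)=\fc X+2\lambda\log X+O(1)$, so that each nonprincipal piece is $O\bigl(e^{(1+\delta)\sqrt{\fc n}+O(\log n)}\bigr)=O\bigl(e^{(2-\delta')\sqrt{\fc n}}\bigr)$, and then absorbing the transference error. One small remark: no uniformity of $\delta$ in $p$ is required, since $p$ is fixed in the proposition and Lemma \ref{lem:RePsiBB} supplies a $\delta<1$ for each fixed odd $p\neq 5$.
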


\begin{proof}
    Recalling from Lemma \ref{lem:prcPsi} that 
        \[
            \xY(\xr) = \fc X + 2\xl\logX + \xL + O(X^{-1/2}),
        \]
    with $\xl$ and $\xL$ given in said lemma (though the exact values are irrelevant here), Lemma \ref{lem:RePsiBB} implies the existence of some $\xd_1 < 1$ such that
        \<
        \label{eq:A-PInt}
        \begin{aligned}
            \xr^{-n} \int_{\fA \setminus \fP} \xF(\xr e(\xa))e(-n\xa) \,d\xa \less \xr^{-n} e^{\xd_1\xY(\rho)} \less \exp\lf(\tfr{n}{X}+\xd_1(\fc X+2\xl\logX)\rh).
        \end{aligned}
        \>
    We now note that the first equation in \eqref{eq:XX*ExactRecall} implies that $n/X = \fc X+2\xl$. As $\xd_1(\fc X+2\xl\logX) < \xd_2\fc X$ for some $\xd_1<\xd_2<1$ and all $X>X_0(\xd_1)$, again using \eqref{eq:XX*ExactRecall} we have
        \[
            \tf{n}{X} + \xd_1(\fc X + 2\xl\logX) < (1+\xd_2)\fc X + 2\xl = (1+\xd_2)\rt{\fc n+\xl^2} + (1-\xd_2)\xl.
        \]
    Since, as $n \to \infty$, one has
        \[
            (1+\xd_2)\rt{\fc n+\xl^2} + (1-\xd_2)\xl \sim (1+\xd_2)\rt{\fc n} < (2-\xd_3)\rt{\fc n}
        \]
    for some $\xd_3>0$, it follows that
        \[
            \exp(\tfr{n}{X}+\xd_1(\fc X+2\xl\logX)) \less \exp((2-\xd_3)\rt{\fc n}).
        \]
    From \eqref{eq:A-PInt} we conclude that
        \[
            \xr^{-n} \int_{\fA \setminus \fP} \exp(\xY(\xr e(\xa))-2\pi in\xa) \,d\xa \less \exp((2-\xd_3)\rt{\fc n}),
        \]
    and an analogous argument, using Lemmata \ref{lem:prcPsi*} and \ref{lem:RePsiBB} and equations \eqref{eq:XX*ExactRecall}, provides a similar bound for the integral over $\fA_* \setminus \fP_*$. By assuming, without loss of generality, that $\xd_3$ is small enough that $\fr32 < 2-\xd_3$, we obtain the result by taking $\xd = \xd_3$.
\end{proof}

\section{The principal integrals}
\label{sec:MainInts}

We now apply the formulae of sections \ref{sec:princ} and \ref{sec:relations} to derive asymptotic formulae for the integrals over $\fP$ and $\fP_*$. As the arguments for both integrals are so similar, and because we must later consider other ``principal'' integrals in sections \ref{sec:5} and \ref{sec:Kron}, we handle these cases simultaneously in the following general proposition.

\begin{proposition}
    \label{prop:MainInt}
    Let $0 \leq a \leq q$ and $(a,q)=1$, and let $\fc$, $\xk$, and $K$ be constants dependent on $p$, $q$ and $a$, with $\fc>0$. For $n>n_0$ let
        \<
            \label{eq:Xnlambda}
            X = \big(\rt{\fc n+\xk^2}-\xk\big)/\fc,
        \>
    and suppose that, for $n > n_0$ and $|\xb| \leq 3/(8\pi X)$, one has
        \<
            \label{eq:Psirhoalpha}
            \xY(\xr e(\tf{a}{q}+\xb)) = \fr{\fc X}{1-2\pi iX\xb} + 2\xk\log\pfr{X}{1-2\pi iX\xb} + K + O(X^{-\fr12}).
        \>
    Then, as $n\to\infty$, one has
        \<
            \label{eq:MainInt}
            \xr^{-n} \int_{\fP(a/q,X)} \Phi(\xr e(\xa))e(-n\xa) \,d\xa 
            = e\Big(\fr{-na}{q}\Big)\pfr{n}{\fc}^\xk e^{K} \fr{\fc^{\fr14}e^{2\rt{\fc n}}}{(4\pi)^{\fr12}n^{\fr34}} \lf[1+O(n^{-\fr15})\rh],
        \>
    where
        \[
            \fP(a/q,X) = \{ a/q + \xb : \xb \in \rr \text{ and } |\xb| \leq 3/(8\pi X) \}.
        \]
\end{proposition}

\begin{remark}
	In the following proof, quantities $c_0, c_1, \ldots$ denote positive constants.
\end{remark}

\begin{proof}
By \eqref{eq:Xnlambda}, as $n\to\infty$ we have 
    \<
        \label{eq:Xasymp}
        n/X = \fc X + 2\xk \qquad\text{and}\qquad X = (n/\fc)^{1/2} - \xk/\fc + O(n^{-1/2}).	
    \>
We start by writing integral \eqref{eq:MainInt} as
    \<
        \label{eq:prcInt}
        \int_{|\xb|\leq \fr{3}{8\pi X}} \xr^{-n} \exp\!\big[ \xY(\xr e(\tf{a}{q}+\xb)) - 2\pi in(\tf{a}{q}+\xb) \big] \,d\xb.
    \>
Writing $\xr e(\xb) = \exp(X^{-1}(1-2\pi iX\xb))$, we observe that under the change of variable $t = 2\pi X\xb$, formula \eqref{eq:Psirhoalpha} states that 
    \<
        \label{eq:Psiw}
        \xY\lf(e(\tf{a}{q})e^{\fr{1}{X}(1-it)}\rh) = \fr{\fc X}{1-it} + 2\xk \log\pfrac{X}{1-it} + K + O(X^{-\fr12}).
    \>
Making said change of variable in \eqref{eq:prcInt}, the integral therein becomes 
    \[
        \fr{e(-na/q)}{2\pi X} \int_{-\fr34}^{\fr34} \exp\lf[ \xY\lf(e^{\fr{1}{X}(1-it)}\rh) + (1-it)\fr{n}{X} \rh] \,dt.	
    \]
Now letting
    \[
        \xQ(t,X) := (1-it)\fr{n}{X} + \fr{\fc X}{1-it} + 2\xk\log\pfrac{X}{1-it},
    \]
formulae \eqref{eq:Xasymp} and \eqref{eq:Psiw} imply that integral \eqref{eq:prcInt} is
    \<
        \label{eq:prcInt3}
        = e\Big(\fr{-na}{q}\Big)\fr{e^K}{2\pi X} \lf[ 1+O(n^{-\fr14}) \rh] \int_{-\fr34}^{\fr34} \exp\xQ(t,X) \,dt.
    \>

In light of \eqref{eq:prcInt3}, we focus on the integral $\int_{-\fr34}^{\fr34} \exp\xQ(t,X) \,dt$. For $|t|<\tf34$ set
    \<
        \label{eq:xDvphi}
        \Delta := (1+t^2)^{-\fr12} \qquad\text{and}\qquad \varphi := \arctan t = t - \tf13t^3 + \tf15t^5 - \cdots,
    \>
so that
    \[
        (1-it)^{-1} = \Delta e^{i\varphi} \qquad\text{and}\qquad \log\pfrac{X}{1-it} = \logX\Delta + i\varphi.
    \]
With these, now
    \<
        \label{eq:xQForm}
        \xQ(t,X) = (1-it)(\fc X+2\xk) + \fc X\Delta e^{i\varphi} + 2\xk(\logX\Delta + i\varphi).
    \>
When $|t|<\tf34$ we have $\tf45 < \Delta \leq1$ and
    \[
        \Delta^2 = (1+t^2)^{-1} \leq 1-t^2+t^4 \leq 1-\tf25t^2,	
    \]
and because $\cos\xvf = \Delta$ and $X$ is large we find that
    \<
    \label{eq:xQReal}
    \begin{aligned}
        \Re\xQ(t,X) &\leq \fc X + 2\xk + \fc X(1-\tf25t^2) + 2\xk\logX + 2\xk\log{\xD} \\
        & \leq 2\fc X + 2\xk + 2\xk\logX -\tf25\fc Xt^2 + c_0.
    \end{aligned}	
    \>
If $|t|\geq X^{-\fr12}(\logX)^{\fr32}$ as well, then
    \<
        \label{eq:xQReal2}
        -\tf25\fc Xt^2 + c_0 < -\tf25\fc (\logX)^3 + c_0 < -c_1(\logX)^3.
    \>
Since $-c_1(\logX)^3 \sim -c_2(\log n)^3$ by \eqref{eq:Xasymp}, it follows from \eqref{eq:xQReal} and \eqref{eq:xQReal2} that
    \<
        \label{eq:prcIntErr}
    \begin{aligned}
    \int_{X^{-\fr12}(\log{X})^{\fr32} \leq |t| < \fr34} \exp\xQ(t,X) \,dt 
            &\less X^{2\xk}e^{2\fc X+2\xk}\exp(-c_1(\logX)^3)
            \less X^{2\xk}e^{2\fc X+2\xk}n^{-B} 
    \end{aligned}
    \>
for any $B > 0$ and $n > n_0(B)$.

In light of \eqref{eq:prcIntErr}, for the remainder of the proof it is convenient to let
    \[
        \xd := X^{-\fr12}(\logX)^{\fr32} \qquad\text{and}\qquad Y := 2(\fc X+\xk).
    \]
It thus remains to consider the integral in \eqref{eq:prcInt3} for $|t| < \xd$.
Returning to equation \eqref{eq:xQForm}, we now express $\Delta e^{i\varphi} = (1-it)^{-1}$ as a geometric series, and write $\varphi=t-\fr13t^3+\fr15t^5-\cdots$ as in \eqref{eq:xDvphi}, so that
    \begin{align*}
        \xQ(t,X) &= (1-it)(\fc X+2\xk) + \fc X(1+it-t^2+O(|t|^3)) \\ 
        &\qquad + 2\xk\pth{\logX\Delta + it + O(|t|^3)} \\
        &= Y + 2\xk\logX - \fc Xt^2 - \xk\log(1+t^2) + O(X\xd^3) \\
        &= Y + 2\xk\logX - \tf12Y t^2 + O(t^4) + O(X\xd^3).
    \end{align*}
As $t^4 \leq \xd^4 \leq X\xd^3$ and $X\xd^3 = X^{-\fr12}(\logX)^{\fr92} \less n^{-\fr15}$, we deduce that
    \[
        \xQ(t,X) = Y + 2\xk\logX - \tf12Y t^2 + O(n^{-\fr15}) \qquad (|t|<\xd),
    \]
and thus we have
    \<
    \label{eq:NewEq}
        \int_{-\xd}^{\xd} \exp\xQ(t,x) \,dt = X^{2\xk}e^{Y}\lf[1+O(n^{-\fr15})\rh] \int_{-\xd}^\xd e^{-\tf12Y t^2} \,dt.
    \>
As $Y \asymp X \asymp \rt{n}$ and $\int_\xd^\infty \exp(-\tf12Y t^2) \,dt \less \exp(-\tf12Y \xd^2)/(Y\xd)$, we have
    \[
        \int_{-\xd}^\xd e^{-\fr12Y t^2} \,dt - \rt{\fr{2\pi}{Y}}
        \less \fr{\exp(-c_3(\logX)^3)}{X^{1/2}(\logX)^{3/2}} 
        \less \exp(-c_4(\log n)^3) 
        \less n^{-B}
    \]
for $B>0$ and $n > n_0(B)$. Since $Y = 2\rt{\fc n+\xk^2} \sim 2\rt{\fc n}$ and $B$ can be arbitrarily large, it follows from equation \eqref{eq:NewEq} that
    \<
    \label{eq:prcMain}
    \begin{aligned}
        \int_{-\xd}^\xd \exp\xQ(t,X) \,dt &= X^{2\xk}e^Y\lf[1+O(n^{-\fr15})\rh] \bigg[ \rt{\fr{2\pi}{Y}} + O(n^{-B}) \bigg] \\
        &= X^{2\xk}e^{Y}\rt{\fr{2\pi}{Y}}\lf[1+O(n^{-\fr15})\rh]. 
    \end{aligned}
    \>

Using \eqref{eq:prcIntErr} and \eqref{eq:prcMain} in \eqref{eq:prcInt3} then, and again using the facts that $Y \sim 2\rt{\fc n}$ and that $B>0$ can be arbitrary, we deduce that
    \[
        e\Big(\fr{-na}{q}\Big) \fr{e^K}{2\pi X} \lf[ 1+O(n^{-\fr14}) \rh] \int_{-\fr34}^{\fr34} \exp\xQ(t,X) \,dt 
        = e\Big(\fr{-na}{q}\Big) e^{K} \fr{X^{2\xk-1}e^{Y}}{\rt{2\pi Y}} \lf[ 1 + O(n^{-\fr15}) \rh].
    \]
Finally, using the asymptotic relations
    \[
        X = \rt{n/\fc} - \xk/\fc + O(n^{-\nfr12}) \qquad\text{and}\qquad Y = 2\rt{\fc n} + O(n^{-1}),
    \]
we find that
    \[
        e\Big(\fr{-na}{q}\Big) e^{K}\fr{X^{2\xk-1}e^{Y}}{\rt{2\pi Y}} \lf[ 1 + O(n^{-\fr15}) \rh] 
        = e\Big(\fr{-na}{q}\Big) \pfr{n}{\fc}^{\xk}e^{K} \fr{\fc^{\fr14}e^{2\rt{\fc n}}}{(4\pi)^{\fr12}n^{\fr34}} \lf[ 1 + O(n^{-\fr15}) \rh],
    \]
completing the proof of the proposition.
\end{proof}

\section{A general asymptotic formula}
\label{sec:Asymptotics}

After briefly recalling the pertinent results and constants from the preceding sections, we now apply our results to derive a general asymptotic formula for $\fp(n,\xc_p)$, again excluding the cases of $p=2$ and $p=5$. Specifically, by separating primes by their residues (mod $4$) and by the values of $\xc_p(2)$, which is equivalent to considering the residues $p\mmod{8}$, we can simplify the constants in our asymptotic results.

We first recall the constants 
    \<
        \label{eq:kappaC}
        \xk = \pi\rt{\tf23} \qquad\text{and}\qquad \fc = \tfr{\pi^2}{24}(1-\tfr1p),
    \>
along with Hardy and Ramanujan's ``benchmark'' formula
    \[
        \fp(n,1) \sim (4\rt{3})^{-1}n^{-1}\exp\lf(\xk\rt{n}\rh)\lf[1+O(n^{-\fr15})\rh].
    \]
Again writing
    \[
        L_r(\xc) = L(r,\xc) \qquad\text{and}\qquad L_r'(\xc) = L'(r,\xc) \qquad (\text{for $r=0,1$}),
    \]
Lemmata \ref{lem:prcPsi} and \ref{lem:prcPsi*} establish that
    \begin{align}
        \label{eq:PsiPsi*Recall}
        \xY(\xr) &= \fc X + 2\xl\logX + \xL + O(X^{-\fr12}) \qquad (X\to\infty), \\
        \xY(-\xr_*) &= \fc X_* + 2\xl_*\log{X_*} + \xL_* + O(X_*^{-\fr12}) \qquad (X_*\to\infty),
    \end{align}
respectively, where 
    \begin{alignat}{3}
        \label{eq:xlrecall}
        \xl\hphantom{{}_*} &= \tf14L_0(\xc),\qquad &&\xL &&= \tf12\lf[ L_0'(\xc) + L_0(\xc)\log{2} \rh] - \tf14\log{p}, \\
        \label{eq:xl*recall}
        \xl_* &= \tf14(2\xc(2)-1)L_0(\xc), \qquad &&\xL_* &&= \tf12\lf[ (2\xc(2)-1) L_0'(\xc) - L_0(\xc)\log{2} \rh] - \tf14\log{p}.
    \end{alignat}
Again defining
    \<
        \label{eq:XX*Recall}
        X = (\rt{\fc n+\xl^2}-\xl)/\fc \qquad\text{and}\qquad X_* = (\rt{\fc n+\xl_*^2}-\xl_*)/\fc,
    \>
Proposition \ref{prop:pncReduc} establishes that
    \<
        \label{eq:pncReducedRecall}
        \fp(n,\xc) = \xr^{-n} \int_{\fP} \Phi(\xr e(\xa)) e(-n\xa) \, d\xa + \xr_*^{-n} \int_{\fP_*} \Phi(\xr_*e(\xa)) e(-n\xa) \,d\xa + O(e^{(2-\xd)\rt{\fc n}})
    \>
for some $\xd>0$. Using \eqref{eq:PsiPsi*Recall}--\eqref{eq:XX*Recall} then, Proposition \ref{prop:MainInt} provides asymptotic formulae for the integrals over $\fP$ and $\fP_*$ in \eqref{eq:pncReducedRecall}; in particular, Proposition \ref{prop:MainInt} implies that 
    \<
    \label{eq:pCrude}
    \begin{aligned}
        \fp(n,\xc) 
        &= \pfr{n}{\fc}^\xl e^{\xL} \fr{\fc^{\fr14}e^{2\rt{\fc n}}}{(4\pi)^{\fr12}n^{\fr34}} \lf[1+O(n^{-\fr15})\rh] + (-1)^n \pfr{n}{\fc}^{\xl_*} e^{\xL_*} \fr{\fc^{\fr14}e^{2\rt{\fc n}}}{(4\pi)^{\fr12}n^{\fr34}} \lf[1+O(n^{-\fr15})\rh] \\
            &\qquad + O(e^{(2-\xd)\rt{\fc n}}).
    \end{aligned}
    \>
Because the first term on the righthand side here is $\asymp n^{\xl-3/4}\exp(2\rt{\fc n})$, we absorb the error term $O(e^{(2-\xd)\rt{\fc n}})$ into this first term and collect constants to write
    \<
        \label{eq:GenAsymptoticCrude}
        \fp(n,\xc) = \fa_p n^{-\fr34} \exp(2\rt{\fc n}) \lf[n^\xl(1+O(n^{-\fr15})) + (-1)^n\fb_p n^{\xl_*}(1+O(n^{-\fr15})) \rh],
    \>
where
    \[
        \fa_p := (4\pi)^{-\fr12}\fc^{\fr14-\xl}e^{\xL} \qquad\text{and}\qquad \fb_p := \fc^{\xl-\xl_*}e^{\xL_*-\xL}.
    \]
For the moment, we assume that $L_0(\xc_{p}) \geq 0$ for all $p$. Then, using \eqref{eq:xlrecall} and \eqref{eq:xl*recall}, we have
    \<
        \label{eq:lambdaIneq}
        \xl_* - \xl = \tf12(\xc(2)-1)L_0(\xc) \leq 0,
    \>
so we further extract a factor of $n^\xl$ from the square bracketed term in \eqref{eq:GenAsymptoticCrude}, and rewrite
    \[
        2\rt{\fc n} = 2\rt{\tf{\pi^2}{24}(1-\tf1p)n} = \tf12\xk\rt{(1-\tf1p)n} \qquad \Big(\xk=\pi\rt{\tf23}\,\Big),
    \] 
to finally establish the asymptotic formula
    \<
        \label{eq:GenAsymptotic}
        \fp(n,\xc) = \fa_p n^{\xl-\fr34} \exp\lf(\tf12\xk\rt{(1-\tf1p)n}\,\rh) \lf[ 1 + (-1)^n \fb_p n^{\xl_*-\xl} + O(n^{-\fr15}) \rh],
    \>
where again
    \[
        \fa_p = (4\pi)^{-\fr12}\fc^{\fr14-\xl}e^{\xL} \qquad\text{and}\qquad \fb_p = \fc^{\xl-\xl_*}e^{\xL_*-\xL}.
    \]

In order to ``unpack'' the formulae for $\xl$, $\xL$, etc., we record several basic facts regarding Dirichlet $L$-functions. Using the well-known the functional equation  
    \[
        L(s,\xc) = \fr{G(\xc)}{i^\xn \pi} \lf(\fr{2\pi}{p}\rh)^{s} L(1-s,\bar{\xc}) \xG(1-s) \sin \tf{\pi}{2}(s+\xn), \quad \text{where} \quad \xn = \begin{cases} 0 & p \equiv  1 \mod4 \\ 1 & p \equiv  3 \mod 4 \end{cases},
    \]
see, e.g., \cite{montgomery2007multiplicative}*{Cor.\ 10.9}, one may quickly check that 
    \<
        \label{eq:L0}
        L_0(\xc) = \begin{cases}
            0 & p\equiv 1\mmod4, \\
            \tf{1}{\pi}\rt{p}L_1(\xc) & p\equiv 3\mmod4,
        \end{cases}	
    \>
and
    \<
        \label{eq:L0p1}
        L_0'(\xc) = \begin{cases}
            \tf12\rt{p}L_1(\xc) & p \equiv  1 \mmod{4}, \\
            \tfr{1}{\pi}\rt{p}L_1(\xc)\lf(\xg + \log(\fr{2\pi}{p}) - \fr{L_1'(\xc)}{L_1(\xc)}\rh) & p \equiv  3 \mmod{4},
        \end{cases}
    \>
where $\xg$ is the Euler-Mascheroni constant. Finally, one has
    \<
        \label{eq:L1pos}
        L_1(\xc_p) > 0
    \>
for all $p$, and indeed this inequality holds for all nonprincipal quadratic characters; see, e.g., \cite{montgomery2007multiplicative}*{p.\ 124}. We note that by \eqref{eq:L0} and \eqref{eq:L1pos}, our assumption that $L_0(\xc) \geq 0$ in \eqref{eq:lambdaIneq} is validated.

\section{Formulae for different residues modulo 8}
\label{sec:AsymptoticsMod8}

When $p\equiv 1\mmod{4}$, many of our computations and formulae are greatly simplified by the fact that $L_0(\xc) = 0$, since in this case $\xl = \xl_* = 0$, and thus formula \eqref{eq:GenAsymptotic} reduces to
    \<
        \label{eq:P14Crude}
        \fp(n,\xc) = \fa_p n^{-\fr34} \exp\!\Big(\tf12\xk\rt{(1-\tf1p)n}\,\Big) \lf[1 + (-1)^n\fb_p +O(n^{-\fr15}) \rh]
    \>
where
    \[
        \xk=\pi\rt{\tf23},\quad \fa_p = \ptfr{p-1}{384\,p}^{\fr14}e^{\xL}, \quad\text{and}\quad \fb_p = e^{\xL_*-\xL}.
    \]
From \eqref{eq:xlrecall} and \eqref{eq:L0p1} we have
    \[
        \xL = \tf12L_0'(\xc) - \tf14\log{p} = \tf14\rt{p}L_1(\xc) - \tf14\log{p},
    \]
so that, regardless of the value of $\xc(2)$, the constant $\fa_p$ has formula
    \<
        \label{eq:P14fa}
        \fa_p = \ptfr{p-1}{384\,p^2}^{\fr14}\exp(\tf14\rt{p}L_1(\xc)).
    \>
By \eqref{eq:xl*recall} and \eqref{eq:L0p1} we have 
    \[
        \xL_*-\xL = (\xc(2)-1)L_0'(\xc) = \tf12(\xc(2)-1)\rt{p}L_1(\xc),
    \]
and it follows that
    \<
        \label{eq:P14fb}
        \fb_p = e^{\xL_*-\xL} = \begin{cases}
            1 & p\equiv 1\mmod{8},\\
            \exp(-\rt{p}L_1(\xc)) & p\equiv 5\mmod{8}.
        \end{cases}
    \>
Thus, the formulae of Theorem \ref{thm:P14} are established.

\subsection{Formulae for \tops{$p \equiv  3\mmod{4}$}{p=3(mod 4)}}

When $p\equiv 3\mmod4$ our formulae become noticeably more complicated, especially with respect to the values of $\fa_p$ and $\fb_p$. Our asymptotic formulae when $p\equiv 3\mmod{4}$ and $\xc(2)=+1$, i.e., when $p\equiv 7\mmod{8}$, are somewhat simpler than those for $p\equiv 3\mmod{8}$, so we consider the former case first. 

When $p\equiv 7\mmod{8}$, from \eqref{eq:xlrecall}, \eqref{eq:xl*recall}, and \eqref{eq:L0} we have
    \[
        \xl = \xl_* = \tf14L_0(\xc) = \rt{p}L_1(\xc)/4\pi.
    \]
In this case then, we have
    \[
        \fp(n,\xc_p) = \fa_p n^{\rt{p}L_1(\xc)/4\pi-3/4} \exp\lf(\tf12\xk\rt{(1-\tf1p)n}\,\rh) \lf[ 1 + (-1)^n \fb_p + O(n^{-\nfr15}) \rh],
    \]
with
    \[
        \xk=\pi\rt{2/3}, \qquad \fb_p = e^{\xL_*-\xL} = e^{-L_0(\xc)\log{2}} = 2^{-\rt{p}L_1(\xc)/\pi},
    \]
and $\fa_p$ kept (for the moment) as
    \<
        \label{eq:fa34Raw}
        \fa_p = \ptfr{p-1}{384\,p}^{\fr14} \fc^{-\xl}e^{\xL}.
    \>
Now recalling \eqref{eq:L0} and \eqref{eq:L0p1}, we have
    \<
        \label{eq:eLambda34}
    \begin{aligned}
        \fc^{-\xl}e^{\xL} &= \exp\!\Big( \tfr12L_0'(\xc)+\tfr12L_0(\xc)\log{2}-\tfr14\log{p} -\tf12L_0(\xc)\log(\rt \fc) \Big) \\
        &= p^{-\fr14} \exp\lf( \fr{\rt{p}L_1(\xc)}{2\pi}\lf( \xg + \log\pfr{4\pi}{p\rt{\fc}} - \fr{L_1'(\xc)}{L_1(\xc)} \rh)\!\rh) \\
        &= p^{-\fr14} \exp\!\bigg( \fr{\rt{p}L_1(\xc)}{2\pi}\bigg( \xg + \fr12\log\!\bigg(\fr{384}{p-1}\bigg) - \fr{L_1'(\xc)}{L_1(\xc)} \bigg)\!\bigg).
    \end{aligned}
    \>
Combining \eqref{eq:fa34Raw} and \eqref{eq:eLambda34} then, we conclude that
    \<
        \label{eq:P78Form}
        \fp(n,\xc) = \fa_p n^{\rt{p}L_1(\xc)/4\pi-3/4} \exp\lf(\tf12\xk\rt{(1-\tf1p)n}\,\rh) \lf[ 1 + (-1)^n \fb_p + O(n^{-\fr15}) \rh]
    \>
with
    \<
        \label{eq:P78fa}
        \fa_p = \pfr{p-1}{384\,p^2}^{\fr14} \exp\!\bigg( \fr{\rt{p}L_1(\xc)}{2\pi}\bigg( \xg + \fr12\log\!\bigg(\fr{384}{p-1}\bigg) - \fr{L_1'(\xc)}{L_1(\xc)} \bigg)\!\bigg)
    \>
and
    \[
        \fb_p = 2^{-\rt{p}L_1(\xc)/\pi}.
    \]
This proves the first part of Theorem \ref{thm:P34}.

Finally we consider primes $p$ such that $p\equiv 3\mmod{4}$ and $\xc(2)=-1$, or equivalently primes such that $p\equiv 3\mmod{8}$. Now we find that
    \<
        \label{eq:38lambdadiff}
        \xl-\xl_* = \tf14L_0(\xc) + \tf34L_0(\xc) = \rt{p}L_1(\xc)/\pi
    \>
and
    \[
        \xL_*-\xL = -2L_0'(\xc) - L_0(\xc)\log{2} = \fr{\rt{p}L_1(\xc)}{\pi}\lf(2\fr{L_1'(\xc)}{L_1(\xc)} - 2\xg - 2\log\pfr{2\pi}{p} - \log{2} \rh).
    \]
While our formula for $\fa_p$ is the same as in \eqref{eq:P78fa}, we now have
    \<
    \begin{aligned}
        \fc^{\xl-\xl_*} = \exp\lf( \fr{\rt{p}L_1(\xc)}{\pi}\log\pfr{\pi^2(1-\nfr1{p})}{24}\!\rh), 
    \end{aligned}
    \>
and thus
    \<
    \label{eq:P38fb}
    \begin{aligned}
        \fb_p = \fc^{\xl-\xl_*}e^{\xL_*-\xL} 
        &= \exp\lf[\fr{\rt{p}L_1(\xc)}{\pi}\lf(2\fr{L_1'(\xc)}{L_1(\xc)} - 2\xg + \log\pfr{p(p-1)}{192} \!\rh)\rh].
    \end{aligned}
    \>
Additionally noting that now $n^{\xl_*-\xl} = n^{-\rt{p}L_1(\xc)/\pi}$ in \eqref{eq:GenAsymptotic}, we conclude that for $p \equiv  3 \mmod{8}$ we have
    \<
        \label{eq:P38Form}
		\fp(n,\xc) = \fa_p n^{\rt{p}L_1(\xc)/4\pi-\nfr34} \exp\lf(\tf12\xk\rt{(1-\tf1p)n}\,\rh) \lf[ 1+(-1)^n \fb_p n^{-\rt{p}L_1(\xc)/\pi} + O(n^{-1/5}) \rh],
	\>
where $\fa_p$ is again given by $\eqref{eq:P78fa}$, and $\fb_p$ is given by \eqref{eq:P38fb}, completing the proof of the second part of Theorem \ref{thm:P34}.

\begin{remark}
    Recalling the definitions
        \[
            X = (\rt{\fc n+\xl^2}-\xl)/\fc \qquad\text{and}\qquad X_* = (\rt{\fc n+\xl_*^2}-\xl_*)/\fc,
        \]
    and noting that $(\rt{\fc n+t^2}-t)/\fc$ is strictly decreasing in $t$, equation \eqref{eq:38lambdadiff} implies that $X_* > X$  (and thus $\xr_* > \xr$) when $p\equiv 3\mmod{8}$. Therefore, to compute $\fp(n,\xc_p)$ in this case we use a contour of integration like that in the following picture.
\end{remark}

{
\begin{figure}[!ht]
\centering
{\scalebox{0.95}{
\begin{tikzpicture}
    \def\gap{0.2}
    \def\tzR{2.35} 
    \def\tzr{1.80} 
    \def\tzudr{3} 
    
    \draw[line width = 0.5pt] (0,0) circle (\tzudr);
    \draw[dashed, line width = 0.5pt] (0,0) circle (\tzR);
    \draw[dashed, line width = 0.5pt] (0,0) circle (\tzr);
    
    \draw [help lines,-] (-1.2*\tzudr, 0)--(1.2*\tzudr, 0);
    \draw [help lines,-] (0, -1.2*\tzudr)--(0, 1.2*\tzudr);
    
    \draw[line width=1pt,
        decoration = {markings, mark = at position 0.6 with {\arrow[line width=1.2pt]{>}}},
        postaction = {decorate}] 
        (90:\tzR) arc (90:270:\tzR);
    
    \draw[line width=1pt, 
        decoration = {markings, mark = at position 0.6 with {\arrow[line width=1.2pt]{>}}}, postaction = {decorate}] 
        (-90:\tzR) -- (-90:\tzr);
    
    \draw[line width=1pt, 
        decoration = {markings, mark = at position 0.6 with {\arrow[line width=1.2pt]{>}}}, postaction = {decorate}] 
        (-90:\tzr) arc (-90:90:\tzr);
    
    \draw[line width=1pt, 
        decoration = {markings, mark = at position 0.6 with {\arrow[line width=1.2pt]{>}}}, postaction = {decorate}] 
        (90:\tzr) -- (90:\tzR);
    
    \draw[very thin, dashed] (0,0)--(125:\tzR);
    \node[below] at (130:0.6*\tzR) {$\rho_*$};
    
    \draw[very thin, dashed] (0,0)--(345:\tzr);
    \node[below] at (345:0.6*\tzr) {$\rho$};
    
    \node[below right] at (\tzudr,0) {$1$};
    \node[below left] at (-\tzudr,0) {$-1$};
    \node[above right] at (0, \tzudr) {$i$};
    \node[below right] at (0, -\tzudr) {$-i$};
\end{tikzpicture}
}}
\caption{The contour of integration used when $p \equiv 3\mmod{8}$. }
\label{fig:P38Contour}
\end{figure}
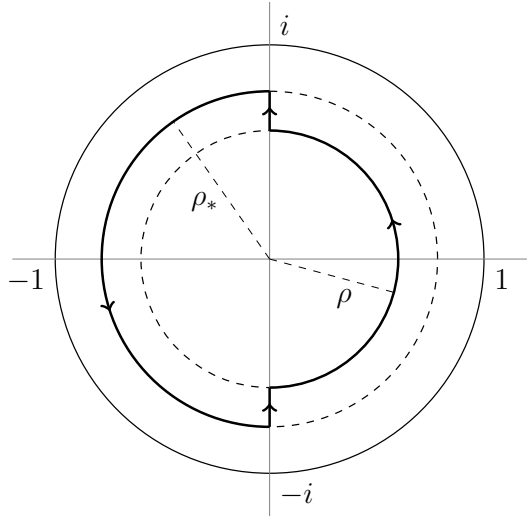
}

\section{The special case of \tops{$\fp(n,\xc_5)$}{p(n,chi5)}}
\label{sec:5}

The sequence $(\fp(n,\xc_5))_\nn$ requires special treatment because, roughly speaking, the generating function $\xF(z,\xc_5)$ has \emph{four} dominant singularities on the circle $|z|=1$ instead of the usual two at $z=\pm 1$. Specifically, $\xF(z,\xc_5)$ has dominant singularities $z = \pm 1$ and $z = e(\pm\tf15)$, as we demonstrate now.

It is immediate from Proposition \ref{prop:Major} that, as $X \to \infty$, for all odd $p$ one has
    \[
        \xY(\xr e(\tf{a}{q})) \sim V_q(a)\fc X,
    \]
where $\fc = \tf{\pi^2}{24}(1-\tf1p)$ and $V_q(a)$ is given in said proposition. We always have $V_1(1)=V_1(2)=1$, and, as seen in the proof of Lemma \ref{lem:RePsiBB}, for $p\neq 5$ with $p \equiv  1 \mmod{4}$ we have $V_q(a) < 1$ when $q > 2$ and $(a,q)=1$. This is why $\pm1$ (equivalently $e(0)$ and $e(\tf12)$) are usually the only dominant singularities of $\xF(z,\xc_p)$.

However, in the case of $p=5$ we have
    \[
        \fc = \fr{\pi^2}{30}, \quad G(\xc_5) = \rt{5}, \quad\text{and}\quad L_2(\xc_5) =\fr{4\pi^2\rt5}{125},
    \]
and thus for $q=5$ and $1\leq a\leq4$, the formula
    \[
        V_5(a) = \big[\xc_5(a)G(\xc_5)(1-\tf{\xc_5(2)}{4})L_2(\xc_5)/\fc - 1\big]\tfr{5}{25}
    \]
from \eqref{eq:Vqprime} reduces to
    \[
        V_5(a) = \tfr15(6\xc(a)-1) \qquad (1 \leq a \leq 4).
    \]
From this we see that
    \[
        V_5(1) = V_5(4) = 1 \qquad\text{and}\qquad V_5(2) = V_5(3) = -\tf75,
    \]
which implies that
    \[
        \xY(\xr) \sim \xY(-\xr) \sim \xY(\xr e(\tf15)) \sim \xY(\xr e(\tf45)) \sim \fc X,
    \]
illustrating the fact that $\xF(z,\xc_5)$ has four ``principal'' singularities.

\begin{remark}
    For brevity, going forward we identify $e(\tf45)=e(-\tf15)$ so that we may, e.g., speak of $V_5(\pm1)$, $\xY(\xr e(\pm\tf15))$, etc..
\end{remark}

Starting over with our ``crude'' formula \eqref{eq:pCrude} for $\fp(n,\xc_p)$, the additional principal arcs around $\xa = \pm\tf15$ (or formally $\xa=\tf15$ and $\xa=\tf45$) yield an asymptotic formula
    \<
    \begin{aligned}
        \label{5:eq:p5crude}
		\fp(n,\xc_5) = \fr{\fc^{\fr14}e^{2\rt{\fc n}}}{(4\pi)^{\fr12}n^{\fr34}}\lf[e^{\xL} + (-1)^n e^{\xL_*} + e^{-2\pi in/5}e^{i\xL_5(1)} + e^{2\pi in/5}e^{i\xL_5(-1)} + O(n^{-1/5}) \rh],
    \end{aligned}
    \>
where $\xL$ and $\xL_*$ have their usual values from \eqref{eq:xlrecall} and \eqref{eq:xl*recall}, and the quantities $\xL_5(\pm1)$ are constants determined below. As we ultimately find that \keep{$\xL_5(-1) = -\xL_5(1)$}, we collect constants and reduce \eqref{5:eq:p5crude} to
    \<
        \label{eq:p5crude2}
		\fp(n,\xc_5) = \fa_5 n^{-\fr34}\exp\!\Big(\tf12\xk\rt{\tf45n}\,\Big) \lf[ 1 + (-1)^n\fb_5 + \fd_5 \cos\lf(\tf{2\pi}{5} n-\xL_5(1)\rh) + O(n^{-\fr15}) \rh],
    \>
where $\xk=\pi\rt{2/3}$ as usual, where (from \eqref{eq:P14fa} and \eqref{eq:P14fb})
    \[
        \fa_5 = 480^{-\fr14}\exp(\tf14\rt{5}L_1(\xc_5)) \qquad\text{and}\qquad  \fb_5 = \exp(-\rt{5}L_1(\xc_5)),
    \]
respectively, and we now additionally define
    \[
        \fd_5 = 2\exp(-\xL) = 2\exp(-\tf14\rt{5}L_1(\xc_5)).
    \]
Computing (using, e.g., Lemma \ref{lem:MV10.1.14}) that
    \[
        \rt{5}L_1(\xc_5) = \log(\tf12(3+\rt{5})),
    \]
we find the explicit formulae
    \[
        \fa_5 = \bigg(\fr{3+\rt{5}}{960}\bigg)^{\fr14}, \quad \fb_5 = \fr{3-\rt{5}}{2}, \quad\text{and}\quad \fd_5 = \rt{2(5-\rt{5})}.
    \]
Although it remains to compute the constants $\xL_5(\pm1)$ in formula \eqref{eq:p5crude2}, said formula highlights how $\fp(n,\xc_5)$ is influenced by the 10-periodic sum in the square brackets there.

\subsection{The computation of \tops{$\xL_5(\pm1)$}{Lambda5(+-1)}}

Just as $\xL$ and $\xL_*$ were defined as the constant (i.e., order $X^0$) terms in our formulae for $\xY(\pm\xr e(\xb))$ (note $\xr=\xr_*$ here since $5\equiv 1\mmod{4}$) in Lemmata \ref{lem:prcPsi} and \ref{lem:prcPsi*}, respectively, the quantities $\xL_5(\pm1)$ are the constant terms in the asymptotic formulae for $\xY(\xr e(\pm\tf15+\xb))$. Our computations of $\xL_5(\pm1)$ are similar to the methods used in establishing Lemmata \ref{lem:prcPsi} and \ref{lem:prcPsi*}, so we are brief in our exposition. Again we use the dashed integral $\dint$ as a shorthand for $\fr{1}{2\pi i} \int$, and we now reserve $\xc_0$ for the principal character (mod 5) and reserve $\xy$ for a general character (mod 5).

Let $|\xb| \leq 3/(8\pi X)$. Recalling the definitions of $\xY_0$ and $\xY_1$ from \eqref{eq:Psi0Psi1Def}, we again write $\xt = X^{-1}(1-2\pi iX\xb)$ so that $\xr e(\xb) = e^{-\xt}$, and we first consider
    \[
        \xY_0(e(\tfr{1}{5})e^{-\xt}) 
		= \sum_{2|k} \sum_{n=1}^\infty \frac{\xc_0(n)}{k} \e{\fr{nk}{5}} e^{-nk\xt} 
		= \sum_{2|k}\sum_{m=1}^5 \sum_{n \equiv  m\mod{5}} \frac{\xc_0(n)}{k} \e{\fr{mk}{5}} e^{-nk\xt}.   
    \]
Since the only nonzero sums $\sum_{n \equiv  m \mod5}$ here are those with $(m,5)=1$, we have 
	\<
		\label{eq:PsilSum}
		\sum_{m=1}^5 \sum_{n \equiv  m\mod{5}} \xc_0(n) e\pfr{mk}{5} e^{-nk\xt}
		= \fr{1}{4} \sum_{\xy\,\mathrm{mod}\,5} \sum_{n=1}^\infty \xy(n)\xc_0(n)e^{-nk\xt} \sum_{m=1}^5 \bar{\xy}(m) e\pfr{mk}{5},
	\>
and since $\xy(n)\xc_0(n)=\xy(n)$ for any character $\xy \mmod{5}$, it follows that 
	\[
		\sum_{2|k} \sum_{n=1}^\infty \frac{\xc_0(n)}{k} \e{\fr{nk}{5}} e^{-nk\xt} = \fr{1}{4} \sum_{\xy \,\mathrm{mod}\,5}	\sum_{2|k} \sum_{n=1}^\infty \fr{G(k,\bar{\xy})}{k} \xy(n)e^{-nk\xt},
	\]
where $G(k,\bar{\xy}) = \sum_{m=1}^5 \bar{\xy}(m) e(mk/5)$.

Writing $e^{-nk\xt} = \ldint{2} \xG(s)(nk\xt)^{-s} \,ds$ and rearranging terms, it follows that
    \<
    \begin{aligned}
        \label{eq:5:Psi0ints}
        &\xY_0(e(\tf{1}{5})e^{-\xt})
        = \fr{1}{4} \sum_{\xy\,\mathrm{mod}\,5} \ldint{2} \Big( \sum_{2|k} \fr{G(k,\bar{\xy})}{k^{s+1}} \Big) L(s,\xy)\xG(s)\xt^{-s} \,ds \\
        &\qquad = \fr14 \sum_{\xy\,\mathrm{mod}\,5} \sum_{\xn=1}^5 G(2\xn,\bar{\xy}) \ldint{2} 10^{-s} \xz(s+1,\tf{\xn}{5}) L(s,\xy) \xG(s)\xt^{-s} \,ds,
    \end{aligned}
    \>
where $\xz(z,r)$ is the Hurwitz zeta function. Akin to the arguments of section \ref{sec:princ}, using \eqref{eq:5:Psi0ints} and shifting the line of integration to $\Re(s)=-\fr12$, and bounding the resulting integral yields the formula
    \<
        \label{eq:5:Psi0Ints2}
        \xY_0(e(\tfr{1}{5})e^{-\xt}) = - \tfr{1}{5}\fc \xt^{-1} + \tfr{1}{4}\sum_{\substack{\xy\,\mathrm{mod}\,5 \\ \xy \neq \xc_0}} G(\bar{\xy})\tf{\xy(2)}{2}L_0(\xy)L_1(\xy) +  O(|\xt|^{\fr12}) \qquad (\fc=\tfr{\pi^2}{30}).
    \>
Using, e.g., Lemma \ref{lem:MV10.1.14} one may quickly reduce \eqref{eq:5:Psi0Ints2} to
    \begin{align}
        \label{5:eq:Psi0Form}
        \xY_0(e(\tfr{1}{5})e^{-\xt}) &= - \tf15\fc \xt^{-1} + 0 +  O(|\xt|^{\fr12}).
    \end{align}

Now turning to $\xY_1(\xr e(\fr15+\xb))$, in place of \eqref{eq:PsilSum} we have
	\[
		\sum_{2 \ssnmid k} \sum_{n=1}^\infty \frac{\xc_5(n)}{k} \e{\fr{nk}{5}} e^{-nk\xt} = \fr{1}{4} \sum_{\xy\,\mathrm{mod}\,5} \sum_{2 \ssnmid k} \sum_{n=1}^\infty \fr{G(k,\bar{\xy})}{k} \xc_5(n)\xy(n)e^{-nk\xt},	
	\]
and in place of \eqref{eq:5:Psi0ints} we have 
	\<
        \label{eq:5:Psi1Int}
		\xY_1(e(\tf{1}{5})e^{-\xt})
		= \fr{1}{4} \sum_{\xy\,\mathrm{mod}\,5} \ldint{2} \Big( \sum_{2 \ssnmid k} \fr{G(k,\bar{\xy})}{k^{s+1}} \Big) L(s,\xc_5\xy)\xG(s)\xt^{-s} \,ds.
	\>
Rather than write \eqref{eq:5:Psi1Int} using $\xz(z,r)$ like in \eqref{eq:5:Psi0ints}, it is perhaps simpler, and is indeed more illustrative, to compute that 
    \[
        \sum_{2 \ssnmid k} \fr{G(k,\bar{\xy})}{k^{s+1}} = \begin{cases}
            (1-2^{-s-1})(5^{-s}-1)\xz(s+1) & \xy = \xc_0, \\
            G(\bar{\xy})(1-\tf{\xy(2)}{2^{s+1}})L(s+1,\xy) & \xy \neq \xc_0
        \end{cases}
    \]
and use this to more closely examine the integrals in \eqref{eq:5:Psi1Int}.

This time, because of the factor $L(s,\xc_5\xy)$ in \eqref{eq:5:Psi1Int}, our main term there comes from the term with $\xy=\xc_5$, since $L(s,\xc_5^2) = L(s,\xc_0) = (1-5^{-s})\xz(s)$. Thus, writing the summands in \eqref{eq:5:Psi1Int} over $\xy \mmod{5}$ with $\xy$ in the order $\xc_5$, then $\xc_0$, and then the remaining characters (mod $5$), we have
	\<
    \label{eq:5:Psi1Int2}
	\begin{aligned}
		&\xY_1(e(\tfr{1}{5})e^{-\xt}) 
        = \fr{1}{4} \ldint{2} G(\xc_5)\big(1-\tf{\xc_5(2)}{2^{s+1}}\big)(1-5^{-s})L(s+1,\xc_5)\xz(s)\xG(s)\xt^{-s} \,ds \\
		& \qquad\qquad - \fr14 \ldint{2} (1-2^{-s-1})(1-5^{-s})\xz(s+1)L(s,\xc_5)\xG(s)\xt^{-s} \,ds \\
		& \qquad\qquad + \fr{1}{4}\sum_{\substack{\xy\,\mathrm{mod}\,5 \\ \xy \neq \xc_0,\xc_5}} G(\bar{\xy}) \ldint{2} \big(1-\tfr{\xy(2)}{2^{s+1}}\big)L(s+1,\xy)L(s,\xc_5\xy)\xG(s)\xt^{-s} \,ds.    
	\end{aligned}    
	\>

At $s=1$ the first integrand here has residue 
    \<
        \label{eq:5:Psi1Res1}
        \tf15G(\xc_5)(1-\tf{\xc_5(2)}{4})L_2(\xc_5)\xt^{-1} = \tfr{\pi^2}{25}\xt^{-1} = \tf65\fc \xt^{-1},
    \>
and at $s=0$ it has residue 0 due to the factor $(1-5^{-s})\xG(s)$. We remark that the leftmost expression in \eqref{eq:5:Psi1Res1} is consistent with formula \eqref{eq:Vqprime}. The second integrand in \eqref{eq:5:Psi1Int2} has residue $-\tf18L_0(\xc_5)\log{5}$ at $s=0$, but this is 0 since $L_0(\xc_5)=0$.

For the remaining integrals in \eqref{eq:5:Psi1Int2}, there are only two characters $\xy\mmod{5}$ different from $\xc_0$ and $\xc_5$, say $\xo$ and $\bar{\xo}$, where $\xo(2)=i$. It is again straightforward to compute that these two integrals together provide a residue at $s=0$ of
    \[
        \tf14G(\bar{\xo})(1-\tf{i}{2})L_0(\bar{\xo})L_1(\xo) + \tf14G(\xo)(1+\tf{i}{2})L_0(\xo)L_1(\bar{\xo}) = \tfr{i\pi}{10},
    \]
so that, in total, from \eqref{eq:5:Psi1Int2} we find that
    \<
        \label{5:eq:Psi1Form}
		\xY_1(e(\tfr{1}{5})e^{-\xt}) = \tfr65\fc \xt^{-1} + \tfr{i\pi}{10} + O(|\xt|^{\fr12}).
	\>

Adding \eqref{5:eq:Psi0Form} and \eqref{5:eq:Psi1Form} and replacing $\xt^{-1}$ with $X/(1-2\pi iX\xb)$, we have
	\[
		\xY(\xr e(\tf15+\xb)) = \fr{\fc X}{1-2\pi iX\xb} + \fr{i\pi}{10} + O(X^{-\fr12}),
	\]
and we thus deduce that
    \[
        \xL_5(1) = \tfr{\pi}{10}.
    \]
A similar computation provides essentially the same formula for $\xY(\xr e(-\tf15+\xb))$, with the only difference being the replacement of $i\pi/10$ with $-i\pi/10$, so that indeed
    \[
        \xL_5(-1) = -\xL_5(1) = -\tfr{\pi}{10}.
    \]

Having computed that $\xL_5(\pm1) = \pm\fr{\pi}{10}$, we return to \eqref{eq:p5crude2} and conclude that
    \<
        \label{eq:p5Form}
		\fp(n,\xc_5) = \fa_5 n^{-\fr34}\exp\!\Big(\tf12\xk\rt{\tf45n}\,\Big) \lf[ 1 + (-1)^n\fb_5 + \fd_5 \cos\lf(\tf{2\pi}{5}n-\tf{\pi}{10}\rh) + O(n^{-\fr15}) \rh],
    \>
where we recall that
    \[
        \xk = \pi\rt{\tf23}, \quad \fa_5 = \bigg(\fr{3+\rt{5}}{960}\bigg)^{\fr14}, \quad \fb_5 = \fr{3-\rt{5}}{2}, \quad\text{and}\quad \fd_5 = \rt{2(5-\rt{5})}.
    \]
This completes the proof of Theorem \ref{thm:P5}.

\section{The formula for \tops{$\fp(n,\xc_2)$}{p(n,chi2)}}
\label{sec:Kron}

We now consider this paper's final case, recalling that $\xc_2(n)$ is the Kronecker symbol $(\fr{n}{2})$ defined via
    \<
        \label{eq:2:Kronecker}
        \xc_2(n) = \begin{cases}
            1 & n \equiv  \pm1\mmod{8}, \\
            -1 & n\equiv  \pm3\mmod{8}, \\
            0 & \text{otherwise}.
        \end{cases}
    \>
Because $\xc_2$ is multiplicative and clearly $|\xc_2|\leq1$, Theorem \ref{thm:Minor} precludes the need for any minor arc analysis of $\xY(\xr e(\xa),\xc_2)$. Arguing as done in section \ref{sec:major}, for $\xa = a/q + \xb$ with $(a,q)=1$ and $|\xb| \leq 1/(qX^{2/3})$, we have
    \[
        \xY(\xr e(\xa),\xc_2) = \lf(\fr{X}{1-2\pi iX\xb}\rh)\sum_{k=1}^\infty \fr{U_k(q,a)}{k^2} + O(X^{\fr89}),
    \]
where now, because $\xc_2$ is a character modulo $8$, we define
    \<
        \label{eq:2:Uk}
        U_k(q,a) = \fr{1}{[8,q]}\sum_{m=1}^{[8,q]} \xc_2^k(m) e\lf(\fr{mka}{q}\rh) \qquad ([8,q]=\mathrm{lcm}(8,q)).
    \>

Since our prime of interest is $2$ here, we define (cf.\ equation \eqref{eq:cDef})
    \[
        \fc = \fr{\pi^2}{24}\lf(1-\fr{1}{2}\rh) = \fr{\pi^2}{48} \qquad \text{and}\qquad V_q(a) = \fr{1}{\fc} \sum_{k=1}^\infty \fr{U_k(q,a)}{k^2},
    \]
using formula \eqref{eq:2:Uk} for $U_k(q,a)$. Following computations akin to those done for Proposition \ref{prop:Major} (see the appendix), we derive the following analogue of said proposition. We recall that for any prime $\l$ and any $m>0$, one writes $\l^m \|n$ if one has $\l^m \mid n$ but $\l^{m+1} \nmid n$.

\begin{lemma}
    \label{lem:2:Vq}
    Let $\xc_2$ be defined as in \eqref{eq:2:Kronecker}, let $\fc = \fr{\pi^2}{48}$, and let $X > X_0$. For coprime $a$ and $q$ with $0 \leq a \leq q \leq X^{1/3}$, there exist constants $V_q(a)$ such that: for all $\xa = a/q + \xb$ with $|\xb| \leq 1/(qX^{2/3})$, as $X \to \infty$ one has
    \[
        \xY(\xr e(\xa),\xc_2) = \fr{V_q(a)\fc X}{1-2\pi iX\xb} + O(X^{\fr89}),
    \]
    where $V_q(a)$ is determined as follows. If $2^3\|q$ then {\normalfont (cf.\ \eqref{eq:Vqprime})}
        \<
            \label{eq:2:Vq8}
            V_q(a) = \big[ \xc_2(a)G(\xc_2)L_2(\xc_2)/\fc - 1\big]\fr{8}{q^2},
        \>
    and otherwise
        \<
            \label{eq:8:VqCases}
            V_q(a) = \begin{cases}
                1/q^2 & 2\nmid q, \\
                4/q^2 & 2^1\|q, \\
                -8/q^2 & 2^2\|q, \\
                -8/q^2 & 2^4|q.
            \end{cases}
        \>
\end{lemma}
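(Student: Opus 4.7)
The plan is to mirror the derivation of Proposition \ref{prop:Major} in section \ref{sec:major}, with the conductor $8$ of $\chi_2$ playing the role of the prime $p$. Starting from the decomposition $\Psi(z,\chi_2) = \sum_{k=1}^\infty k^{-1}F_k(z^k)$ and writing $\alpha = a/q + \beta$ with $(a,q)=1$, $q\leq X^{1/3}$, and $|\beta|\leq 1/(qX^{2/3})$, the same geometric-series collapse, completion of the $k$-sum, and error bookkeeping used in section \ref{sec:major} apply verbatim once $[p,q]$ is replaced throughout by $[8,q]$. This yields
\begin{equation*}
    \Psi(\rho e(\alpha),\chi_2) = \frac{X}{1-2\pi i X\beta}\sum_{k=1}^\infty \frac{U_k(q,a)}{k^2} + O(X^{8/9}),
\end{equation*}
with $U_k(q,a)$ as in \eqref{eq:2:Uk}. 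Pulling out the factor $\mathfrak{c} = \pi^2/48$ reduces the lemma to the explicit evaluation of $V_q(a) = \mathfrak{c}^{-1}\sum_k U_k(q,a)/k^2$.

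To evaluate $V_q(a)$, I would split the sum on $k$ by parity, exploiting $\chi_2^{2j} = \chi_0$ (the principal character modulo $8$) and $\chi_2^{2j+1} = \chi_2$. Interchanging sums, which is permissible by absolute convergence, gives
\begin{equation*}
    \mathfrak{c}\,V_q(a) = \frac{1}{[8,q]}\sum_{m=1}^{[8,q]}\chi_0(m)\sum_{j=1}^\infty \frac{e(2jma/q)}{(2j)^2} + \frac{1}{[8,q]}\sum_{m=1}^{[8,q]}\chi_2(m)\sum_{2\nmid k}\frac{e(mka/q)}{k^2}.
\end{equation*}
The inner sums in $j$ and $k$ have standard closed forms (Bernoulli/Clausen type), and after folding these back into the $m$-sum, the first term reduces to a Ramanujan-type sum supported on odd residues while the second becomes a $\chi_2$-twisted Gauss sum scaled by the Dirichlet value $L_2(\chi_2)$ (via the odd-indexed restriction of $\zeta(2)$ and $L_2(\chi_2)=\sum_{2\nmid k}\chi_2(k)/k^2$).

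The final stage is case analysis on the $2$-adic valuation of $q$, with $[8,q]$ taking the values $8q,\ 4q,\ 2q,\ q$ in the cases $v_2(q)=0,1,2,$ and $\geq 3$, respectively. Whenever $v_2(q) \neq 3$, the conductor $8$ of $\chi_2$ is mismatched with the $2$-part of $a/q$: orthogonality modulo $8$ annihilates the $\chi_2$-twisted inner sum, leaving only the $\chi_0$ contribution, which after unwinding the $[8,q]$ normalization yields the four closed forms $1/q^2,\ 4/q^2,\ -8/q^2,\ -8/q^2$ recorded in \eqref{eq:8:VqCases}. In the aligned case $v_2(q)=3$, i.e., $q=8q'$ with $q'$ odd, the twist is perfect: pulling out $\chi_2(a)$ lets the odd-$k$ inner sum factor as $\chi_2(a)G(\chi_2)$ times a Ramanujan sum modulo $q'$, and the outer series contributes $L_2(\chi_2)$, producing the bracketed expression in \eqref{eq:2:Vq8}. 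The main obstacle I anticipate is the bookkeeping that unifies the subcases $v_2(q)=2$ and $v_2(q)\geq 4$, which yield identical closed forms $-8/q^2$ but via structurally distinct routes (interference inside the $\chi_2$ twist versus oversupply of $2$-adic structure in $q$ that is absorbed entirely by $[8,q]=q$); verifying that the two collapse to the same prefactor demands careful tracking of how much of $v_2(q)$ is absorbed into the $[8,q]$ normalization versus how much remains visible in $e(mka/q)$.
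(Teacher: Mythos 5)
Your outline is correct and follows the route the paper intends: the section~\ref{sec:major} argument applies verbatim with $[p,q]$ replaced by $[8,q]$, reducing the lemma to evaluating $V_q(a)$, and the evaluation proceeds by splitting on the parity of $k$ and on $v_2(q)$ just as in the appendix computations for Proposition~\ref{prop:Major}. The one unnecessary detour is the invocation of Bernoulli/Clausen closed forms: if you mirror Lemma~\ref{lem:UkDiv} and first compute $U_k(q,a)$ for each fixed $k$ via the substitution $m = t\cdot 8 + s$ (or $m = t\,[8,q]/8 + s$, as appropriate to the case) and orthogonality of the $t$-sum, you find $U_k(q,a)$ vanishes unless $q/\gcd(8,q)$ divides $k$, and the residual $k$-sum then collapses directly to $\zeta(2)$ and $L_2(\chi_2)$ with no need for periodic Bernoulli identities. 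Your diagnosis of the $v_2(q)=2$ versus $v_2(q)\geq 4$ subtlety is exactly right: in the former case $[8,q]=2q$ and the odd-$k$ sum dies by the same cancellation seen at $q=4$, while in the latter $[8,q]=q$ and the sum dies because $\chi_2$ is imprimitive modulo $q$; that both nonetheless give $-8/q^2$ (as can be checked, e.g., $V_4(1)=-1/2=-8/16$ and $V_{16}(1)=-1/32=-8/256$) is one of the small miracles the bookkeeping has to deliver, and your proposal correctly anticipates that this is where the care is required.
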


\begin{remark}
    Because $(1-\tf{\xc_2(2)}{4})=1$, if we momentarily relax the restriction that $p$ only denote an odd prime and let $p=2$, equation \eqref{eq:2:Vq8} may be written as
    \[
        V_q(a) = \lf[ \xc_2(a)G(\xc_2)(1-\tf{\xc_2(2)}{4})L_2(\xc_2)/\fc - 1\rh]\fr{4p}{q^2},
    \]
and the quantities $-8/q^2$ in \eqref{eq:8:VqCases} may be written as $-4p/q^2$, further illustrating the similarities between Lemma \ref{lem:2:Vq} and Proposition \ref{prop:Major}. 
\end{remark}

As quick computations show that
    \[
        G(\xc_2) = 2\rt{2} \qquad\text{and}\qquad L_2(\xc_2) = \fr{\pi^2\rt{2}}{16},
    \]
we find that
    \[
        V_1(1)=V_2(1) = 1 \qquad\text{and}\qquad V_8(1) = V_8(7) =  \tf{11}{8},
    \]
showing that the major arcs about $\xa=\tf18$ and $\xa=\tf78$ in fact yield higher-order asymptotic terms than do the arcs about $0$, $1$, and $\tf12$. Maintaining that $\fc=\tf{\pi^2}{48}$, computations like those done in sections \ref{sec:princ} and \ref{sec:5} show that
    \<
        \label{eq:2:Psi18}
        \xY(\xr e(\pm\tf18+\xb),\xc_2) = \fr{\fr{11}{8}\fc X}{1-2\pi iX\xb} \pm \fr{3\pi i}{16} + O(X^{-\fr12})
    \>
and
    \<
        \label{eq:2:Psipm1}
        \xY(\pm\xr e(\xb),\xc_2) = \fr{\fc X}{1-2\pi iX\xb} + \lf(-\tf14\log{2} \pm \tf12\log(1+\rt{2})\rh) + O(X^{-\fr12}).
    \>

In section \ref{sec:relations}, the formulae of the form
    \<
        \label{eq:2:PsiPrcGen}
        \xY(\xr e(\xb),\xc_p) = \fr{\fc X}{1-2\pi iX\xb} + 2\xl\log\pfr{X}{1-2\pi iX\xb} + \xL + O(X^{-\fr12})
    \>
led us to define $X$ (and therefore the radius $\xr=e^{-1/X}$) via
    \[
        X = (\rt{\fc n+\xl^2}-\xl)/\fc,
    \]
and similarly for $X_*$. Comparing \eqref{eq:2:Psipm1} and \eqref{eq:2:PsiPrcGen}, we see that in this case $\xl=\xl_*=0$, so that for $\fp(n,\xc_2)$ we set
    \<
        \label{eq:2:XX*}
        X = X_* = \rt{n/\fc}.
    \>
On the other hand, comparing \eqref{eq:2:Psi18} and \eqref{eq:2:PsiPrcGen} we deduce that we should define
    \<
        \label{eq:2:X8}
        X_8 = \rt{8n/(11\fc)} \qquad\text{and}\qquad \xr_8 = e^{-1/X_8},
    \>
and use $\xr$ and $\xr_8 $ for an ``arc-transference contour'' for
    \[
        \fp(n,\xc_2) = \fr{1}{2\pi i} \int_{|z|=r} \xF(z,\xc_2) z^{-n-1} \,dz \qquad (0<r<1).
    \]
This time, our modified contour has the form shown in Figure \ref{fig:KronContour}, cf.\ Figure \ref{fig:P38Contour}. We remark that the inequality $\xr_8 < \xr$ implied by \eqref{eq:2:XX*} and \eqref{eq:2:X8} is reflected in the figure.

{
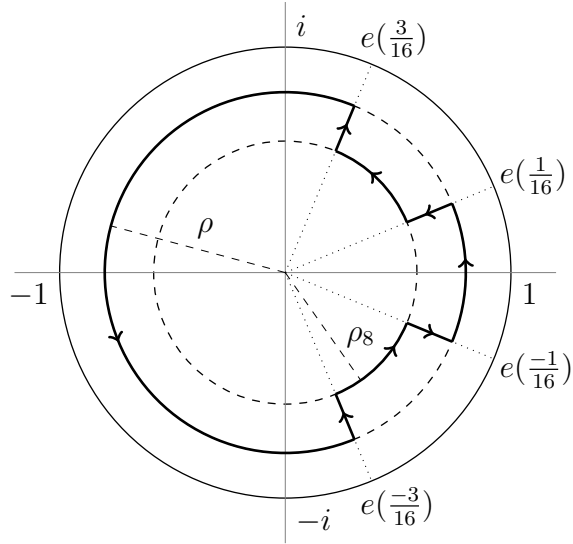
\begin{figure}[!ht]
\centering
\begin{tikzpicture}
    \def\gap{0.2}
    \def\tzR{2.4} 
    \def\tzr{1.75} 
    \def\tzudr{3} 
    
    \draw[line width = 0.5pt] (0,0) circle (\tzudr);
    \draw[dashed, line width = 0.5pt] (0,0) circle (\tzr);
    \draw[dashed, line width = 0.5pt] (0,0) circle (\tzR);
    
    \draw [help lines,-] (-1.2*\tzudr, 0)--(1.2*\tzudr, 0);
    \draw [help lines,-] (0, -1.2*\tzudr)--(0, 1.2*\tzudr);
    
    
    \draw[line width=1pt,
        decoration = {markings, mark = at position 0.6 with {\arrow[line width=1.2pt]{>}}},
        postaction = {decorate}] 
        (-22.5:\tzR) arc (-22.5:22.5:\tzR);

    \draw[line width=1pt, 
        decoration = {markings, mark = at position 0.6 with {\arrow[line width=1.2pt]{>}}}, postaction = {decorate}] 
        (22.5:\tzR) -- (22.5:\tzr);

    \draw[line width=1pt,
        decoration = {markings, mark = at position 0.6 with {\arrow[line width=1.2pt]{>}}},
        postaction = {decorate}] 
        (22.5:\tzr) arc (22.5:67.5:\tzr);

    \draw[line width=1pt, 
        decoration = {markings, mark = at position 0.6 with {\arrow[line width=1.2pt]{>}}}, postaction = {decorate}] 
        (67.5:\tzr) -- (67.5:\tzR);

    \draw[line width=1pt,
        decoration = {markings, mark = at position 0.6 with {\arrow[line width=1.2pt]{>}}},
        postaction = {decorate}] 
        (67.5:\tzR) arc (67.5:292.5:\tzR);
    
    \draw[line width=1pt, 
        decoration = {markings, mark = at position 0.6 with {\arrow[line width=1.2pt]{>}}}, postaction = {decorate}] 
        (-67.5:\tzR) -- (-67.5:\tzr);
    
    \draw[line width=1pt,
        decoration = {markings, mark = at position 0.75 with {\arrow[line width=1.2pt]{>}}},
        postaction = {decorate}] 
        (-67.5:\tzr) arc (-67.5:-22.5:\tzr);
    
    \draw[line width=1pt, 
        decoration = {markings, mark = at position 0.6 with {\arrow[line width=1.2pt]{>}}}, postaction = {decorate}] 
        (-22.5:\tzr) -- (-22.5:\tzR);
    
    \draw[very thin, dashed] (0,0)--(165:\tzR);
    \node[right] at (156:0.6*\tzR) {$\rho$};
    
    \draw[very thin, dashed] (0,0)--(305:\tzr);
    \node[right] at (308:0.62*\tzr) {$\rho_8$};
    
    \draw[dotted] (0,0)--(22.5:\tzudr);
    \node[right] at (25.5:\tzudr) {$e(\tf{1}{16})$};

    \draw[dotted] (0,0)--(67.5:\tzudr);
    \node[above right] at (72.5:0.95*\tzudr) {$e(\tf{3}{16})$};

    \draw[dotted] (0,0)--(-22.5:\tzudr);
    \node[right] at (-25.5:\tzudr) {$e(\tf{-1}{16})$};

    \draw[dotted] (0,0)--(-67.5:\tzudr);
    \node[below right] at (-72.5:0.95*\tzudr) {$e(\tf{-3}{16})$};
    
    \node[below right] at (\tzudr,0) {$1$};
    \node[below left] at (-\tzudr,0) {$-1$};
    \node[above right] at (0, \tzudr) {$i$};
    \node[below right] at (0, -\tzudr) {$-i$};

\end{tikzpicture}
\caption{The contour of integration used for computing $\fp(n,\chi_2)$.}
\label{fig:KronContour}
\end{figure}
}

The remaining derivations and estimates used to establish the formulae of section \ref{sec:MainInts} and \ref{sec:Asymptotics} may again be applied to derive similar results for $\fp(n,\xc_2)$. Because our principal arcs are now those about $\xa=\tf18$ and $\xa=\tf78$ (alternatively about $\xa=\pm\tf18$), our main asymptotic term for $\fp(n,\xc_2)$ is
    \<
    \label{eq:2:P18Term}
    \begin{aligned}
        & \ptfr{11}{8}^{\fr14}(4\pi)^{-\fr12}\fc^{\fr14}n^{-\fr34} \exp\!\Big(2\rt{\tfr{11}{8}\fc n}\,\Big) \lf[e\Big(\fr{-n}{8}\Big)e^{3\pi i/16} + e\pfr{n}{8}e^{-3\pi i/16}\rh] \\
        &\qquad = \fa_2 n^{-\fr34} \exp\!\Big(\tf12\xk\rt{\tf{11}{16}n}\,\Big) \cos\!\Big(\fr{2\pi n}{8}-\fr{3\pi}{16}\Big),
    \end{aligned}
    \>
where again $\xk=\pi\rt{2/3}$, and now
    \[
        \fa_2 = 2(\tf{11}{8})^{\fr14}(4\pi)^{-\fr12}\fc^{\fr14} = \ptfr{11}{384}^{\fr14}.
    \]

In light of \eqref{eq:2:Psipm1}, the principal arcs $\fP$ and $\fP_*$ contribute terms
    \<
        \label{eq:2:Ppm1Term}
        (4\pi)^{-\fr12}\fc^{\fr14}e^{\xL_1}n^{-\fr34}\exp\!\Big(\tfr12\xk\rt{\tfr12n}\,\Big) \quad\text{and}\quad
        (-1)^n(4\pi)^{-\fr12}\fc^{\fr14}e^{\xL_2}n^{-\fr34}\exp\!\Big(\tfr12\xk\rt{\tfr12n}\,\Big),
    \>
respectively, where, breaking with our previous notation just this once, we simply let
    \[
        \xL_1 = -\tf14\log{2} + \tf12\log(1+\rt2) \qquad\text{and}\qquad \xL_2 = -\tf14\log{2} - \tf12\log(1+\rt2).
    \]
As $\exp(\xL_2-\xL_1) = (1+\rt{2})^{-1}$, we sum up the quantities \eqref{eq:2:P18Term} and \eqref{eq:2:Ppm1Term} and collect constants to conclude that
    \[
        \fp(n,\xc_2) = \fa_2 n^{-\fr34} \exp\!\Big(\tf12\xk\rt{\tf{11}{16}n}\,\Big) \fS(n),
    \]
where
    \[
        \fS(n) = \cos\!\Big(\fr{2\pi n}{8}-\fr{3\pi}{16}\Big)
        + \fb_2 \lf[1 + \fr{(-1)^n}{1+\rt2}\rh] \exp\!\Big(\!\!-\!\tfr12\xk\rt{\tf12n}\Big(\rt{\tfr{11}{8}}-1\Big)\!\Big),
    \]
    \[
        \xk=\pi\rt{\tf23}, \quad \fa_2 = \lf(\fr{11}{384}\rh)^{1/4}, \quad\text{and}\quad \fb_2 = \rt{\fr{1+\rt2}{2\rt{11}}}.
    \]
This completes the proof of Theorem \ref{thm:Kron}.

\appendix

\section{The computations for Proposition \ref{prop:Major}}
\label{sec:App1}
Here in the first part of the appendix we include the computations used in establishing Proposition \ref{prop:Major}; paraphrasing, we recall Proposition \ref{prop:Major} states that 
    \[
        \xY(\xr e(\tf{a}{q}+\xb),\xc_p) = \fr{V_q(a)\fc X}{1-2\pi iX\xb} + O(X^{\fr89}) \qquad (\text{for $(a,q)=1$}),
    \]
where 
    \[
        \fc = \fr{\pi^2}{24}\lf(1-\fr{1}{p}\rh), \qquad V_q(a) = \fr{1}{\fc} \sum_{k=1}^\infty \fr{U_k(q,a)}{k^2},
    \]
and
    \<
        \label{eq:UkRecall}
        U_k(q,a) = \fr{1}{[p,q]}\sum_{m=1}^{[p,q]} \xc^k(m) e\lf(\fr{mka}{q}\rh) \qquad ([p,q]=\mathrm{lcm}(p,q)).
    \>
As mentioned in section \ref{sec:major}, the specific value of $V_q(a)$ depends on divisibility properties of $q$, and thus we consider four different cases regarding $q$.

\begin{lemma}
    If $(p,q) = (a,q) = 1$, then
        \[
            V_q(a) = \begin{cases}
                1/q^2 & 2\nmid q,\\
                4/q^2 & 2\mid q.
            \end{cases}
        \]
\end{lemma}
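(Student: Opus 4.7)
The plan is to exploit the coprimality $(p,q)=1$ via the Chinese Remainder Theorem to factor the sum defining $U_k(q,a)$ into a product of a $p$-part and a $q$-part. Indeed, $[p,q]=pq$, and the function $\chi^k(m)$ depends only on $m \bmod p$ while $e(mka/q)$ depends only on $m \bmod q$. Thus, writing $m$ via CRT,
\[
U_k(q,a) = \frac{1}{pq} \bigg(\sum_{m_1=1}^p \chi^k(m_1)\bigg)\bigg(\sum_{m_2=1}^q e\Big(\tfrac{m_2 ka}{q}\Big)\bigg).
\]

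Next I would evaluate each factor. The $p$-part is $p-1$ when $k$ is even (since $\chi^k = \chi_0 \pmod p$) and $0$ when $k$ is odd (by orthogonality, since $\chi$ is nonprincipal). The $q$-part is a geometric sum equal to $q$ when $q \mid ka$ and $0$ otherwise; since $(a,q)=1$, this condition reduces to $q\mid k$. Therefore $U_k(q,a)$ vanishes unless $k$ is even \emph{and} $q\mid k$, in which case $U_k(q,a) = (p-1)/p$.

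Substituting into the definition of $V_q(a)$, the sum collapses to
\[
V_q(a) = \frac{1}{𝒄} \cdot \frac{p-1}{p} \sum_{\substack{k\ge 1 \\ 2\mid k,\; q\mid k}} \frac{1}{k^2}.
\]
The divisibility conditions are equivalent to $\mathrm{lcm}(2,q)\mid k$, so the inner series equals $\zeta(2)/\mathrm{lcm}(2,q)^2 = \pi^2/(6\,\mathrm{lcm}(2,q)^2)$. Splitting on the parity of $q$: when $2\nmid q$ we have $\mathrm{lcm}(2,q)=2q$, and when $2\mid q$ we have $\mathrm{lcm}(2,q)=q$. Recalling that $𝒄 = \frac{\pi^2}{24}(1-\tfrac{1}{p}) = \frac{\pi^2(p-1)}{24p}$, the prefactors cancel cleanly and yield $V_q(a) = 1/q^2$ in the odd case and $V_q(a) = 4/q^2$ in the even case, as claimed.

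There is no real obstacle here: the CRT factorization is the only slightly nontrivial observation, after which everything reduces to character orthogonality mod $p$, a geometric sum mod $q$, and an elementary evaluation of $\zeta(2)$ restricted to a single arithmetic progression.
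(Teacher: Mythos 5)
Your proof is correct and follows essentially the same route as the paper's. The paper parametrizes residues mod $pq$ as $m = tp + s$ with $1\le t\le q$, $1\le s\le p$, and then observes that when $q\mid k$ the cross term $e(ska/q)$ collapses to $1$; your direct CRT factorization of $U_k(q,a)$ into a $p$-part and a $q$-part is a slightly cleaner way of reaching the same expression, since it avoids the intermediate cross term entirely, but the evaluation of the two factors (orthogonality mod $p$, geometric sum mod $q$) and the final $\zeta(2)$-on-a-progression calculation are identical to the paper's.
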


\begin{proof}
    First, by assumption we have $[p,q]=pq$. Writing $m=tp+s$ with $1 \leq t \leq q$ and $1 \leq s \leq p$ in \eqref{eq:UkRecall}, we find at once that
        \<
            \label{eq:NewEq2}
			pq U_k(q,a) = \sum_{s=1}^p \sum_{t=1}^q e\pf{(tp+s)ka}{q}\xc^k(tp+s) = \sum_{s=1}^p \xc^k(s) e\pf{ska}{q} \sum_{t=1}^q e\pfr{tkpa}{q},
		\>
    and it follows at once that
        \<
            \label{eq:NewEq3}
            U_k(q,a) = \begin{cases} (1-\tf1p) & \text{$2\mid k$ and $q\mid k$,} \\
            0 & \text{otherwise.}
            \end{cases}
        \>
    Now since $U_k(q,a) = 0$ unless $q\mid k$, we write $k=qk_1$ for such $k$ to find that
		\[
			\sum_{k=1}^\infty \fr{U_k(q,a)}{k^2} = \left(1-\fr{1}{p}\right) \sum_{\substack{k_1=1 \\ 2 \ssmid qk_1}}^\infty \fr{1}{q^2k_1^2},
		\]
    and the result follows immediately from simply conditioning on the parity of $q$. 
\end{proof}

We now suppose that $p\mid q$, recalling that $G(\xy)=\sum_{n=1}^q \bar{\xy}(n) e(n/q)$ for a general character $\xy$ (mod $q$).

\begin{lemma}
	\label{lem:UkDiv}
	Suppose that $(a,q)=1$ and that $p\mid q$, say $q=pq_1$. First, one has
        \[
            U_k(q,a) = 0 \qquad \text{if $q_1\nmid k$.}
        \]
    Now supposing that $k=q_1k_1$ for some $k_1\in\nn$, one has
		\<
			\label{eq:UkDiv}
			U_k(q,a) = \begin{cases} 
            1-1/p & \text{$2\mid k$ and $p\mid k_1$,} \\
            - 1/p & \text{$2\mid k$ and $p\nmid k_1$,} \\
            G(\xc)\xc(ak_1)/p & \text{$2\nmid k$.}
            \end{cases}
		\>
\end{lemma}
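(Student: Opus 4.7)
The plan is to evaluate $U_k(q,a)$ directly by splitting the sum on $m$ according to residue classes modulo $p$, exploiting the fact that $\chi^k$ has period $p$ while the additive character $e(mka/q)$ has a larger period.

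Since $p \mid q$, we have $[p,q] = q = pq_1$, so that
\[
    pq_1\, U_k(q,a) = \sum_{m=1}^{pq_1} \chi^k(m)\, e\!\left(\fr{mka}{pq_1}\right).
\]
First I would parametrize $m = s + jp$ with $1 \le s \le p$ and $0 \le j \le q_1 - 1$. Since $\chi^k(m)=\chi^k(s)$ the double sum factors as
\[
    pq_1\, U_k(q,a) = \sum_{s=1}^{p} \chi^k(s)\, e\!\left(\fr{ska}{pq_1}\right) \sum_{j=0}^{q_1-1} e\!\left(\fr{jka}{q_1}\right).
\]
The inner geometric sum over $j$ equals $q_1$ when $q_1 \mid ka$ and vanishes otherwise; since $(a,q)=1$ implies $(a,q_1)=1$, this condition reduces to $q_1 \mid k$. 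This establishes the first assertion of the lemma and, writing $k = q_1 k_1$, reduces us to
\[
    U_k(q,a) = \fr{1}{p} \sum_{s=1}^{p} \chi^k(s)\, e\!\left(\fr{s k_1 a}{p}\right).
\]

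Now I would split on the parity of $k$. When $2 \mid k$ we have $\chi^k = \chi_0$, so the sum becomes a Ramanujan sum $c_p(k_1 a) = \sum_{(s,p)=1} e(s k_1 a/p)$. When $p \mid k_1$ each term is $1$, yielding $(p-1)/p = 1 - 1/p$; when $p \nmid k_1$, the full sum over $1 \le s \le p$ of $e(s k_1 a/p)$ vanishes, leaving $-e(0) = -1$ and hence $-1/p$. When $2 \nmid k$ we have $\chi^k = \chi$, so
\[
    p\, U_k(q,a) = \sum_{s=1}^{p} \chi(s)\, e\!\left(\fr{s k_1 a}{p}\right).
\]
If $p \mid k_1$ this is $\sum_s \chi(s) = 0$, which agrees with the stated formula since $\chi(k_1) = 0$ in that case. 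If $p \nmid k_1$, set $c \equiv k_1 a \mmod{p}$ and substitute $s \mapsto c^{-1}s$; the standard twisted-Gauss-sum identity gives $\sum_s \chi(s) e(sc/p) = \bar\chi(c) G(\chi)$, and since $\chi = \chi_p$ is real this equals $\chi(a)\chi(k_1) G(\chi)$, as claimed.

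This is essentially a bookkeeping exercise; the only mild subtlety is recognizing that the inner $j$-sum is the mechanism forcing $q_1 \mid k$, and verifying that the odd-$k$ formula $G(\chi)\chi(ak_1)/p$ correctly absorbs the case $p \mid k_1$ via the convention $\chi(k_1)=0$. No step is a genuine obstacle.
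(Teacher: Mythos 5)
Your proposal is correct and follows essentially the same route as the paper's proof: split $m$ into residue classes modulo $p$, evaluate the geometric sum over the complementary variable to force $q_1\mid k$, reduce to $\tfrac1p\sum_{s=1}^p \chi^k(s)e(sk_1a/p)$, and evaluate by parity of $k$ via the Ramanujan sum and the twisted Gauss sum identity. Your separate check of the case $p\mid k_1$ for odd $k$ is a harmless (slightly more careful) addition; the paper handles it implicitly through the identity $\sum_s\chi(s)e(sn/p)=\bar\chi(n)G(\chi)$ valid for primitive $\chi$.
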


\begin{proof}
    Now with $[p,q]=q$, similar to equation \eqref{eq:NewEq2} we find at once that
		\[
			q U_k(q,a) = \sum_{t=1}^{q_1} \sum_{s=1}^p \xc^k(tp+s) e\pfr{(tp+s)ka}{q} 
			= \sum_{s=1}^{p} \xc^k(s) \pfr{ska}{q} \sum_{t=1}^{q_1} e\pfr{tka}{q_1},
		\]
    so that clearly $U_k(q,a) = 0$ if $q_1 \nmid k$. Thus, suppose that $q_1 \mid k$, say $k = q_1k_1$.
    Noting that $k/q = k_1/p$, it follows that
		\<
			\label{eq:UkSumDiv2}
			U_k(q,a) = \fr1p \sum_{s=1}^p \xc^k(s) e\pfr{sk_1a}{p}.
		\>
	First, if $2\mid k$ then $\xc^k = \xc_0 \mmod{p}$ and the righthand side of \eqref{eq:UkSumDiv2} is
		\[
			\fr1p \sum_{s=1}^p \xc_0(s) e\pfr{sk_1a}{p} \begin{cases}
            1-1/p & p\mid k_1, \\
            -1/p & p\nmid k_1,
            \end{cases}
		\]
    which gives the first two cases of \eqref{eq:UkDiv}. If $2\nmid k$ then $\xc^k=\xc$, and the righthand side of \eqref{eq:UkSumDiv2} is equal to
		\[
			\fr1p \sum_{s=1}^p \xc(s) e\lf(\fr{sk_1a}{p}\rh) = \fr{G(\xc)\bar{\xc}(ak_1)}{p} = \fr{G(\xc)\xc(ak_1)}{p},
		\]
	establishing the final case of \eqref{eq:UkDiv}.
\end{proof}

\begin{lemma}
	Suppose that $p\mid q$ and $(a,q)=1$. If $2\nmid q$, then
        \[
            V_q(a) =\lf[\xc(a)G(\xc)(1-\tfr{\xc(2)}{4})L_2(\xc)/\fc - 1 \rh] \fr{p}{q^2},
        \]
    where $\fc = \tf{\pi^2}{24}(1-\tf{1}{p})$ and $L_2(\chi)=L(2,\chi)$.
\end{lemma}

\begin{proof}
	Let $q = pq_1$ for some $q_1\in\nn$. By Lemma \ref{lem:UkDiv}, $U_k(q,a)$ is nonzero only if $q_1\mid k$, so for such $k$ let $k = q_1k_1$. When $2\mid k$, since $2\nmid q$ one has $2\mid k$ if and only if $2\mid k_1$, so that
		\<
        \label{eq:2pEven}
			\sum_{2\ssmid k}^\infty \fr{U_k(q,a)}{k^2}
            = \sum_{2p \ssmid k_1}^\infty \fr{1}{(q_1k_1)^2} - \sum_{2\ssmid k_1}^\infty \fr{1/p}{(q_1k_1)^2} 
            = \fr{\pi^2}{24q^2}(1-p) = -\fr{\fc p}{q^2}.
		\>
	Now considering $\sum_{2\ssnmid k} U_k(q,a)/k^2$, by \eqref{eq:UkDiv} and the fact that $2\nmid k$ if and only if $2\nmid k_1$, we have
		\[
			\fr{U_k(q,a)}{k^2} = \fr{G(\xc)\xc(ak_1)}{p(q_1k_1)^2} = \fr{\xc(a)G(\xc)p}{q^2}\cdot\fr{\xc(k_1)}{k_1^2} \qquad (\text{for $2 \nmid k$}),
		\]
	so that
		\<
            \label{eq:2pOdd}
			\sum_{2 \ssnmid k}^\infty \fr{U_k(q,a)}{k^2}
			= \xc(a)G(\xc)\fr{p}{q^2} \sum_{2\ssnmid k_1}^\infty \fr{\xc(k_1)}{k_1^2} = \xc(a)G(\xc)(1-\tf{\xc(2)}{4})L_2(\xc)\cdot\fr{p}{q^2}.
		\>
    The result then follows from adding \eqref{eq:2pEven} and \eqref{eq:2pOdd} and dividing by $\fc$.
\end{proof}

Finally, using similar arguments to those above, we readily verify that
    \[
        V_q(a) = -\nfrac{4p}{q^2} \qquad \text{if $2p \mid q$},
    \]
which completes the verification of the formulae for $V_q(a)$ in Proposition \ref{prop:Major}.

Because many of our formulae involve the quantities $L_1(\xc_p)$, we conclude by including some formulae from \cite{montgomery2007multiplicative} that are useful for explicit computations. As usual, we write $L_r(\chi)=L(r,\chi)$ for $r = 0,1,2$.

\begin{lemma}[\cite{montgomery2007multiplicative}*{Thm.\ 9.9, Exer.\ 10.1.14, Exer.\ 10.1.15}]
    \label{lem:MV10.1.14}
    Suppose that $\xy$ is a primitive character modulo $q>1$ that $\xy(-1)=1$. One has $L_0(\xy)=0$,
        \begin{align*}
            L_1(\xy) &= - \fr{G(\xy)}{q} \sum_{a=1}^{q-1} \bar{\xy}(a) \log\sin\!\Big(\fr{\pi a}{q}\Big), \\
            L_2(\xy) &= \fr{\pi^2G(\xy)}{q} \sum_{a=1}^q \bar{\xy}(a)\lf(\pfr{a}{q}^2 - \fr{a}{q} + \fr{1}{6} \rh).
        \end{align*}
    If one has $\xy(-1)=-1$, then
        \begin{align*}
            L_0(\xy) = \fr{-1}{q} \sum_{a=1}^{q} a\xy(a) \qquad\text{and}\qquad L_1(\xy) = \fr{i\pi G(\xy)}{q^2} \sum_{a=1}^q a \bar{\xy}(a).
        \end{align*}
\end{lemma}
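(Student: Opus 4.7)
The plan is to derive each formula by combining the Hurwitz zeta decomposition, Fourier inversion for primitive characters, and the functional equation of $L(s,\psi)$. The starting point is the identity
\[
    L(s,\psi) = q^{-s} \sum_{a=1}^{q-1} \psi(a)\, \zeta(s, a/q),
\]
valid for $\sigma>1$ by grouping the Dirichlet series according to residue classes modulo $q$, and extending to all $s$ by analytic continuation of the Hurwitz zeta. Using $\zeta(0,x) = \tfrac12 - x$ together with $\sum_{a} \psi(a) = 0$ for nonprincipal $\psi$, one reads off
\[
    L_0(\psi) = -\frac{1}{q} \sum_{a=1}^{q-1} a\, \psi(a).
\]
For even $\psi$ the substitution $a \mapsto q-a$ immediately gives $L_0(\psi) = 0$; for odd $\psi$ this is the stated formula.

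For the $L_1$ formulas I would invoke the Fourier expansion of a primitive character,
\[
    \psi(n) = \frac{1}{G(\bar\psi)} \sum_{a=1}^{q-1} \bar\psi(a)\, e(na/q),
\]
substitute into $L(s,\psi) = \sum_n \psi(n) n^{-s}$ for $\sigma>1$, and interchange the order of summation. At $s=1$ the inner polylogarithm equals $-\log(1 - e(a/q))$, and the decomposition $1 - e(a/q) = -2i\sin(\pi a/q)\, e(a/(2q))$ splits this into four pieces. The constant and $-i\pi/2$ pieces drop out since $\sum \bar\psi(a) = 0$. Parity then takes over: for even $\bar\psi$ the linear-in-$a$ term cancels by the $a \leftrightarrow q-a$ symmetry already used for $L_0$, leaving only the $\log\sin$ contribution; for odd $\bar\psi$ the $\log\sin$ term cancels instead, leaving only the linear term. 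Finally, the identity $G(\psi)G(\bar\psi) = \psi(-1)q$ converts the leading $1/G(\bar\psi)$ into the $G(\psi)/q$ (respectively $iG(\psi)/q$) factor appearing in the statement.

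The $L_2$ formula requires pushing beyond the convergent half-plane, so I would instead appeal to the functional equation
\[
    L(s,\psi) = \frac{G(\psi)}{i^\nu \pi}\left(\frac{2\pi}{q}\right)^s L(1-s,\bar\psi)\, \Gamma(1-s)\sin\tfrac{\pi}{2}(s+\nu),
\]
with $\nu = 0$ since $\psi$ is even, evaluated at $s=2$. The value of $L(-1,\bar\psi)$ needed on the right-hand side comes directly from the Hurwitz decomposition together with $\zeta(-1,x) = -\tfrac12 B_2(x) = -\tfrac12(x^2 - x + 1/6)$, giving
\[
    L(-1,\bar\psi) = -\frac{q}{2}\sum_{a=1}^{q-1} \bar\psi(a)\!\left(\!(a/q)^2 - a/q + \tfrac{1}{6}\!\right).
\]
Substituting, carefully evaluating $\Gamma(-1)\sin\pi$ as a limit (the apparent $0\cdot\infty$ is $-\pi/2$), and again using $G(\psi)G(\bar\psi)=q$ yields the claimed closed form.

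The main obstacle is bookkeeping rather than conceptual difficulty: one must keep track of whether each sum is taken against $\psi$ or $\bar\psi$ and reconcile this with the author's notation $\psi^*$, which coincides with $\psi$ when $\psi$ is real (the only case used in the main body of the paper via $\chi_p$ and $\chi_2$). Secondary care is needed for the gamma/sine factor in the $L_2$ derivation, where the apparent indeterminacy at $s=2$ must be resolved by a careful limiting argument; once that is done the computation is mechanical.
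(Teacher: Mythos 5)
Your derivation is correct: the Hurwitz-zeta evaluation at $s=0$ and $s=-1$, the Fourier expansion of a primitive character for $L(1,\psi)$ with the parity cancellations, and the functional-equation limit $\Gamma(1-s)\sin(\tfrac{\pi s}{2})\to-\tfrac{\pi}{2}$ at $s=2$ all check out, and the constants agree with the stated formulas (with $\psi^*=\bar{\psi}$, which is immaterial for the real characters used here). The paper itself offers no proof — it quotes this lemma from Montgomery and Vaughan — and your argument is essentially the standard one given in that reference, so there is nothing to reconcile.
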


\bibliography{legendreBib.bib}
\end{document}